\documentclass[a4paper,twoside]{article}
\usepackage{makeidx}
\usepackage{amssymb,amsfonts,amsmath,amsbsy,theorem,amscd}
\usepackage{dina42e}
\usepackage{epsfig,psfrag,graphicx,xcolor}
\usepackage{stmaryrd}
\usepackage{mathrsfs}
\usepackage{esint}
\usepackage{mathtools}
\usepackage{doi}
\definecolor{CMUrot}{RGB}{128,18,18}
\definecolor{Gold}{RGB}{238,180,34}
\allowdisplaybreaks[4]

\newcommand{\ol}[1]{\overline{#1}}

\numberwithin{equation}{section}

\newcommand{\R}{\ensuremath{\mathbb{R}}}

\newcommand{\Om}{\ensuremath{\Omega}}

\newcommand{\N}{\ensuremath{\mathbb{N}}}

\newcommand{\Z}{\ensuremath{\mathbb{Z}}}

\newcommand{\SD}{\ensuremath{\mathcal{S}}}

\newcommand{\dist}{\operatorname{dist}}
\newcommand{\sdist}{\operatorname{sdist}}

\newcommand{\sd}{{\rm d}}
\newcommand{\supp}{\operatorname{supp}}
\newcommand{\eps}{\ensuremath{\varepsilon}}

\newcommand{\Div}{\operatorname{div}}

\newcommand{\T}{\ensuremath{\mathbb{T}}}

\newcommand{\no}{\mathbf{n}}

\newcommand{\tn}[1]{\mathbf{#1}}
\def\nn{\mathbf{n}}
\let\e=\varepsilon
\newcommand{\ve}{\mathbf{v}}
\newcommand{\we}{\mathbf{w}}
\newcommand{\ue}{\mathbf{u}}

\newcommand{\btau}{{\boldsymbol{\tau}}}
\newtheorem{thm}{THEOREM}[section]
\newtheorem{cor}[thm]{Corollary}
\newtheorem{lem}[thm]{Lemma}
\newtheorem{defn}[thm]{Definition}
\newtheorem{theorem}[thm]{Theorem}
\newtheorem{prop}[thm]{Proposition}

\newtheorem{claim*}{Claim}

\theorembodyfont{\rmfamily}
\newtheorem{rem}[thm]{Remark}

\newenvironment{proof*}[1]{{\bf Proof
#1:}}{\hspace*{\fill}\rule{1.2ex}{1.2ex}\\ }
\newenvironment{proof}{{\bf
Proof:\,}}{\hspace*{\fill}\rule{1.2ex}{1.2ex}\\ }

\newcommand{\p}{\partial}
\newcommand{\G}{\Gamma}
\def\({\left(}
\def\){\right)}

\newcommand{\Lgrad}{L^{\nabla}}
\newcommand{\Ldelta}{L^{\Delta}}
\newcommand{\Lt}{L^{t}}

\newcommand{\tc}{\hat{c}}
\newcommand{\tv}{\hat{\ve}}
\newcommand{\tp}{\hat{p}}
\newcommand{\tl}{\hat{\lambda}}
\newcommand{\tr}{\hat{r}}

\newcommand{\order}{N}
\newcommand{\zg}{\zeta_\Gamma}

\newcommand{\dif}{\ensuremath{\;\mathrm{d}}}

\newcommand{\NSt}[1]{#1}
\begin{document}
\begin{titlepage}
  \title{Sharp Interface Limit for a Mass-Conserving Navier-Stokes/Allen-Cahn System with Different Viscosities }
  \author{Helmut Abels and Hanifah Mumtaz}
\end{titlepage}

\maketitle
\abstract{
\noindent We perform a rigorous examination of the sharp interface limit of a coupled Navier-Stokes and mass-conserving Allen-Cahn system in a two-dimensional, bounded, and smooth domain as the parameter $\varepsilon > 0$, representing the thickness of the diffuse interface, tends to zero. We prove the convergence of solutions from the mass-conserving Navier-Stokes/Allen-Cahn system to those of its sharp interface limit. In this limit, the interface evolves according to mass-conserving mean curvature flow with a convection term and is coupled to a two-phase Navier-Stokes system with surface tension. Our approach entails the construction of an approximate solution for the limiting system through the use of matched asymptotic expansions, complemented by a special ansatz for the leading-order term. In order to estimate the error between this approximate solution and the exact solution, we employ a refined spectral estimate for the linearized Allen-Cahn operator near the approximate solution.

\bigskip

{\small\noindent
{\bf Mathematics Subject Classification (2000):}
Primary: 76T99; Secondary:
35Q30, %% Stokes and Navier-Stokes eq.
35Q35, %% Other equations arising in fluid mechanics
35R35,
76D05, %% Incompressible viscous fluids: Navier-Stokes equations
76D45\\ %% Incompressible viscous fluids: Capillarity (surface tension)
{\bf Key words:} Two-phase flow, diffuse interface model, sharp interface limit, mass-conserving Allen-Cahn equation, Navier-Stokes equation
}

\section{Introduction and Main Result}
The study of two-phase flows involving immiscible fluids represents a fundamental problem with broad applications across a range of scientific disciplines such as material sciences, biomedical and environmental engineering. These flows can be modeled using either a sharp interface or a diffuse interface approach. In sharp interface models, the boundary between the bulk domains is treated as a smooth surface of zero width, with distinct equations governing each fluid and boundary conditions at the interface. In contrast, diffuse interface models account for a finite-width transition layer (of thickness $\varepsilon>0$) described by an order parameter, typically related to fluid concentrations or volume fractions. This approach is advantageous for numerical simulations, as it can handle complex interfacial dynamics, including topological changes like pinch-off and reconnection. The sharp interface limit allows for a rigorous relation between these different kinds of models. 

In this paper we investigate the sharp interface limit $\eps\to 0$ of the following mass-conserving Navier-Stokes/Allen-Cahn system with different viscosities:{\renewcommand{\arraystretch}{0.95}  % Tighten rows slightly (<1.0 compacts)
\setlength{\jot}{12pt}  % Reduce vertical sep (default 10-12pt; try 6-8pt)
\begin{alignat}{2}\label{eq:NSAC1}
  \partial_t \ve_\eps +\ve_\eps\cdot \nabla \ve_\eps-\Div(2\nu(c_\eps)D\ve_\eps)  +\nabla p_\eps & = -\eps \Div (\nabla c_\eps \otimes \nabla c_\eps)                                             & \quad & \text{in}\ \Omega\times(0,T_0), \\\label{eq:NSAC2}
  \Div \ve_\eps                                                                                  & = 0                                                                                            & \quad & \text{in}\ \Omega\times(0,T_0), \\\label{eq:NSAC3}
  \partial_t c_\eps +\ve_\eps\cdot \nabla c_\eps -\Delta c_\eps                                  & =  \frac{1}{\eps^2}  \fint_\Omega f'(c_\varepsilon) \mathrm{d} x-\frac{1}{\eps^2} f'(c_\eps) & \quad & \text{in}\ \Omega\times(0,T_0),
\end{alignat}}
with the boundary and initial conditions
\begin{alignat}{2}\label{eq:NSAC4}
  (\ve_\eps,\partial_{\mathbf{n}}c_\eps)|_{\partial\Omega} & = (0,0)                     & \quad & \text{on }\partial\Omega\times (0,T_0), \\
  \label{eq:NSAC5}
  (\ve_\eps,c_\eps) |_{t=0}                                & = (\ve_{0,\eps},c_{0,\eps}) & \quad & \text{in }\Omega.
\end{alignat}
Throughout this contribution $\Omega \subseteq \R^2$ is a bounded domain with smooth boundary.
The velocity and the
pressure of the fluid mixture are denoted by $\ve_\eps,p_\eps$, respectively, $c_\eps$ is the order parameter, which is related to  the
concentration difference of the fluids, the tensor product $\nabla c_\eps \otimes \nabla c_\eps$ models the surface tension, $\nu(c_\eps)$ describes the viscosity depending on $c_\eps$, and $f$ is a suitable smooth double well potential, i.e., $f\in C^\infty(\R)$, $f'(\pm 1)=0$, $f''(\pm 1)>0$ and $f(s)=f(-s)$ for all $c\in \R$ (e.g., $f(c)=\frac{1}{8}(c^2-1)^2$). Moreover, we assume for simplicity that $|f'''(s)|\leq C(|s|+1)$ for all $s\in\R$ and some $C>0$, which holds true for the standard example as well. The mean value of a function is defined as
$$
\fint_\Omega f(x) \dif x \coloneqq \frac{1}{|\Omega|}\int_\Omega f(x) \dif x.
$$ Furthermore, $D\ve_\eps=\frac 12(\nabla\ve_\eps+(\nabla\ve_\eps)^T)$ is the symmetrized gradient of $\ve_\eps$ and we note that the derivatives $\nabla$= $\nabla_x$, $\Div$= $\Div_x$, $\Delta$= $\Delta_x$ are taken with respect to the spatial variable $x$ only. 
We redefine the mass term as
\begin{equation}
  \lambda_\varepsilon(t) \coloneqq \varepsilon^{-1}\fint_\Omega f(c_\varepsilon(\cdot, t)) \dif x.
\end{equation}
Then the equation \eqref{eq:NSAC3} can be written as
$$\partial_t c_\varepsilon + \mathbf{v}_\varepsilon\cdot \nabla c_\varepsilon = \Delta c_\varepsilon + \frac{1}{\varepsilon}\left(\lambda_\varepsilon(t)-\frac{1}{\varepsilon}f'(c_\varepsilon)\right).$$
We can examine the temporal evolution of the solution's mass for equations \eqref{eq:NSAC3}-\eqref{eq:NSAC4}: $$\partial_t \int_\Omega c_\varepsilon\dif x=\int_\Omega \left(\Delta c_\varepsilon +\varepsilon^{-2}\left[\fint_{\Omega}f^{\prime}\left(c_\varepsilon\right)\mathrm{d}x-f^{\prime}\left(c_\varepsilon\right)\right]-\mathbf{v}_\varepsilon\cdot\nabla c_\varepsilon\right)\dif{x}=0$$ by the divergence theorem and Green's identity. Hence we find that the integral $\int_\Omega c_\varepsilon \dif x$ remains unchanged over time, demonstrating mass conservation in the system.

The non-mass-conserving case (i.e. without $\fint_\Omega f'(c_\varepsilon) \dif{x}$) of this model was originally proposed by Liu and Shen in \cite{LiuShenModelH} as an alternative approximation to the classical sharp interface model for two-phase flows of viscous, incompressible, Newtonian fluids. Subsequently, Jiang et al.~\cite{TwoPhaseVariableDensityJiangEtAl} derived a more general version of the model that allows for fluids with different densities and phase transitions. In addition, the existence of weak solutions and the long-term behavior of the model were investigated. The long-term dynamics of the solutions has been further studied by Gal and Grasselli~\cite{GalGrasselliDCDS}. Recent analytical results on a mass-conserving Navier-Stokes/Allen-Cahn system, along with additional references, can be found in Giorgini et al.~\cite{GiorginiGrasselliWu}.

We will show that solutions of  \eqref{eq:NSAC1}-\eqref{eq:NSAC5} converge to solutions of the following limit system as $\eps\to 0$ (under suitable conditions): For $t\in (0,T_0)$, we have
{\renewcommand{\arraystretch}{0.95}  % Tighten rows slightly (<1.0 compacts)
    \setlength{\jot}{9pt}  % Reduce vertical sep (default 10-12pt; try 6-8pt)
    \begin{alignat}{2}
      \label{eq:Limit1}
      \p_t \ve_0^\pm+\ve_0^\pm\cdot\nabla\ve_0^\pm-\nu^\pm\Delta \ve_0^\pm  +\nabla p^\pm_0 & = 0                                                         & \qquad & \text{in }\Omega^\pm (t),               \\\label{eq:Limit2}
      \Div \ve_0^\pm                                                                        & = 0                                                         & \qquad & \text{in }\Omega^\pm (t) ,              \\\label{eq:Limit3}
      \llbracket 2\nu^\pm D\ve_0^\pm -p_0^\pm \tn{I}\rrbracket\no_{\Gamma_t}                & =- \sigma H_{\Gamma_t}\no_{\Gamma_t}                        &        & \text{on }\Gamma_t,                     \\ \label{eq:Limit4}
      \llbracket\ve_0^\pm \rrbracket                                                        & =0                                                          &        & \text{on }\Gamma_t,                     \\
      \label{eq:Limit5}
      V_{\Gamma_t} -\no_{\Gamma_t}\cdot \ve_0^\pm                                           & = H_{\Gamma_t}-\fint_{\Gamma_{t}} H_{\Gamma_{t}} \dif H^{1} &        & \text{on }\Gamma_t,                     \\
      \ve_0^-|_{\partial\Omega}                                                             & = 0                                                         &        & \text{on }\partial\Omega\times (0,T_0), \\
      \label{eq:Limit6}
      (\ve_0^\pm, \Gamma_t)|_{t=0}                                                          & = (\ve_{0,0}^\pm, \Gamma_0).
    \end{alignat}}
    Here $\nu^\pm=\nu(\pm 1)$ and
$\Omega$ is the disjoint union of two open smooth domains  $\Omega^+(t)$ and $\Omega^-(t)$. Moreover, $\Gamma_t$ smoothly evolving for every $t\in[0,T_0]$ with $\Gamma_t=\partial\Omega^+(t)$,  $\no_{\Gamma_t}$ is the interior normal of $\Gamma_t$ with respect to  $\Omega^+(t)$ and 
\begin{equation*}
  \llbracket u\rrbracket(p,t)= \lim_{h\to 0+} \left[u(p+\no_{\Gamma_t}(p)h)- u(p-\no_{\Gamma_t}(p)h)\right]
\end{equation*}
is the jump of a function $u\colon \Omega\times [0,T_0]\to \R^2$ at $\Gamma_t$ in direction of $\no_{\Gamma_t}$. As usual $H_{\Gamma_t}$ and $V_{\Gamma_t}$ denote the curvature and the normal velocity of $\Gamma_t$, respectively,  both with respect to $\no_{\Gamma_t}$. Finally, $\sigma= \int_{\R}\theta_0'(\rho)^2\sd \rho$, where $\theta_0$ is the optimal profile that is the unique solution of
\begin{alignat}{1}\label{eq:OptProfile1}
  -\theta_0''(\rho)+f'(\theta_0(\rho))   & =0\qquad \text{for all }\rho\in\R, \\\label{eq:OptProfile2}
  \lim_{\rho\to\pm \infty}\theta_0(\rho) & =\pm 1,\qquad \theta_0(0)=0.
\end{alignat}
This will play a central role in the leading order asymptotics as $\eps\to 0$. We note that $\sigma= \int_{-1}^1 \sqrt{2f(s)}\dif{s}$.

The strong well-posedness locally in time of the limit system \eqref{eq:Limit1}-\eqref{eq:Limit6} in the case without the ``volume-preservation term'' $\fint_{\Gamma_{t}} H_{\Gamma_{t}} \dif H^{1}$  was studied rigorously in \cite{AbelsMoserNSAC} by the first author and Moser. The result and its proof can be modified to case with volume preservation since the additional term is lower order. Without volume-preservation Liu et al. \cite{LiuSatoTonegawa2} provided a proof of the existence of weak solutions for a non-Newtonian variant of this system. In the paper \cite{GiorginiGrasselliWu}, Giorgini, Grasselli and Wu proved the well-posedness of the mass-conserving Navier-Stokes/Allen-Cahn system but with the Flory-Huggins potential. In another key contribution \cite{StokesAllenCahn}, Liu and the first author considered a quasi-stationary Stokes flow, assuming equal viscosities ($\nu^+ = \nu^-$).

Allen’s work in \cite{Allen1977} and \cite{Allen19791085} derived the Allen-Cahn equation by minimizing energy and provided physical arguments for deriving the mean curvature flow.
\iffalse In their seminal work, Gurtin et al. \cite{GurtinTwoPhase,}, Hohenberg and Halperin \cite{HohenbergHalperin} derived model H, which couples Cahn-Hilliard to Navier-Stokes to describe two-phase flow in viscous incompressible fluids. \cite{LeeLowengrub1} introduced the Hele-Shaw-Cahn-Hilliard system as a Boussinesq approximation of model H. Existence results for solutions to model H were demonstrated by Boyer et al. in \cite{ModelH, BoyerModelH}, while Fei et al. \cite{Fei, WangZhangHSCH} proved the existence of solutions for the Hele-Shaw-Cahn-Hilliard system. The existence of solutions for sharp interface limit systems derived from phase field models like model H was discussed in detail in \cite{NSMS, StrongNSMS}. Chen’s work on varifold solutions for sharp interface limits was instrumental in proving convergence in a weak sense over large times \cite{ChenSharpInterfaceLimit}.\fi
In another seminal result, De Mottoni and Schatzman \cite{DeMottoniSchatzman} showed convergence of solutions to the Allen-Cahn equation to mean curvature flow using well-prepared initial data and rigorous matched asymptotic expansions. 
Alternative approaches to sharp interface limits include viscosity solutions, as developed by Evans et al.\ and Kohn et al.\ in \cite{ESS, KKR}. Varifold solutions were also utilized in \cite{Ilmanen, MizunoTonegawa, Kagaya} to study these limits in various contexts related to phase field models. Laux and Simon presented conditional results on convergence using relative entropy methods within a $BV$-setting for Allen-Cahn equation locally in time for two- and more components in \cite{LauxSimon}. A first sharp interface limit result using the relative entropy method was given by Fischer, Laux and Simon in \cite{FischerLauxSimon}. This method was extended by Hensel and Liu in \cite{HenselLiuModelH}  to show the sharp interface limit for the Navier-Stokes/Allen-Cahn system with constant viscosity in two and three dimensions. Moreover, the relative entropy method was also extended to the mass-conserving Allen-Cahn equation by Kroemer and Laux \cite{KroemerLaux} to show convergence to volume-preserving mean curvature flow.
The sharp interface limit of a non-mass-conserving Navier-Stokes/Allen-Cahn system was studied by the first author and Fei in \cite{AbeFei23}, laying the foundation for the work in this paper. Another significant contribution is by Chen, Hilhorst, and Logak \cite{ChenHilhorstLogak}, in which the authors showed convergence to the case of the mass-conserving Allen-Cahn equation, focusing on the formal asymptotics of the leading-order terms and deriving the mass-conserving mean curvature flow. In \cite{AbelsConvectiveAC}, the sharp interface limit was derived by the first author using the method of formally matched asymptotic expansion for convective Allen-Cahn systems for leading orders.
These works collectively advances our understanding of phase field models, sharp interface limits, and their applications across various fluid dynamics systems.

Our main result on the sharp interface limit of the mass-preserving Navier-Stokes/Allen-Cahn system is the following:
\begin{thm}\label{thm:main}
  Let $N\geq 3$, $N\in\N$ and let us consider a smooth solution $(\ve_0^\pm,\Gamma)$ of the limit system \eqref{eq:Limit1}-\eqref{eq:Limit6} for some $T_0\in (0,\infty)$. Then, there are  $c_\mathrm{A}\colon \Om\times [0,T_0]\to \R$, $\ve_\mathrm{A}\colon \Om\times [0,T_0]\to \R^2$ (depending on $\eps$) such that the following holds true: If  $(\ve_\eps,c_\eps)$ are strong solutions of the diffuse system \eqref{eq:NSAC1}-\eqref{eq:NSAC5} with initial values $c_{0,\eps}\colon \Omega\to [-1,1]$, $\ve_{0,\eps}\colon \Omega\to \R^2$, $0<\eps\leq 1$ satisfying
          \begin{equation}\label{initial assumption}
            \|c_{0,\eps}-c_{\mathrm{A}}|_{t=0}\|_{L^2(\Omega)}+ \varepsilon^2\|\nabla(c_{0,\eps}-c_{\mathrm{A}}|_{t=0})\|_{L^2(\Omega)}+ \|\ve_{0,\eps}-\ve_{\mathrm{A}}|_{t=0}\|_{L^2(\Omega)}\leq C\eps^{\order+\frac12}
          \end{equation}
          for all $\eps\in (0,1]$ and some $C>0$, then there are some $\eps_0 \in (0,1]$, $T_1\in (0,T_0]$, $R>0$ such that 
          \begin{subequations}\label{assumptions'}
            \begin{align}
              \sup_{0\leq t\leq T_1} \|c_\eps(t) -c_\mathrm{A}(t)\|_{L^2(\Omega)}+\|\nabla (c_\eps -c_\mathrm{A})\|_{L^2(\Omega\times (0,T_1)\setminus\Gamma(\delta))}                          & \leq R\eps^{\order+\frac12}\label{eq:Error1}, \\
              \|\nabla_\btau(c_\eps -c_\mathrm{A})\|_{L^2(\Omega\times(0,T_1)\cap \Gamma(2\delta))}+ \eps \|\partial_\no(c_\eps -c_\mathrm{A})\|_{L^2(\Omega\times(0,T_1)\cap \Gamma(2\delta))} & \leq R\eps^{\order+\frac12}\label{eq:Error2}, \\
              \|\nabla(c_\eps -c_\mathrm{A})\|_{L^\infty(0,T_1;L^2(\Omega))}+\|\nabla^2(c_\eps -c_\mathrm{A})\|_{L^2(\Omega\times(0,T_1))}                                                      & \leq R\eps^{\order-\frac32}\label{eq:Error3}
            \end{align}
          \end{subequations}
          and
          \begin{equation}
            \label{eq:convVelocityb}
            \|\ve_\eps -\ve_\mathrm{A}\|_{L^\infty(0,T_0;L^2(\Om))}+ \|\ve_\eps -\ve_\mathrm{A}\|_{L^2(0,T_1;H^1(\Om))} \leq C(R)\eps^{N+\frac12} 
          \end{equation}
          hold true for all $\eps \in (0,\eps_0]$ and some $C(R)>0$. {Here $\Gamma(\delta), \Gamma(2\delta)$ are as in Section~\ref{subsec:Coordinates} below for some $\delta>0$ depending only on $\Gamma$.} Moreover,
          \begin{equation*}
            \lim_{\eps\to 0} c_\mathrm{A}= \pm 1 \qquad \text{uniformly on compact subsets of } \Omega^\pm= \bigcup_{t\in [0,T_1]} \Omega^\pm (t) \times \{t\}
          \end{equation*}
          and
          \begin{equation*}
            \ve_\mathrm{A}=\ve_0^\pm + O(\eps) \qquad \text{in }L^\infty(\Om\times (0,T_1))\text{ as }\eps\to 0.
          \end{equation*}
\end{thm}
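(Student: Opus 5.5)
The proof will follow the strategy of \cite{AbeFei23} for the non-mass-conserving case. It splits into two parts: the construction of an approximate solution $(c_\mathrm{A},\ve_\mathrm{A},p_\mathrm{A},\lambda_\mathrm{A})$ solving \eqref{eq:NSAC1}--\eqref{eq:NSAC3} up to small remainders, and an energy-type estimate for the difference with the exact solution, closed by a continuation argument.

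\emph{Step 1 (approximate solution).} Using the signed distance $d_\Gamma$ and the tangential coordinate from Section~\ref{subsec:Coordinates}, I make inner expansions in $\Gamma(2\delta)$ in the stretched variable $\rho=(d_\Gamma-\eps h_\eps)/\eps$, with $h_\eps=\sum_{k\ge1}\eps^{k-1}h_k$. For $c$ the leading term is $\theta_0(\rho)$, and I make the special ansatz of \cite{AbeFei23} for the leading velocity term so that the jump of $\nu(c)$ across the layer is compatible with \eqref{eq:Limit3}. The mass term is expanded as $\lambda_\mathrm{A}=\sum_k\eps^k\lambda_k(t)$. I also make outer expansions in $\Omega^\pm(t)$ and match them in an intermediate region. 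The hierarchy is as follows. At order $\eps^{-1}$ one obtains the profile equation \eqref{eq:OptProfile1}. At order $\eps^0$, the solvability condition (orthogonality to $\theta_0'$) gives \eqref{eq:Limit5}, with $\lambda_1$ producing the average $\fint_{\Gamma_t} H_{\Gamma_t}\,\dif H^1$. The higher orders give linearized two-phase Navier--Stokes/mass-conserving mean curvature flow problems for $(\ve_k,p_k,h_k,\lambda_{k+1})$. These are uniquely solvable because the linearization of \eqref{eq:Limit1}--\eqref{eq:Limit6} is well posed, as in \cite{AbelsMoserNSAC}, with the volume term being lower order. Each $\lambda_{k+1}$ is fixed by requiring $\int_\Omega c_\mathrm{A}\,\dif x=\int_\Omega c_{0,\eps}\,\dif x$ up to $O(\eps^{N+1})$. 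Gluing with cutoffs and truncating at order $N$ gives remainders
\[
\partial_t c_\mathrm{A}+\ve_\mathrm{A}\cdot\nabla c_\mathrm{A}-\Delta c_\mathrm{A}-\eps^{-1}\lambda_\mathrm{A}+\eps^{-2}f'(c_\mathrm{A})=r_\mathrm{A},
\]
where $r_\mathrm{A}$ is $O(\eps^{N+1})$ in $L^2$ (with a structured $O(\eps^{N})$ part of the form $\theta_0'(\rho)\times$(function of $(s,t)$) inside the layer, handled as in \cite{AbeFei23}). There is an analogous Navier--Stokes remainder $\mathbf{s}_\mathrm{A}$. The stated properties $c_\mathrm{A}\to\pm1$ and $\ve_\mathrm{A}=\ve_0^\pm+O(\eps)$ follow directly from the construction.

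\emph{Step 2 (error estimates).} Set $u=c_\eps-c_\mathrm{A}$ and $\we=\ve_\eps-\ve_\mathrm{A}$. Testing the $u$-equation with $u$ produces the linearized Allen--Cahn form $\int|\nabla u|^2+\eps^{-2}f''(c_\mathrm{A})u^2$. I bound it from below using the refined spectral estimate: it controls $\|\nabla_\btau u\|^2+\eps^2\|\partial_\no u\|^2$ in $\Gamma(2\delta)$, but only up to $-C\|u\|^2$, and only on the orthogonal complement of the approximate kernel $\theta_0'(\rho)\cdot(\cdots)$. The nonlocal term $\eps^{-2}\fint_\Omega(f'(c_\eps)-f'(c_\mathrm{A}))\,\dif x$ tested with $u$ equals its mean times $\int u$. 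Since $\int_\Omega u\,\dif x=O(\eps^{N+1})$ by mass conservation and Step 1, this term is harmless up to higher-order nonlinear contributions. The nonlinearity $\eps^{-2}\int \tilde f(c_\mathrm{A},u)u^2$ is controlled by $\eps^{-2}\|u\|_{L^3}^3$ via the two-dimensional Gagliardo--Nirenberg inequality in the tubular coordinates; the bootstrap assumption \eqref{eq:Error3} then gives smallness for $N\ge3$. The coupling terms are $\we\cdot\nabla c_\mathrm{A}\,u$ and, in the momentum equation, $\eps\Div(\nabla c\otimes\nabla c)$ tested with $\we$. They are the delicate ones, since $\nabla c_\mathrm{A}\sim\eps^{-1}$. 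Following \cite{AbeFei23}, I split $\we$ into its trace on $\Gamma$ plus a remainder and use the anisotropic control of $u$. For $\we$ I use Korn's inequality and the lower bound $\nu\ge\nu_0>0$, absorbing $\Div(2(\nu(c_\eps)-\nu(c_\mathrm{A}))D\ve_\mathrm{A})$ via $\|u\|_{L^2}$ with $\|D\ve_\mathrm{A}\|_\infty\lesssim\eps^{-1}$; this costs a factor $\eps^{-1}$, which is compensated by the extra order.

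\emph{Step 3 (continuation).} Let $T_\eps$ be the maximal time up to which \eqref{eq:Error1}--\eqref{eq:Error3} hold with $R$ replaced by $2R$. Gronwall's inequality on $[0,T_\eps]$ together with \eqref{initial assumption} yields the estimates with constant $R/2$ for $\eps\le\eps_0$, so $T_\eps\ge T_1$. The $H^2$-bound \eqref{eq:Error3} comes from testing with $-\Delta u$, which loses $\eps^{-2}$ from the potential term. I expect the main obstacle to be Step 2: the interplay between the spectral estimate (which holds only modulo the approximate kernel direction) and the nonlocal mass term and the velocity coupling with variable viscosity. The first thing to try there is to decompose $u=\eps^{-1/2}Z(s,t)\theta_0'(\rho)+u^\perp$ and show that $Z$ is controlled through the mass constraint and the tangential gradient estimate.
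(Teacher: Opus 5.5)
Your overall architecture (matched expansions, the spectral estimate of Theorem~\ref{thm:Spectral}, Gronwall, continuation in $T_\eps$) matches the paper. However, the proposal misses the one idea that makes the velocity--phase coupling close.

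In your Step~1 the approximate solution is built from the limit solution alone. In Step~2 you split $\tilde{\we}=\ve_\eps-\ve_\mathrm{A}$ into its trace on $\Gamma$ plus a remainder, but you never say what happens to the trace part, and within your energy argument it cannot be estimated. In the layer $\nabla c_\mathrm{A}\approx\eps^{-1}\theta_0'(\rho)\no$. Writing $u=\eps^{-\frac12}Z\theta_0'(\rho)+v$, the term $\int_\Omega \tilde{\we}|_\Gamma\cdot\nabla c_\mathrm{A}\,u\,\dif x$ equals, to leading order, $\eps^{-\frac12}\sigma\int_{\T^1}Z\,\no\cdot\tilde{\we}|_\Gamma\,\dif s$. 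With $\|Z\|_{L^2(0,T;L^2(\T^1))}\lesssim R\eps^{N+\frac12}$ and $\|\tilde{\we}\|_{L^2(0,T;H^1(\Omega))}\lesssim C(R)\eps^{N+\frac12}$, this is only $O(C(R)R\,\eps^{2N+\frac12})$. That is half a power short of the $\theta^2R^2\eps^{2N+1}$ needed to close the bootstrap, and a factor $\sqrt{T}$ cannot repair an $\eps$-deficit. Your suggested remedy does not supply the missing $\eps^{\frac12}$. The mass constraint controls only the mean $\int_{\T^1}Z\,\dif s$, and the spectral/tangential-gradient bound gives $\|Z\|_{L^2(0,T;H^1(\T^1))}\lesssim R\eps^{N+\frac12}$, which is exactly the order already used above.

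The paper removes this term by building it into the approximate solution. $c_\mathrm{A}^{\mathrm{in}}$ contains the correction $\eps^{N-\frac12}\bigl(\theta_0'(\rho)+\eps\partial_\rho\hat c_1+\eps^2\partial_\rho\hat c_2\bigr)h_{N+\frac12}$. Here $h_{N+\frac12}$ solves the nonlocal parabolic problem \eqref{eq:PDE of h} on $\Gamma_t$ with forcing $-\no\cdot\ue$, where $\ue=\tilde{\we}/\eps^{N+\frac12}$. So $(c_\mathrm{A},\ve_\mathrm{A})$ depend on the exact solution through a nonlinear fixed-point problem, and $\ve_\mathrm{A}$ carries a compensating $\eps^{N+\frac12}\hat{\we}$. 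The solvability condition in the $\theta_0'$-direction (Theorem~\ref{thm: h_N+1/2}) produces exactly $\eps^{N+\frac12}\zg\ue|_\Gamma\cdot\nabla c_\mathrm{A}$ in \eqref{eq:approxAC}. Hence only $\eps^{N+\frac12}(\we-\we|_\Gamma)\cdot\nabla c_\mathrm{A}$ survives in \eqref{eq:u2}, which is $O(\eps^{2N+\frac32})$ after testing with $u$ by \eqref{est:diffw1couple}. Theorem~\ref{thm:weEstim} supplies $\|h_{N+\frac12}\|_{X_T}\le M$ and $\|\we\|\le C(R)$.

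This correction is also where the genuinely new mass-conserving work lies:
\begin{itemize}
\item Since $\lambda_0\not\equiv0$, the term $\hat c_1\not\equiv 0$ must enter the correction.
\item The $h_{N+\frac12}$-term changes $\frac{\dif}{\dif t}\int_\Omega c_\mathrm{A}\,\dif x$ at order $\eps^{N+\frac12}$. One therefore needs $\lambda_{N+\frac12}$ and $\eps^{N+\frac32}\theta_1(\rho)\lambda_{N+\frac12}$, fixed via \eqref{eq:lambdaN12} and the second part of Lemma~\ref{lem:MeanEstim}, to get $\bar{S}_\eps=O(\eps^{N+\frac32})$.
\item That precision, together with the cancellation $\int\zg f''(\theta_0(\rho))\theta_0'(\rho)Z\,\dif x=O(\eps^2)\|Z\|_{L^1}$ from the first part of Lemma~\ref{lem:MeanEstim}, is what makes $\eps^{-2}\int_\Omega f''(c_\mathrm{A})u\,\dif x\,\fint_\Omega u\,\dif x$ harmless.
\end{itemize}
With your $\int_\Omega u\,\dif x=O(\eps^{N+1})$ and the crude bound $|\int_\Omega f''(c_\mathrm{A})u\,\dif x|\lesssim\eps^{N+\frac32}$, that term is again only $O(\eps^{2N+\frac12})$, so your claim that it is ``harmless'' is not justified as stated.

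A smaller point: the viscosity term is fine only because $D\ve_\mathrm{A}$ is bounded uniformly in $\eps$. A genuine $\eps^{-1}$ loss there would not be compensated by any extra order.
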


\begin{rem}[Energy Estimates]
  Here  $c_{\mathrm{A},0}= c_\mathrm{A}|_{t=0}$ and $\ve_{\mathrm{A},0}= \ve_\mathrm{A}|_{t=0}$, where the construction of $(c_\mathrm{A}, \ve_\mathrm{A})$ is discussed in Section~\ref{sec:ApproxSolutions} below. In particular we will have $c_{\mathrm{A},0}= c_{\mathrm{A},0}^0 + O(\eps^2)$ with
  \begin{equation*}
    \begin{split}
      c_{\mathrm{A},0}^0(x) & =\zeta(d_{\Gamma_0}(x))\theta_0\left(\tfrac{d_{\Gamma_0}(x)}\eps\right)+(1-\zeta(d_{\Gamma_0}(x)))\left(
      \chi_{\Omega^+(0)}(x)-\chi_{\Omega^-(0)}(x)\right) \quad \text{for all }x\in \Om,
    \end{split}
  \end{equation*}
  where $d_{\Gamma_0}=d_\Gamma|_{t=0}$ is the signed distance function to $\Gamma_0$ and $\zeta\in C^\infty(\R)$ such that
  \begin{equation}\label{eq:1.34}
    \zeta(z)=1~\text{if}~|z|\leq\delta; ~\zeta(z)=0~\text{if}~|z|\geq 2\delta;~ 0\leq  -z\zeta'(z) \leq 4~\text{if}~ \delta\leq |z|\leq 2\delta.
  \end{equation}
  Moreover, we denote
  \begin{equation*}
    \zg(x,t)= \zeta(d_\Gamma(x,t))\qquad \text{for all }(x,t)\in \Gamma(2\delta)
  \end{equation*}
  and extend it by zero to $\ol\Omega \times [0,T]$.
  We remark that every sufficiently smooth solution of \eqref{eq:NSAC1}-\eqref{eq:NSAC5} satisfies the energy identity
  \begin{equation*}
    \frac{\dif}{\dif t}\left(\int_\Omega \tfrac12{|\ve_\eps|^2} \,\dif x\,
    +\int_\Omega (\tfrac{\eps}2|\nabla c_\eps|^2 + \tfrac1\eps f(c_\eps))\dif{x}\,   \right) = - \int_\Omega(|D\ve_\eps|^2 + \tfrac1\eps|\mu_\eps|^2)\,\dif x\,
  \end{equation*}
  for all $t\in (0,T_0)$, where $\mu_\eps = -\eps \Delta c_\eps + \tfrac1{\eps} f'(c_\eps)- \tfrac1{\eps}\fint_\Omega f'(c_\eps)\,dx $. In particular,
  \begin{align}\nonumber
     & \sup_{t\in [0,T_0]}\int_\Omega \tfrac12{|\ve_\eps(t)|^2} \,\dif x\,
    +\int_\Omega (\tfrac{\eps}2|\nabla c_\eps(t)|^2 + \tfrac1\eps f(c_\eps(t)))\dif{x}\,                                     \\\label{eq:EnergyEstim}
     & \qquad\qquad  + \int_0^{T_0}\int_\Omega (|D\ve_\eps|^2 + \tfrac1\eps|\mu_\eps|^2)\dif{x}\, \, \dif t \leq E_{0,\eps},
  \end{align}
  where
  \begin{equation*}
    E_{0,\eps} \coloneqq\int_\Omega \tfrac12{|\ve_{0,\eps}|^2} \,\dif x\,
    +\int_\Omega (\tfrac{\eps}2|\nabla c_{0,\eps}|^2 + \tfrac1\eps f(c_{0,\eps}))\dif{x}\,.
  \end{equation*}
  Hence the left-hand side in \eqref{eq:EnergyEstim} is uniformly bounded in $\eps \in (0,1)$ if $\sup_{\eps\in (0,1)} E_{0,\eps}<\infty$.
\end{rem}

Suitable approximate solutions to \eqref{eq:NSAC1}-\eqref{eq:NSAC4} in the non-conserved case were already constructed in \cite{AbeFei23} with the method of matched asymptotic expansions. The remaining task is to modify the Allen-Cahn solution to the mass-conserving case with the assistance of \cite{ChenHilhorstLogak} (see Theorem~\ref{thm:approx}). The main novelty compared to \cite{AbeFei23} is the occurrence of a first order term ($O(\eps)$) in the formal asymptotics, which vanishes in \cite[Remark A.6]{AbeFei23} and makes the proof of the central result for the approximate solution (cf. Theorem~\ref{thm:approx}) below more involved. Moreover, the total mass $\int_\Omega u(x,t)\dif x$ for the error $u=c_\eps-c_A$ and $\frac1{\eps^2}\fint_\Omega f'(c_\eps)\dif x$ have to be treated carefully.

This paper is structured as follows: In Section \ref{sec:Prelim}, we will present several preliminary results regarding local coordinates in the vicinity of the interface $\Gamma_t$, the definition of stretched variables, parabolic equations on evolving hypersurfaces, and a certain spectral estimate for the Allen-Cahn operator, which is uniform in the small parameter $\varepsilon$. In Section \ref{sec:ApproxSolutions}, the approximate solution is constructed using the method of matched asymptotic expansions and a novel ansatz in the critical order. Finally, the main result is proved in Section \ref{sec:main result}. The details on the matched asymptotic expansions are given in the appendix \ref{Appendix}.

\section{Preliminaries}\label{sec:Prelim}
In this section, we establish preliminaries as in \cite{AbeFei23}.
\subsection{Coordinates}\label{subsec:Coordinates}
The family $(\Gamma_t)_{t\in[0,T_0]}$ is parametrized through smooth diffeomorphisms $X_0\colon \mathbb{T}^1\times [0,T_0]\to \Om$, where $\partial_s X_0(s,t)\neq 0$ holds for all points $(s,t)\in\T^1\times [0,T_0]$ and $\T^1= \R /\Z$. We can choose $X_0$ such that $X_0(.,t)$ parametrizes $\Gamma_t$ proportional to arc-length, i.e., $|\partial_s X_0(s,t)|=\mathcal{H}^1(\Gamma_t)$ for all $(s,t)\in\T^1\times [0,T]$.
We define the normalized tangent and normal vectors by
\begin{equation*}
  \btau(s,t)= \frac{\partial_{s} X_0(s,t)}{|\partial_{s} X_0(s,t)|},\quad \text{and}\quad \no(s,t)= \begin{pmatrix} 0 & -1\\ 1 & 0 \end{pmatrix} \btau(s,t).
\end{equation*}
These vectors are defined on $\Gamma_t$ at position $X_0(s,t)$, with $\no(s,t)$ serving as the exterior normal relative to $\Omega^-(t)$. Additionally, we establish for $s\in\T^1$, $t\in [0,T_0]$: $$\begin{aligned}
    \no_{\Gamma_t}(x)\coloneqq\no (s,t)\text{ for } x=X_0(s,t)\in \Gamma_t,\quad
    V(s,t)\coloneqq V_{\Gamma_t}(X_0(s,t)),\quad H(s,t)\coloneqq H_{\Gamma_t}(X_0(s,t)).
  \end{aligned}$$
Here, $V_{\Gamma_t}$ and $H_{\Gamma_t}$ denote the normal velocity and mean curvature of $\Gamma_t$ with respect to $\no_{\Gamma_t}$. For tubular neighborhoods of $\Gamma_t$, given a sufficiently small $\delta>0$, the orthogonal projection $P_{\Gamma_t}(x)$ is well-defined and smooth for all points in \begin{equation*}
  \Gamma_t(2\delta) =\{y\in \Omega: \dist(y,\Gamma_t)<2\delta\}.
\end{equation*} The parameter $\delta$ is chosen small enough to ensure $\dist(\partial\Omega,\Gamma_t)>2\delta$ for all $t\in [0,T_0]$.
For points in $\Gamma_t(2\delta)$, we can express their unique decomposition as $x=P_{\Gamma_t}(x)+r\no_{\Gamma_t}(P_{\Gamma_t}(x)),$ where $r=\sdist(\Gamma_t,x)$. The signed distance function is defined as: \begin{equation*}
  d_{\G}(x,t)\coloneqq \sdist (\Gamma_t,x)= \begin{cases} \dist(\Omega^-(t),x)  & \text{if } x\not \in \Omega^-(t), \\
              -\dist(\Omega^+(t),x) & \text{if } x \in \Omega^-(t).\end{cases}
\end{equation*} For any $\delta'\in (0,2\delta]$, we define: \begin{equation*}
  \Gamma(\delta') =\bigcup_{t\in [0,T_0]} \Gamma_t(\delta') \times\{t\}, \qquad \Omega^\pm = \bigcup_{t\in [0,T_0]} \Omega^\pm (t)\times \{t\}
\end{equation*} A key integration formula that we frequently employ is: \begin{equation*}
  \int_{\Gamma_t(\delta')} f(x)\dif{x} = \int_{-\delta'}^{\delta'}\int_{\Gamma_t} f(p+r\no_{\Gamma_t}(p))J(r,p,t)\sd \sigma(p)\sd r,
\end{equation*}
where $J\colon (-2\delta,2\delta)\times \Gamma \to (0,\infty)$ represents a smooth function depending on $\Gamma$.

\paragraph{Important relations:}
Several fundamental relations hold (cf.~\cite[Section~4.1]{ChenHilhorstLogak}): \begin{equation}\label{eq:1.26}
  \nabla d_{\G}(x,t)=\no_{\Gamma_t} (P_{\Gamma_t}(x)),~ \partial_t d_{\G}(x,t)=-V_{\Gamma_t} (P_{\Gamma_t}(x)),~\Delta d_\Gamma(q,t)=-H_{\Gamma_t}(q)
\end{equation} for all $(x,t)\in \Gamma(2\delta)$ and $(q,t)\in\Gamma$. We define
\begin{equation}\label{eq:1.50}
  \partial_{\btau} u(x,t)\coloneqq\btau(S(x,t),t)\nabla_x u(x,t),\quad   \nabla_\btau u(x,t)\coloneqq\partial_{\btau} u(x,t)\btau(S(x,t),t)\quad
\end{equation}
for all $(x,t)\in \Gamma(2\delta)$.
Furthermore, we define pull-backs and push-forwards by
\begin{alignat*}{2}
  (X_0^\ast u)(s,t)      & \coloneqq u(X_0(s,t),t)      & \qquad & \text{for all }s\in\T^1,t\in[0,T_0], \\
  (X_0^{\ast,-1} v)(p,t) & \coloneqq v(X_0^{-1}(p,t),t) & \qquad & \text{for all }(p,t)\in\Gamma
\end{alignat*}
if $u\colon \Gamma\to \R^N$  and $v\colon \G_0\times [0,T_0]\to \R^N$  for some $N\in\N$.

\paragraph{New coordinates:}
We introduce new coordinates in $\Gamma(2\delta)$ through the mapping \begin{equation*}
\begin{aligned}
      X\colon (-2\delta, 2\delta)\times \T^1 \times [0,T_0]&\to \Gamma(2\delta),\\
  (r,s,t)&\mapsto X_0(s,t)+r\no(s,t),
\end{aligned}
\end{equation*} 
where
\begin{equation}\label{eq:1.42}
r=\sdist(\Gamma_t,x), \qquad s= X_{0}^{-1}(P_{\Gamma_t}(x),t)\eqqcolon S(x,t).
\end{equation}

\paragraph{Derivatives in new coordinates:}
For any $\phi$, defined on $\Gamma(2\delta)$, we associate a $\tilde{\phi}$, defined on $(-2\delta,2\delta)\times \Sigma \times [0,T_0]$, via
\begin{equation}\label{eq:1.4}
  \phi(x,t)=\tilde{\phi}(d_{\G}(x,t),S(x,t),t)\quad\text{or equivalently}\quad\phi(X_0(s,t)+r\no(s,t),t)=\tilde{\phi}(r,s,t)
\end{equation}\text{ for $(x,t)\in\Gamma(2\delta)$}. The following derivative relations hold as in \cite{StokesAllenCahn}: \begin{equation}\label{Prelim:1.13}
  \begin{split}
    \partial_t \phi(x,t) & = -V_{\Gamma_t} (P_{\Gamma_t}(x)) \partial_r\tilde{\phi}(r,s,t) + \partial_{t}^\Gamma \tilde{\phi}(r,s,t),                  \\
    \nabla \phi(x,t)     & = \no_{\Gamma_t} (P_{\Gamma_t}(x)) \partial_r\tilde{\phi}(r,s,t) + \nabla^ \Gamma \tilde{\phi}(r,s,t),                      \\
    \Delta \phi(x,t)     & = \partial_r^2\tilde{\phi}(r,s,t) + \Delta d_{\G_t}(x) \partial_r\tilde{\phi}(r,s,t) + \Delta^{\Gamma} \tilde{\phi}(r,s,t),
  \end{split}
\end{equation} where $r$, $s$ are defined through equation \eqref{eq:1.42}. We employ the notation (cf.~\cite[Section~4.1]{ChenHilhorstLogak}): \begin{equation}\label{Prelim:1.12}
  \begin{split}
    \partial_{t}^\Gamma \tilde{\phi}(r,s,t) & = \partial_t \tilde{\phi}(r,s,t) + \p_t S(x,t)\cdot\partial_s \tilde{\phi}(r,s,t) ,                 \\
    \nabla^{\Gamma} \tilde{\phi}(r,s,t)     & = \nabla S(x,t) \p_{s} \tilde{\phi}(r,s,t),                                                         \\
    \Delta^{\Gamma} \tilde{\phi}(r,s,t)     & = (\Delta S)(x,t)\cdot\partial_s \tilde{\phi}(r,s,t)+|\nabla S(x,t)|^2 \p_{s}^2 \tilde{\phi}(r,s,t)
  \end{split}
\end{equation}
In the equation \eqref{Prelim:1.12} the spatial variable $x$ is understood via $x=\no(s,t)r+X_0(s,t)$.
We remark that $\nabla^\G g$ is a function of $(r,s,t)$:
\begin{equation}\label{eq:1.46}
  \nabla^\G g(r,s,t)=(\nabla S)(x,t)\partial_s g(s,t), \qquad \text{where }x=X(r,s,t).
\end{equation}
Thus we  denote 
\begin{equation}\label{eq:1.27}
%  \begin{split}
    (\nabla_\G h)(s,t)\coloneqq (\nabla^\Gamma h)(0,s,t), \
    (\Delta_\G h)(s,t)\coloneqq (\Delta^\G h)(0,s,t),     \
    (D_t h)(s,t)\coloneqq (\p_t^\G h)(0,s,t),
%  \end{split}
\end{equation}
and
\begin{equation}\label{Prelim:1.11}
  \begin{split}
    (\Lgrad h)(r,s,t)\coloneqq (\nabla^\Gamma h)(r,s,t)-(\nabla_\Gamma h)(s,t),  \\
    (\Ldelta h)(r,s,t)\coloneqq (\Delta^\Gamma h)(r,s,t)-(\Delta_\Gamma h)(s,t), \\
    (\Lt h)(r,s,t)\coloneqq (\p_t^\G h)(r,s,t)-(D_t h)(s,t),
  \end{split}
\end{equation}
for all $(s,t)\in \T^1\times [0,T_0]$ and any $h\colon \T^1\times [0,T_0]\to \R$.
We note that the coefficients of the latter remainder operators vanish for $r=0$.

\subsection{Function Spaces}
For an open set $U\subseteq\R^N$, we utilize the following function spaces: $L^p(U)$ represents the classical Lebesgue space with respect to the Lebesgue measure, while $W^m_p(U)$ denotes the $L^p$-Sobolev space with differentiation order $m\in\N_0$. The space $H^s(U)$ indicates the $L^2$-Sobolev space with order $s\in\N$, and $H^s_0(U)$ represents the completion of $C_0^\infty(U)$ within $H^s(U)$. For vector-valued functions, we employ the notation $W^m_p(U;X)$, $L^p(U;X)$, and $H^s(U;X)$. The space $C_{0,\sigma}^\infty(U)$ is the divergence-free $C_0^\infty(U)$ space. The space $L^{p,\infty}(\Gamma_t(2\delta))$ consists of measurable functions $f: \Gamma_t(2\delta)\to \R$ with finite norm \begin{equation*}
  \|f\|_{L^{p,\infty}(\Gamma_t(2\delta))} \coloneqq\left(\int_{\T^1}\operatorname{ess\,sup}_{|r|\leq 2\delta } |f(X_0(s,t)+r\no(s,t))|^p \sd s\right)^{\frac1p}
\end{equation*} A key embedding property states that: \begin{equation}
  H^1(\Gamma_t(2\delta))\hookrightarrow L^{4,\infty}(\Gamma_t(2\delta))
\end{equation} This embedding is derived from the interpolation inequality: \begin{equation*}
  \|f\|_{L^\infty(-2\delta,2\delta)}\leq C\|f\|_{L^2(-2\delta,2\delta)}^{\frac12}\|f\|_{H^1(-2\delta,2\delta)}^{\frac12}\quad \text{for }f\in H^1(-2\delta,2\delta).
\end{equation*}

\subsection{The Stretched Variable and Remainder Terms}
As in \cite{ChenHilhorstLogak} and \cite{AbeFei23}, a stretched coordinate system is introduced through the variable
\begin{equation}\label{eq:StretchedVariable}
  \rho_\varepsilon = \frac{d_\Gamma(x,t)}{\eps}- h_\eps(s,t) \qquad \text{for } (x,t)\in \Gamma(2\delta), \eps \in (0,\eps_0)
\end{equation}
where $s=S(x,t)$ as in \eqref{eq:1.42}. The function $h_\eps: \T^1\times [0,T]\to \R$ is assumed to be sufficiently differentiable with uniform bounds on its $C^k$-norms for large $k\in\N$ and all $\eps\in (0,\eps_0)$. We will often abbreviation $\rho_\varepsilon$ by just $\rho$.
The following result is a consequence of the chain rule and \eqref{Prelim:1.13}, cf.\ \cite[Section~4.2]{ChenHilhorstLogak}:
\begin{lem}\label{lem:ChainRule}
  Let ${\hat{w}}\colon \R\times \Omega\times [0,T_0]\to \R$ be sufficiently smooth and let
  \begin{equation*}
    w(x,t)={\hat{w}}\(\rho(x,t),x,t\) \quad \text{for all }(x,t)\in \Gamma(2\delta).
  \end{equation*}
  Then for each $\eps>0$
  \begin{equation}\label{eq:formula1}
    \begin{split}
      \p_t w(x,t)=   & -\(\tfrac{V_{\Gamma_t} (P_{\Gamma_t}(x))}\eps +\p_t^\Gamma h_\eps(r,s,t)\)\p_\rho {\hat{w}}(\rho,x,t) +\p_t {\hat{w}}(\rho,x,t),         \\
      \nabla w(x,t)= & \(\tfrac{\no_{\Gamma_t} (P_{\Gamma_t}(x))}  \eps -\nabla^\Gamma h_\eps(r,s,t)\)\p_\rho{\hat{w}}(\rho,x,t) +\nabla{\hat{w}}(\rho,x,t),    \\
      \Delta w(x,t)= & (\eps^{-2}+|\nabla^\Gamma h_\eps(r,s,t)|^2) \p^2_\rho{\hat{w}}(\rho,x,t)                                                                 \\
                     & +\(\eps^{-1}\Delta d_\Gamma (x,t) -\Delta^\Gamma h_\eps(r,s,t)\)\p_\rho{\hat{w}}(\rho,x,t)                                               \\
                     & +2\left({\frac{\no_{\Gamma_t}}\eps}-\nabla^\Gamma h_\eps(r,s,t)\right)\cdot\nabla \p_\rho{\hat{w}}(\rho,x,t)+\Delta {\hat{w}}(\rho,x,t),
    \end{split}
  \end{equation}
  where $\rho$ is as in \eqref{eq:StretchedVariable} and $(r,s)$ is understood via \eqref{eq:1.42}.
\end{lem}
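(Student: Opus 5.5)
The formulas follow from the multivariable chain rule applied to $w(x,t)=\hat w(\rho(x,t),x,t)$. We first compute the derivatives of the stretched variable $\rho$ and then substitute them.

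\textbf{Derivatives of $\rho$.} By \eqref{eq:StretchedVariable}, $\rho=\eps^{-1}d_\Gamma(x,t)-h_\eps(S(x,t),t)$. We regard $h_\eps(S(x,t),t)$ as a function on $\Gamma(2\delta)$ that does not depend on $r$, and apply \eqref{Prelim:1.13} and \eqref{eq:1.26}.
\begin{itemize}
\item The $\partial_r$-derivative of $h_\eps(S(x,t),t)$ vanishes. Hence $\partial_t[h_\eps(S,t)]=\p_t^\Gamma h_\eps(r,s,t)$, $\nabla[h_\eps(S,t)]=\nabla^\Gamma h_\eps(r,s,t)$ and $\Delta[h_\eps(S,t)]=\Delta^\Gamma h_\eps(r,s,t)$.
\item For the distance function, $\partial_t d_\Gamma=-V_{\Gamma_t}(P_{\Gamma_t}(x))$ and $\nabla d_\Gamma=\no_{\Gamma_t}(P_{\Gamma_t}(x))$.
\end{itemize}
Therefore
\begin{equation*}
\p_t\rho=-\tfrac{V_{\Gamma_t}(P_{\Gamma_t}(x))}{\eps}-\p_t^\Gamma h_\eps,\qquad \nabla\rho=\tfrac{\no_{\Gamma_t}(P_{\Gamma_t}(x))}{\eps}-\nabla^\Gamma h_\eps,\qquad \Delta\rho=\eps^{-1}\Delta d_\Gamma-\Delta^\Gamma h_\eps .
\end{equation*}

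\textbf{First-order formulas.} The chain rule gives $\p_t w=\p_t\rho\,\p_\rho\hat w+\p_t\hat w$ and $\nabla w=\nabla\rho\,\p_\rho\hat w+\nabla\hat w$. Inserting the expressions for $\p_t\rho$ and $\nabla\rho$ yields the first two lines of \eqref{eq:formula1}. Here $\p_t\hat w$ and $\nabla\hat w$ denote derivatives of $\hat w$ in its second and third arguments, evaluated at $(\rho,x,t)$.

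\textbf{Laplacian.} Differentiating $\nabla w$ once more gives
\begin{equation*}
\Delta w=|\nabla\rho|^2\p_\rho^2\hat w+\Delta\rho\,\p_\rho\hat w+2\nabla\rho\cdot\nabla\p_\rho\hat w+\Delta\hat w .
\end{equation*}
We expand
\begin{equation*}
|\nabla\rho|^2=\eps^{-2}|\no_{\Gamma_t}|^2-2\eps^{-1}\no_{\Gamma_t}\cdot\nabla^\Gamma h_\eps+|\nabla^\Gamma h_\eps|^2 .
\end{equation*}
Since $|\no_{\Gamma_t}|=1$, the first term equals $\eps^{-2}$. The cross term vanishes because $\nabla^\Gamma h_\eps=\nabla S\,\p_s h_\eps$ and $\nabla S\cdot\nabla d_\Gamma=0$. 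The latter holds since $S$ is constant along normal lines $r\mapsto X_0(s,t)+r\no(s,t)$, so its gradient is orthogonal to $\nabla d_\Gamma=\no_{\Gamma_t}(P_{\Gamma_t}(x))$. This gives the coefficient $\eps^{-2}+|\nabla^\Gamma h_\eps|^2$ of $\p_\rho^2\hat w$. Inserting the expressions for $\Delta\rho$ and $\nabla\rho$ into the remaining terms produces the stated formula.

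\textbf{Main obstacle.} The only nontrivial point is the orthogonality $\nabla S\cdot\no_{\Gamma_t}=0$, which removes the cross term. It follows from $S(X_0(s,t)+r\no(s,t),t)=s$ for all $|r|<2\delta$: differentiating in $r$ gives $\nabla S\cdot\no=0$.
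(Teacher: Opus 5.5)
Your proposal is correct and follows the paper's route. The paper states the lemma as a consequence of the chain rule together with \eqref{Prelim:1.13} (citing Chen--Hilhorst--Logak), which is exactly your argument. You also make explicit the one point the paper leaves implicit: the cross term $-2\eps^{-1}\no_{\Gamma_t}\cdot\nabla^\Gamma h_\eps$ in $|\nabla\rho|^2$ vanishes because $\nabla S\cdot\no_{\Gamma_t}=0$.
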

We use the following classes to systematically handle exponentially decaying remainder terms as in \cite[Definition~2.3]{AbeFei23}:
\begin{defn}\label{eq:1.15}
  For any $k\in \R$ and $\alpha>0$,  $\mathcal{R}_{k,\alpha}$ denotes the vector space of families of continuous functions $\tr_\eps\colon \R\times \Gamma(2\delta) \to \R$, indexed by $\eps\in (0,1)$, which are continuously differentiable with respect to $\no_{\Gamma_t}$ for all $t\in [0,T_0]$ such that
  \begin{equation}\label{eq:EstimRkalpha}
    |\partial_{\no_{\Gamma_t}}^j \tr_\eps(\rho,x,t)|\leq Ce^{-\alpha |\rho|}\eps^k\qquad \text{for all }\rho\in \R,(x,t)\in\Gamma(2\delta),\, j\in\{0,1\},\, \eps \in (0,1)
  \end{equation}
  for some $C>0$ independent of $\rho\in \R,(x,t)\in\Gamma(2\delta)$, $\eps\in (0,1)$.  $\mathcal{R}_{k,\alpha}^0$ is the subclass of all $(\tr_\eps)_{\eps\in (0,1)}\in \mathcal{R}_{k,\alpha}$ such that
  $\tr_\eps(\rho,x,t)= 0$ for all  $\rho \in\R, x\in\Gamma_t, t\in [0,T_0]$.
\end{defn}
For remainder estimates, we use:
\begin{lem}\label{lem:rescale}
  Let $0<\eps\leq \eps_0$, $h_\eps$ be as in the beginning of this subsection and satisfy
  \begin{equation*}
    M\coloneqq \sup_{0<\eps <\eps_0, (s,t)\in \T^1\times [0,T_\eps]} |h_\eps (s,t)| <\infty
  \end{equation*}
  for some  $T_\eps \in (0,T_0]$, $\eps_0\in (0,1)$,  and   $(\tr_\eps)_{0<\eps<1}\in \mathcal{R}_{k,\alpha}$ for some $\alpha>0$, $k\in\R$ and let $j=1$ if $(\tr_\eps)_{0<\eps<1}\in \mathcal{R}_{k,\alpha}^0$ and $j=0$ else.
  Then there is some $C>0$, independent of $T_\eps,0<\eps\leq \eps_0$, $\eps_0\in (0,1)$  such that
  \begin{equation*}
    r_\eps (x,t)\coloneqq\tr_\eps\left( \rho, x,t\right)\qquad \text{for all }(x,t)\in\Gamma(2\delta)
  \end{equation*}
  with $\rho$ as in \eqref{eq:StretchedVariable} satisfies
  \begin{align}\label{eq:RemEstim1}
    \left\|{\mathfrak{a}(P_{\Gamma_t}(\cdot))r_\eps \varphi} \right\|_{L^1(\G_t(2\delta))} & \leq C(1+M)^j\eps^{1+k+j}\|\varphi\|_{H^1(\Omega)}\|\mathfrak{a}\|_{L^2(\Gamma_t)}, \\
    \label{eq:RemEstim2}
    \left\| {\mathfrak{a}(P_{\Gamma_t}(\cdot))r_\eps} \right\|_{L^2(\G_t( 2\delta))}       & \leq C (1+M)^j\eps^{\frac 12+k+j} \|\mathfrak{a}\|_{L^2(\Gamma_t)}
  \end{align}
  uniformly for all $\varphi\in H^1(\Omega)$, $\mathfrak{a}\in L^2(\Gamma_t)$, $t\in [0,T_\eps]$, and $\eps\in (0,\eps_0]$.
\end{lem}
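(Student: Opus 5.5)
The plan is to change to the tubular coordinates $(r,s)$ from Section~\ref{subsec:Coordinates} and then rescale in the normal direction, so that the exponential decay in $\rho$ produces the factor of $\eps$.

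\emph{Reduction to a one-dimensional estimate.}
Since $|\rho|\to\infty$ only with a uniform shift, we use the integration formula
\[
\int_{\Gamma_t(2\delta)} g\,dx=\int_{-2\delta}^{2\delta}\int_{\Gamma_t} g(p+r\no_{\Gamma_t}(p))\,J\,d\sigma(p)\,dr .
\]
Here $J$ is bounded above and below, and $r=d_\Gamma$. By the definition of $\mathcal{R}_{k,\alpha}$ we have the pointwise bound
\[
|r_\eps(x,t)|\le C\eps^k e^{-\alpha|r/\eps-h_\eps(s,t)|} .
\]
The triangle inequality gives $|r/\eps-h_\eps|\ge |r|/\eps-M$, so
\[
|r_\eps|\le C e^{\alpha M}\eps^k e^{-\alpha|r|/\eps} .
\]
To get only a polynomial factor $(1+M)^j$ rather than $e^{\alpha M}$, I would instead keep the shifted variable and substitute $r=\eps(\rho+h_\eps(s,t))$ in the $r$-integral. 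The exponential weight then integrates to at most $C\eps$ uniformly in $h_\eps$, provided we extend the $\rho$-integral to all of $\R$.

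\emph{Proof of \eqref{eq:RemEstim2} for $j=0$.*}
This gives
\[
\int_{\Gamma_t}|\mathfrak a(p)|^2\int_{-2\delta}^{2\delta}\eps^{2k}e^{-2\alpha|r/\eps-h_\eps|}\,dr\,d\sigma\le C\eps^{2k+1}\|\mathfrak a\|_{L^2(\Gamma_t)}^2 .
\]

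\emph{Proof of \eqref{eq:RemEstim2} for $j=1$.}
When $\tr_\eps\in\mathcal{R}^0_{k,\alpha}$, the function vanishes on $\Gamma_t$, i.e.\ at $r=0$. Using the bound on $\partial_{\no}\tr_\eps$ and the mean value theorem in $r$, we get
\[
|r_\eps|\le C\eps^k|r|e^{-\alpha|\rho|} .
\]
One has to be careful here. Along the normal line the point $x$ moves and $\rho$ changes too, but $\partial_{\no}$ in the definition acts on the $x$-slot for fixed $\rho$, so the mean value theorem applies to the $x$-dependence at fixed $\rho$. Then
\[
|r|=\eps|\rho+h_\eps|\le\eps(|\rho|+M) .
\]
Absorbing $|\rho|$ into the exponential at the cost of reducing $\alpha$ yields the extra factor $\eps(1+M)$.

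\emph{Proof of \eqref{eq:RemEstim1}.}
Here we need $\varphi\in H^1$, and the extra $\eps^{1/2}$ compared to Cauchy--Schwarz with \eqref{eq:RemEstim2} comes from a sup in $r$. We estimate
\[
\int_{\Gamma_t}|\mathfrak a|\,\sup_{|r|\le2\delta}|\varphi(p+r\no)|\int|r_\eps|\,dr\,d\sigma\le C(1+M)^j\eps^{1+k+j}\|\mathfrak a\|_{L^2(\Gamma_t)}\|\varphi\|_{L^{2,\infty}(\Gamma_t(2\delta))} .
\]
We then use $H^1(\Gamma_t(2\delta))\hookrightarrow L^{4,\infty}\hookrightarrow L^{2,\infty}$, which follows from the stated one-dimensional interpolation inequality applied on each normal line and then integrated in $s$.

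The only delicate points are the uniformity in $h_\eps$ (handled by the substitution $r=\eps(\rho+h_\eps)$) and the mean value argument in the $\mathcal{R}^0$ case. Everything else is routine.
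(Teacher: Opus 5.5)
Your proposal is correct. It is essentially the argument behind the result the paper cites for this lemma (\cite[Corollary~2.7]{StokesAllenCahn}; the paper gives no proof of its own): tubular coordinates, the substitution $r=\eps(\rho+h_\eps)$ to gain a factor $\eps$ uniformly in $h_\eps$, the normal mean-value bound $|r_\eps|\le C\eps^k|d_\Gamma|e^{-\alpha|\rho|}$ with $|d_\Gamma|\le\eps(|\rho|+M)$ in the $\mathcal{R}^0_{k,\alpha}$ case, and a supremum along normal lines (an $L^{2,\infty}$-type bound on $\varphi$ controlled by $H^1$) for the $L^1$ estimate; you only need to add the routine remark that the Jacobian and embedding constants are uniform in $t\in[0,T_0]$.
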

We refer to \cite[Corollary 2.7]{StokesAllenCahn} for the proof of the last lemma and for the next result to the prior work \cite[Lemma 2.5]{AbeFei23}:

\begin{lem}\label{lem:DivergenceFreeRemainder}
  Let $f\in \SD(\R)$ such that $f'\in \mathcal{R}_{0,\alpha}$ for some $\alpha>0$. Then there is a constant $C>0$ such that for all $t\in [0,T_0]$, $a\in H^1(\T^1)$ and $\boldsymbol{\varphi}\in C_{0,\sigma}^\infty(\Omega)$ we have
  \begin{equation*}
    \left|\int_{\Gamma_t(2\delta)} f'(\rho(x,t))a(S(x,t))\no_{\Gamma_t}\otimes \no_{\Gamma_t} : \nabla \boldsymbol{\varphi}  \dif x\,\right|\leq C \eps^{\frac32} \|a\|_{H^1(\T^1)}\|\boldsymbol{\varphi}\|_{H^1(\Omega)}.
  \end{equation*}
\end{lem}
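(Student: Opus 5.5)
The statement immediately above is Lemma~\ref{lem:DivergenceFreeRemainder}. The idea is to exploit that $\boldsymbol{\varphi}$ is divergence free, which lets us move the derivative from $\boldsymbol{\varphi}$ onto $f'(\rho)$ and gain a factor $\eps$.

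Since $\Div\boldsymbol{\varphi}=0$, we have $\no\otimes\no:\nabla\boldsymbol{\varphi} = \no\cdot\partial_\no\boldsymbol{\varphi} = -\btau\cdot\partial_\btau\boldsymbol{\varphi}$ in $\Gamma_t(2\delta)$, because $\Div\boldsymbol{\varphi}=\no\cdot\partial_\no\boldsymbol{\varphi}+\btau\cdot\partial_\btau\boldsymbol{\varphi}$ with $\no=\no_{\Gamma_t}(P_{\Gamma_t}(x))$ and $\btau$ extended constantly in the normal direction. The integrand is therefore $-f'(\rho)a(S)\,\btau\cdot\partial_\btau\boldsymbol{\varphi}$, which contains only a tangential derivative of $\boldsymbol{\varphi}$.

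Next we integrate by parts tangentially, using the coordinates $(r,s)$ and the integration formula with Jacobian $J$. Since $\rho=r/\eps-h_\eps(s,t)$, a tangential derivative of $f'(\rho)$ produces $-\partial_s h_\eps\, f''(\rho)$ and no factor $\eps^{-1}$. There are no boundary terms, because the $s$-direction is periodic on $\T^1$ and $\boldsymbol{\varphi}$ vanishes near $\partial\Omega$; the cutoff at $|r|=2\delta$ is harmless since $f'(\rho)$ is exponentially small there. After integrating by parts, all the terms have the form
\begin{equation*}
\int_{\Gamma_t(2\delta)} g(\rho)\,b(S,r)\,\boldsymbol{\varphi}\dif x,
\end{equation*}
where $g\in\{f',f''\}$ decays exponentially. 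The coefficient $b$ is built from $\partial_s a$, $a$, $\partial_s h_\eps$, $\partial_s J$ and $\partial_s\btau$, and it is bounded in $L^2(\T^1)$ by $C\|a\|_{H^1(\T^1)}$. At first sight this gives only $\eps\|a\|_{H^1}\|\boldsymbol{\varphi}\|_{H^1}$ via \eqref{eq:RemEstim1} with $k=0$, $j=0$.

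To reach $\eps^{3/2}$ we expand $\boldsymbol{\varphi}(r,s)=\boldsymbol{\varphi}(0,s)+\int_0^r\partial_r\boldsymbol{\varphi}$. For the remainder, Cauchy--Schwarz gives
\begin{equation*}
\Big|\int_0^r\partial_r\boldsymbol{\varphi}\Big|\le |r|^{1/2}\|\partial_r\boldsymbol{\varphi}(\cdot,s)\|_{L^2},
\end{equation*}
and the weight $|r|^{1/2}e^{-\alpha|r|/\eps}$ integrates in $r$ to $O(\eps^{3/2})$. The leading part is $\int_{\T^1}\boldsymbol{\varphi}(0,s)\,b\int_\R g(r/\eps-h_\eps)J\dif r\dif s$. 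If $f$ itself (and not only $f'$) decays, e.g.\ $f\in\mathcal S(\R)$ with $\int_\R f''=0$, then we use $\int_\R f''(\rho)\dif\rho=0$ together with a $J(r)=J(0)+O(r)$ expansion, which gives an extra $\eps$. The main obstacle is the $f'$-terms, for which $\int_\R f'\ne0$ in general. For these I would not integrate by parts in the first place. Instead, for the term $\int f'(\rho)a\,\btau\cdot\partial_\btau\boldsymbol{\varphi}$, I would write $f'(\rho)=\eps\,\partial_r[f(\rho)]$ and integrate by parts in $r$, which moves the derivative onto $\partial_\btau\boldsymbol{\varphi}$. I would then use $\Div\boldsymbol{\varphi}=0$ once more to turn $\partial_r(\btau\cdot\partial_\btau\boldsymbol{\varphi})$ into a second tangential derivative, and integrate by parts in $s$, keeping the total number of derivatives on $\boldsymbol{\varphi}$ equal to one. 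Combined with the $r^{1/2}$ trick above, this gives the factor $\eps\cdot\eps^{1/2}$.
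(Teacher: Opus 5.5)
Your first route (div-free rewrite, tangential integration by parts, splitting off the trace of $\boldsymbol{\varphi}$, the $|r|^{1/2}$ bound) already proves the lemma. The trouble is that you misidentify the ``main obstacle'' and then replace a working step with a faulty one.

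In the statement, $\SD(\R)=\mathcal{S}(\R)$ is the Schwartz space. So $f$ itself is rapidly decreasing and $\int_\R f'(\rho)\dif\rho=0$, and there is no need to treat $f'$- and $f''$-terms differently. Write $\dif x=\tilde J\dif r\dif s$, with $\tilde J$ the Jacobian of $(r,s)\mapsto X(r,s,t)$. In two dimensions $\tilde J\,\partial_\btau\boldsymbol{\varphi}=\partial_s\tilde{\boldsymbol{\varphi}}$ exactly. Hence after your tangential integration by parts the integral equals $\int_{-2\delta}^{2\delta}\int_{\T^1}\partial_s\big(f'(\rho)a\btau\big)\cdot\tilde{\boldsymbol{\varphi}}(r,s)\dif s\dif r$.

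For the part with $\tilde{\boldsymbol{\varphi}}(0,s)$, pull $\partial_s$ out of the $r$-integral:
\begin{equation*}
  \int_{-2\delta}^{2\delta}\partial_s\big(f'(\rho)a\btau\big)\dif r=\partial_s\Big(\eps\, a\btau\,\big[f\big(\tfrac{2\delta}{\eps}-h_\eps\big)-f\big(-\tfrac{2\delta}{\eps}-h_\eps\big)\big]\Big).
\end{equation*}
This is $O(\eps^m)\|a\|_{H^1(\T^1)}$ in $L^2(\T^1)$ for every $m$. Pairing it with the trace $\tilde{\boldsymbol{\varphi}}(0,\cdot)$, which is bounded in $L^2(\T^1)$ by $C\|\boldsymbol{\varphi}\|_{H^1(\Omega)}$, disposes of the whole leading part at once.

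For the part with $\tilde{\boldsymbol{\varphi}}(r,s)-\tilde{\boldsymbol{\varphi}}(0,s)$, use $|\partial_s(f'(\rho)a\btau)|\le C(|f'(\rho)|+|f''(\rho)|)(|a|+|\partial_s a|)$. Your $|r|^{1/2}$ argument then gives $O(\eps^{3/2})\|a\|_{H^1(\T^1)}\|\nabla\boldsymbol{\varphi}\|_{L^2}$, and the proof is complete.

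Drop the replacement argument for the $f'$-terms, for two reasons.
\begin{itemize}
\item \emph{It assumes the decay it is meant to avoid.} After writing $f'(\rho)=\eps\partial_r[f(\rho)]$, you need $f(\rho)$ to be negligible at $r=\pm2\delta$ and to have $L^2$-norm $O(\eps^{1/2})$. That is exactly the decay of $f$ whose supposed absence motivated the detour. Without it, both the boundary terms and the bulk term are only $O(\eps)$.
\item \emph{The key step is false.} ``Use $\Div\boldsymbol{\varphi}=0$ once more to turn $\partial_r(\btau\cdot\partial_\btau\boldsymbol{\varphi})$ into a second tangential derivative'' does not work. The divergence condition only exchanges $\btau\cdot\partial_\btau\boldsymbol{\varphi}$ for $-\no\cdot\partial_\no\boldsymbol{\varphi}$, so its $r$-derivative is a second normal or mixed derivative. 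What would work is $\partial_r\partial_s\tilde{\boldsymbol{\varphi}}=\partial_s\partial_r\tilde{\boldsymbol{\varphi}}$ followed by one integration by parts in $s$.
\end{itemize}
The case you worried about cannot be rescued by any argument. If $\int_\R f'\neq0$, the leading part is $\eps\big(\int_\R f'\big)\int_{\T^1}\partial_s(a\btau)\cdot\tilde{\boldsymbol{\varphi}}(0,s)\dif s$ up to $O(\eps^{3/2})$, which is generically of exact order $\eps$. So the decay of $f$ is necessary, not merely convenient.

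The paper gives no proof here and refers to \cite[Lemma 2.5]{AbeFei23}. The hypotheses $f\in\mathcal{S}(\R)$, $f'\in\mathcal{R}_{0,\alpha}$ also fit a shorter, coordinate-free argument.
\begin{itemize}
\item By Lemma~\ref{lem:ChainRule}, $f'(\rho)\no=\eps\nabla[f(\rho)]+\eps f'(\rho)\nabla^\Gamma h_\eps$. Substitute this for the factor $\no$ contracted with the components of $\boldsymbol{\varphi}$, after inserting $\zg$, which costs only exponentially small terms.
\item Integrate by parts in $x$. The second-order term is $\eps f(\rho)a\,\no\cdot\nabla(\Div\boldsymbol{\varphi})=0$.
\item Every remaining term is $\eps$ times $f(\rho)$ or $f'(\rho)$, times $a(S)$ or $\partial_s a(S)$, times $\nabla\boldsymbol{\varphi}$. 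Each is $O(\eps^{3/2})$ by \eqref{eq:RemEstim2}.
\end{itemize}
That route avoids traces and the splitting of $\boldsymbol{\varphi}$. Your corrected route instead makes visible that the gain comes from $\int_\R f'=0$ plus $\frac12$-H\"older continuity of $H^1$-functions in the normal direction.
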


The following lemma provides the key quantitative estimates on localized diffuse-interface integrals over the diffuse interface region that will be essential in the proof of Theorem \ref{thm: h_N+1/2}, where the approximate mass conservation property is established.

\begin{lem}\label{lem:MeanEstim}
  Assume that $f\in\mathcal{S}(\R)$. Then there is some $C>0$ such that
  \begin{equation*}
\left|    \int_{\Gamma_t(2\delta)} \zeta_\Gamma f'(\rho_\eps(x,t)) a(S(x,t))\dif{x} \right|\leq C\eps^2\|a\|_{L^1(\T^1)}
\end{equation*}
for all $a\in L^1(\T^1)$, $\eps\in (0,1]$ and $t\in [0,T_0]$.   Moreover, if $a,b\in L^1(\T^1)$ such that
  \begin{equation}\label{eq:Cancelation}
   \int_{\T^1} (a(s)\kappa(s,t)-b(s))\dif{s} =0,
  \end{equation}
  then
  \begin{equation*}
\left|    \int_{\Gamma_t(2\delta)} \zeta_\Gamma (f'(\rho_\eps(x,t)) a(S(x,t))+\eps f(\rho_\eps(x,t))b(S(x,t) )\dif{x} \right|\leq C\eps^3\left(\|a\|_{L^1(\T^1)}+\|b\|_{L^1(\T^1)}\right),
\end{equation*}
for some $C>0$ independent of $a,b$, $t\in[0,T_0]$ and $\eps \in (0,1]$.
\end{lem}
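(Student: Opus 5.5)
The plan is to pass to the tubular coordinates of Section~\ref{subsec:Coordinates}, rescale the normal variable to the stretched variable $\rho$, and expand the area element in $\eps$. The cancellations will come from two facts: $\int_\R f'(\rho)\dif\rho=0$, and the moment identity $\int_\R f'(\rho)\rho\dif\rho=-\int_\R f(\rho)\dif\rho$.

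\textbf{Step 1 (coordinates).} Writing $x=X(r,s,t)=X_0(s,t)+r\no(s,t)$, we have $\dif x=\tilde J(r,s,t)\dif r\dif s$ on $\Gamma_t(2\delta)$. In two dimensions, and with $|\p_sX_0|=\mathcal H^1(\Gamma_t)=:L(t)$, the area element is exactly affine in $r$:
\begin{equation*}
\tilde J(r,s,t)=L(t)\bigl(1+r\kappa(s,t)\bigr),
\end{equation*}
where $\kappa$ is the signed curvature with the sign convention implicit in \eqref{eq:Cancelation} (so $\kappa=\pm H$, up to orientation). I expect this to be precisely the $\kappa$ of the statement; if a different normalisation is used, only the constant in the cancellation condition changes.

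\textbf{Step 2 (stretching).} Substitute $r=\eps(\rho+h_\eps(s,t))$, so $\dif r=\eps\dif\rho$. Then the first integral becomes
\begin{equation*}
\eps L(t)\int_{\T^1}\int_{I_\eps(s)}\zeta\bigl(\eps(\rho+h_\eps)\bigr)f'(\rho)a(s)\bigl(1+\eps(\rho+h_\eps)\kappa\bigr)\dif\rho\dif s,
\end{equation*}
where $I_\eps(s)=(-2\delta/\eps-h_\eps,\,2\delta/\eps-h_\eps)$.

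\textbf{Step 3 (removing the cutoff).} Since $\zeta=1$ for $|r|\le\delta$ and $h_\eps$ is uniformly bounded, replacing $\zeta(\cdot)\chi_{I_\eps}$ by $1$ on all of $\R$ changes the integrand only where $|\rho|\ge \delta/(2\eps)$ (for $\eps$ small). Because $f\in\SD(\R)$, the factors $f'(\rho)$ and $(1+|\rho|)|f'(\rho)|$ are integrable with tails $O(\eps^m)$ for every $m$. This produces an error $C\eps^3\|a\|_{L^1(\T^1)}$. The same bound holds for the $f$ and $b$ term. For $\eps$ not small, the claimed estimate is trivial from $|\tilde J|\le C$ and the integrability of $f$ and $f'$.

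\textbf{Step 4 (evaluating the full-line integrals).} On $\R$ we have $\int f'=0$ and $\int f'(\rho)(\rho+h_\eps)\dif\rho=-\int f$. The first integral therefore equals
\begin{equation*}
-\eps^2L(t)\Bigl(\int_\R f\Bigr)\int_{\T^1}a\kappa\dif s+O(\eps^3)\|a\|_{L^1},
\end{equation*}
which already gives the first claim. For the second term,
\begin{equation*}
\eps\int_{\Gamma_t(2\delta)}\zeta_\Gamma f(\rho_\eps)b\dif x=\eps^2L(t)\Bigl(\int_\R f\Bigr)\int_{\T^1}b\dif s+\eps^3L(t)\int_{\T^1}\int_\R f(\rho)(\rho+h_\eps)\kappa b\dif\rho\dif s+O(\eps^3)\|b\|_{L^1}.
\end{equation*}
The middle term is bounded by $C\eps^3\|b\|_{L^1}$, since $\rho f(\rho)$ is integrable and $h_\eps$ is bounded.

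\textbf{Step 5 (cancellation).} Adding Steps 4, the $\eps^2$ contributions combine to
\begin{equation*}
-\eps^2L(t)\Bigl(\int_\R f\Bigr)\int_{\T^1}(a\kappa-b)\dif s,
\end{equation*}
which vanishes by \eqref{eq:Cancelation}. All remaining terms are $O\bigl(\eps^3(\|a\|_{L^1}+\|b\|_{L^1})\bigr)$, uniformly in $t$, because $\kappa$, $L$ and $h_\eps$ are uniformly bounded on $[0,T_0]$.

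\textbf{Main obstacle.} The only delicate point is Step~1: getting the exact form and sign of $\tilde J$ so that its first-order coefficient matches the $\kappa$ in \eqref{eq:Cancelation}, with the orientation of $\no$ and the arc-length normalisation $|\p_sX_0|=L(t)$ handled correctly. Everything else is routine Schwartz-tail bookkeeping.
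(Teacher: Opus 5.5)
Your proof is correct and is essentially the paper's argument. The paper integrates by parts in $r$ via $f'(\tfrac r\eps-h_\eps)=\eps\tfrac{\dif}{\dif r}f(\tfrac r\eps-h_\eps)$, using $J_t=1+\kappa r+O(r^2)$ with $\kappa=\partial_rJ_t(0,\cdot)$; this is the same computation as your moment identities $\int_\R f'\dif\rho=0$ and $\int_\R\rho f'(\rho)\dif\rho=-\int_\R f\dif\rho$ after the substitution $r=\eps(\rho+h_\eps)$. Substituting first has one small advantage: once the cutoff is removed, the $\eps^2$-coefficient $L(t)\int_\R f\dif\rho$ is visibly independent of $s$, which the paper uses tacitly when it pulls $f(\tfrac r\eps-h_\eps(s,t))$ out of the $s$-integral in $I_{1,1}+I_2$. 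On the sign question you flag, your $\kappa$ (defined by $\tilde J=L(t)(1+r\kappa)$) is the paper's one, namely $\partial_r\log\tilde J|_{r=0}=\Delta d_\Gamma|_{\Gamma_t}=-H_{\Gamma_t}$.
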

\begin{proof}
Let $t\in [0,T]$ be given. For simplicity we assume that $\mathcal{H}^1(\Gamma_t)=1$ for consistency with the assumptions in \cite{ChenHilhorstLogak}. By a simple scaling we can always reduce to that case. 
 For the proof of the first statement in Lemma \ref{lem:MeanEstim} we use that
    \begin{align*}
    &\int_{\Gamma_t(2\delta)} \zeta_\Gamma f'(\rho_\eps(x,t)) a(S(x,t))\dif{x} = \int_{-2\delta}^{2\delta}\int_{\T^1} \zeta (r) f'(\tfrac{r}\eps-h_\eps(s,t)) a(s)J_t(r,s)\dif{s}\dif{r},
    \end{align*}
    where $J_t(r,s)= 1+ rJ_{1,t}(r,s)$ for some uniformly bounded $J_{1,t}$. Moreover,
    using
    $$
    f'(\tfrac{r}\eps-h_\eps(s,t))= \eps \frac{\dif}{\dif r}f(\tfrac{r}\eps-h_\eps(s,t))
    $$
    and repeated integration by parts, we obtain for any $N\in\N$:
  \begin{align*}
    &\left|\int_{-2\delta}^{2\delta}\int_{\T^1} \zeta (r) f'(\tfrac{r}\eps-h_\eps(s,t)) a(s)\dif{s}\dif{r}\right| \\
    &\leq \eps\int_{-2\delta}^{2\delta}\int_{\T^1} |\zeta' (r)| f(\tfrac{r}\eps-h_\eps(s,t)) a(s)|\dif{s}\dif{r}\leq  C_N\eps^N\|a\|_{L^1(\T^1)}  
  \end{align*}
  for some $C_N$ independent of $\eps, t,a$
  since $f$ is rapidly decreasing and $\supp \zeta'\subseteq [-2\delta,-\delta]\cup [\delta,2\delta] $. Finally,
  \begin{align*}
    &\left|\int_{-2\delta}^{2\delta}\int_{\T^1} \zeta (r) rf'(\tfrac{r}\eps-h_\eps(s,t)) a(s)J_1(r,s)\dif{s}\dif{r}\right|\\
    &\quad \leq C\eps\int_{\R} (|\tfrac{r}\eps|+1)|f(\tfrac{r}\eps)|\,\dif{r} \|a\|_{L^1(\T^1)}  = C'\eps^2\|a\|_{L^1(\T^1)}  
  \end{align*}
  In order to prove the second statement in Lemma \ref{lem:MeanEstim}, we use that
  \begin{equation*}
    J_t(r,s)= 1+ \partial_r J_t(0,s)r + r^2 J_{2,t}(r,s)\qquad \text{for all } r\in (-2\delta,2\delta),s\in\T^1
  \end{equation*}
  and some uniformly bounded $J_{2,t}$. Here $\partial_rJ_t(0,s)= \kappa(s,t)$, cf.\ \cite[Lemma 4]{ChenHilhorstLogak}. Then we obtain 
 \begin{align*}
&\int_{\Gamma_t(2\delta)} \zeta_\Gamma \bigl( f'(\rho_\eps(x,t)) a(S(x,t)) + \eps f(\rho_\eps(x,t)) b(S(x,t)) \bigr) \dif{x} \\
&= \int_{-2\delta}^{2\delta} \int_{\mathbb{T}^1} \zeta(r) f'\!\Bigl( \tfrac{r}{\eps} - h_\eps(s,t) \Bigr) a(s) \bigl( 1 + \kappa(s,t)  r \bigr) \dif{s} \dif{r} \\
&\quad + \eps \int_{-2\delta}^{2\delta} \int_{\mathbb{T}^1} \zeta(r) f\!\Bigl( \tfrac{r}{\eps} - h_\eps(s,t) \Bigr) b(s) \dif{s} \dif{r} \\
&\quad + \eps^2 \int_{-2\delta}^{2\delta} \int_{\mathbb{T}^1} \zeta(r) \Bigl[ \tfrac{r^2}{\eps^2} f'\!\Bigl( \tfrac{r}{\eps} - h_\eps(s,t) \Bigr) a(s) J_{2,t}(r,s) + \tfrac{r}{\eps} f\!\Bigl( \tfrac{r}{\eps} - h_\eps(s,t) \Bigr) b(s) J_{1,t}(r,s) \Bigr] \dif{s} \dif{r} \\
&\equiv I_1 + I_2 + I_3.
\end{align*}  
  Using integration by parts as before, we obtain
  \begin{align*}
    I_1= &-\eps\int_{-2\delta}^{2\delta}\int_{\T^1} \zeta (r) f(\tfrac{r}\eps-h_\eps(s,t)) a(s)\kappa(s,t)\dif{s}\dif{r}\\
 &   - \eps\int_{-2\delta}^{2\delta}\int_{\T^1} \zeta' (r) f(\tfrac{r}\eps-h_\eps(s,t)) a(s)(1+ \kappa(s,t)r)\dif{s}\dif{r}\equiv I_{1,1}+I_{1,2},
  \end{align*}
  where  $|I_{1,2}|\leq C_N\eps^N\|a\|_{L^1(\T^1)}$ for any $N\in\N$ as before. By \eqref{eq:Cancelation}, we get
  \begin{align*}
    I_{1,1}+ I_2= -\eps\int_{-2\delta}^{2\delta}\zeta(r) f(\tfrac{r}\eps-h_\eps(s,t))\int_{\T^1} ( a(s)\kappa(s,t)-b(s))\dif{s}\dif{r}=0.
  \end{align*}
  Finally, similarly as before
  \begin{equation*}
    |I_3|\leq C\eps^3 (\|a\|_{L^1(\T^1)}+\|b\|_{L^1(\T^1)})
  \end{equation*}
  since $f\in\SD(\R)$ and
  \begin{equation*}
    \frac{|r^2|}{\eps^2}\leq C\left( \left(\tfrac{r}\eps-h_\eps(s,t)\right)^2 +1 \right)\quad \text{for all } (r,s,t)\in\R\times \T^1\times [0,T], \eps\in (0,\eps_0]. 
  \end{equation*}
  This finishes the proof.
\end{proof}

\subsection{Spectral Estimate}

For this section, we consider $\Omega\subseteq \R^2$ as a bounded smooth domain and $\Gamma_t\subseteq \Om$, $t\in [0,T_0]$, $T_0>0$, as smoothly evolving compact $C^\infty$-hypersurfaces. These surfaces divide $\Om$ into disjoint domains $\Omega^+(t)$ and $\Omega^-(t)$. We assume for all $x\in \Om$: \begin{equation*}
  \begin{aligned}
    c_\mathrm{A}(x)           & = c_{\mathrm{A},\leq 1}(x)+ \eps^2c_{\mathrm{A},1+}(x),                                                                                                                                                   \\
    c_{\mathrm{A}, \leq 1}(x) & = \zeta \circ d_{\Gamma} \theta_{0}(\rho)+\left(1-\zeta \circ d_{\Gamma}\right)\left(\chi_{\Omega^+ (t)}-\chi_{{\Omega^- (t)}}\right)                                                                     \\
                              & +\varepsilon\lambda_0\left[\zeta \circ d_{\Gamma} \theta_{1}(\rho)+\left(1-\zeta \circ d_{\Gamma}\right)\left(\frac{1}{f''(1)}\chi_{{\Omega^+ (t)}}+\frac{1}{f''(-1)}\chi_{{\Omega^- (t)}}\right)\right],
  \end{aligned}
\end{equation*}
where $\zeta\in C^\infty(\R)$ fulfills the assumption in \eqref{eq:1.34}. Here $\theta_1\in C^1(\mathbb{R})\cap L^\infty(\mathbb{R})$ is the unique solution to
\begin{align}\label{eq:theta1}
  - \theta_1''(\rho)+ f''(\theta_0(\rho))\theta_1(\rho) & =1-\frac{2}{\sigma} \theta_0^{\prime}(\rho) \quad \text {for all }\rho\in \mathbb{R}\text{ with }
                                                          \theta_1(0)  =0,\, 
\end{align}
where $\sigma\coloneqq \int_{\mathbb{R}} \theta_0^{\prime 2}(\rho)\dif \rho$ as before.
We note that $\theta_1$ is an even function since $\theta'_0$ and $f''(\theta_0)$ are even. This yields the important relation %Integrating $\theta_0^{\prime \prime} \mathcal{L} \theta_1$ by parts over $\mathbb{R}$, one can verify that
\begin{equation}
  \label{eq:ortho}
\int_\mathbb{R}\theta_0'(\rho)^2 f'''(\theta_0(\rho))\theta_1(\rho)\dif \rho=0,  
\end{equation}
since $f'''(\theta_0)$ is odd.
 We maintain that $\dist(\Gamma_t,\partial\Omega)> 2\delta$ for all $t\in [0,T_0]$. For given continuous functions $(\tilde{h}_\eps)_{0<\eps< 1} \colon \G\to \R$ with $\Gamma\coloneqq \bigcup_{t\in[0,T_0]} \Gamma_t \times {{t}}$, we already defined the stretched variable as $
  \rho = \frac{d_{\G}(x,t)}{\eps}- h_\eps(s,t)$, where $h_\eps(s,t)= \tilde{h}_\eps(X_0(s,t),t)$. Additionally, we require: \begin{equation*}
  \sup_{\eps \in (0,1)}\left(\sup_{(p,t)\in \Gamma} |\tilde{h}_\eps (p,t)|+ \sup_{x\in\Om, t\in[0,T_0]} |c_{\mathrm{A},1+} (x,t)|\right)\leq M
\end{equation*} These results will be applied to $\tilde{h}_\eps (p,t)= h_\eps(X_0^{-1}(p,t),t)$ for some $h_\eps\colon \T^1\times [0,T_0]\to \R$. The following spectral estimate from \cite{PhDMarquardt} plays a central role in proving convergence. The preceding requirements differ from the ones in \cite{AbeFei23} by the approximate solution $c_\mathrm{A}$, which will be further discussed in the next section and in the Appendix \ref{Appendix}.

\begin{thm}\label{thm:Spectral}
  Let $c_\mathrm{A}$ and $M>0$ be as above and $\alpha=f(\pm 1)$. Then there are some $C_L,\eps_0>0$, which are independent of $\tilde{h}_\eps, c_\mathrm{A}$, such that for every $\psi\in H^1(\Om)$, $t\in[0,T_0]$, and $\eps\in (0,\eps_0]$ we have
  \begin{align*}
    &\int_{\Om}\left(|\nabla\psi(x)|^2+ \frac{f''(c_\mathrm{A}(x,t))}{\eps^2}\psi^2(x)\right)\dif{x}\\
    &\quad \geq -C_L\int_\Om \psi^2 \dif{x} + \int_{\Om\setminus \Gamma_t(\delta)}|\nabla \psi|^2\dif{x} +  \int_{\Gamma_t({\delta})} |\nabla_\btau \psi|^2\dif{x}.
  \end{align*}
\end{thm}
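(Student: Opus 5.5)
The plan is to reduce the statement to the known spectral estimate for the Allen-Cahn operator linearized at the leading-order profile, as in Chen's work and \cite[Theorem 2.11]{AbeFei23}, and then to absorb the $O(\eps)$ correction $\eps\lambda_0\theta_1$, which is the new ingredient here. We write $c_{\mathrm{A}}=c_{\mathrm{A},0}+\eps\lambda_0 c_{\mathrm{A},1}+\eps^2c_{\mathrm{A},1+}$ with $c_{\mathrm{A},0}=\zeta\circ d_\Gamma\,\theta_0(\rho)+(1-\zeta\circ d_\Gamma)(\chi_{\Omega^+}-\chi_{\Omega^-})$. Taylor expansion, together with the growth bound on $f'''$ and the uniform bound $M$, gives
\[
f''(c_{\mathrm{A}})=f''(c_{\mathrm{A},0})+\eps\lambda_0 f'''(c_{\mathrm{A},0})c_{\mathrm{A},1}+O(\eps^2).
\]
The $O(\eps^2)$ part contributes only $-C\int\psi^2$ after division by $\eps^2$. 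Outside $\Gamma_t(\delta)$ we have $f''(c_{\mathrm{A}})\ge \tfrac12\min f''(\pm1)>0$ for small $\eps$, because $c_{\mathrm{A}}$ is $O(\eps)$-close to $\pm1$ there (in $\Gamma_t(2\delta)\setminus\Gamma_t(\delta)$ the profile $\theta_0(\rho)$ is exponentially close to $\pm1$). Hence the potential term is nonnegative there and the full $\int_{\Om\setminus\Gamma_t(\delta)}|\nabla\psi|^2$ survives. The first reduction therefore concerns the strip $\Gamma_t(\delta)$ only. There we split $|\nabla\psi|^2=|\partial_\no\psi|^2+|\nabla_\btau\psi|^2$ and use local coordinates $(r,s)$ with Jacobian $J$; a cutoff/IMS localization handles the transition at $|r|=\delta$.

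Next, for fixed $s$, we study the one-dimensional quadratic form
\[
q_s(\phi)=\int_{-\delta}^{\delta}\Bigl(|\partial_r\phi|^2+\eps^{-2}f''(\theta_0(\tfrac r\eps-h_\eps)) \phi^2+\eps^{-1}\lambda_0 f'''(\theta_0)\theta_1\,\phi^2\Bigr)J\,dr .
\]
After rescaling $r=\eps(\rho+h_\eps)$, the operator $L_0=-\partial_\rho^2+f''(\theta_0)$ has kernel spanned by $\theta_0'$ and a spectral gap $\nu_0>0$ on its orthogonal complement. We decompose $\phi=a(s)\,\theta_0'(\rho)/\|\theta_0'\|+\phi^\perp$. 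The leading part yields $q_s\ge \eps^{-2}\nu_0\|\phi^\perp\|^2-C\|\phi\|^2$, where the Jacobian and cutoff errors are of lower order, exactly as in the equal-profile case.

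The main obstacle is the $\eps^{-1}$ perturbation $\eps^{-1}\lambda_0 f'''(\theta_0)\theta_1$. A crude bound would only give $-C\eps^{-1}\int\psi^2$, which is not acceptable. Here the orthogonality \eqref{eq:ortho} is decisive. Inserting the decomposition, the diagonal term in $a^2$ carries $\int\theta_0'^2f'''(\theta_0)\theta_1\,d\rho$, which vanishes, up to an $O(\eps)$ error coming from $J$ and the cutoff, so that after the prefactor $\eps^{-1}$ it is $O(1)\,a^2$. The cross term is bounded by $C\eps^{-1}|a|\,\|\phi^\perp\|\le \tfrac{\nu_0}{2}\eps^{-2}\|\phi^\perp\|^2+Ca^2$, and the $\phi^\perp\phi^\perp$ term is $C\eps^{-1}\|\phi^\perp\|^2$, which is absorbed by the gap for small $\eps$. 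Hence $q_s(\phi)\ge -C\int\phi^2J\,dr$, uniformly in $s$, $t$ and $h_\eps$ with $|h_\eps|\le M$; one also checks that the norms of $\theta_0'$ with weight $J$ on $(-\delta,\delta)$ differ only by exponentially small amounts.

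Finally we integrate the bound on $q_s$ over $s$ and add back the tangential part $\int_{\Gamma_t(\delta)}|\nabla_\btau\psi|^2$, which was left untouched. Combined with the exterior estimate, this gives the claim with some constant $C_L$. All constants depend only on $f$, $\Gamma$, $\delta$, $M$ and $\lambda_0$, and not on the particular $\tilde h_\eps$ or $c_{\mathrm{A},1+}$.
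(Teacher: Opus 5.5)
Your proposal is correct and essentially matches the paper: the paper's proof only cites Marquardt's thesis (Eq.~(3.75)), and the one hypothesis it checks there, Assumption~3.1, is the orthogonality \eqref{eq:ortho}, which is exactly what you use to reduce the diagonal part of the $\eps^{-1}\lambda_0 f'''(\theta_0)\theta_1$ perturbation to $O(1)$ in your fiberwise decomposition along $\theta_0'$. Your other steps are the standard Chen-type ingredients of that cited argument: positivity of $f''(c_\mathrm{A})$ off $\Gamma_t(\delta)$, the uniform spectral gap of the $J$-weighted one-dimensional operator, and absorbing the cross term into the $\eps^{-2}$ gap via Young's inequality.
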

\begin{proof}
  The estimate is shown in detail in \cite[Equation (3.75)]{PhDMarquardt}. We note that Assumption 3.1 in this thesis is satisfied due to \eqref{eq:ortho}.
\end{proof}

The following decomposition of $\psi$ from \cite[Corollary~2.12]{AbelsMarquardt1} is important throughout the following analysis.
\begin{cor}
  \label{cor:SpectralDecomp}
 Let the previous assumptions hold true and let
$t\in\left[0,T\right]$, let $\psi\in H^{1}(\Gamma_{t}(\delta))$
and $\Lambda_{\eps}\in\mathbb{R}$ be such that
\begin{equation}
\label{def Lambda energy}
\int_{\Gamma_{t}(\delta)}\eps\left|\nabla\psi(x)\right|^{2}+\eps^{-1}f''\left({c_{A}}(x,t)\right)\psi(x)^{2}\dif{x}\leq\Lambda_{\eps}
\end{equation}
and denote $I_{\eps}^{s,t}\coloneqq\left(-\frac{\delta}{\eps}-h_{{\eps}}(s,t),\frac{\delta}{\eps}-h_{{\eps}}(s,t)\right)$.
Then, for $\eps>0$ small enough, there exist functions $Z\in H^{1}(\mathbb{T}^{1})$,
$\psi^{\mathbf{R}}\in H^{1}(\Gamma_{t}(\delta))$
and smooth $\Psi\colon I_{\eps}^{s,t}\times\mathbb{T}^{1}\to \R$ such that
\begin{equation}
\psi\left(X(r,s,t)\right)=\eps^{-\frac{1}{2}}Z(s)\left(\beta(s)\theta_{0}'(\rho(r,s))+\Psi(\rho(r,s),s)\right)+\psi^{\mathbf{R}}(r,s)\label{decompose u}
\end{equation}
for almost all $\left(r,s\right)\in\left(-\delta,\delta\right)\times\mathbb{T}^{1}$,
where $\rho(r,s)=\frac{r}{\eps}-h_{{\eps}}(s,t)$
and $\beta(s)=\left(\int_{I_{\eps}^{s,t}}\left(\theta_{0}'(\rho)\right)^{2}\sd\rho\right)^{-\frac{1}{2}}$.
Moreover,
\begin{equation}
\Vert \psi^{\mathbf{R}}\Vert _{L^{2}\left(\Gamma_{t}(\delta)\right)}^{2}\leq C\left(\eps\Lambda_{\eps}+\eps^{2}\left\Vert \psi\right\Vert _{L^{2}\left(\Gamma_{t}(\delta)\right)}^{2}\right),\label{f2u estimate-1}
\end{equation}
\begin{equation}\label{f2u estimate-2}
\Vert Z\Vert _{H^{1}\left(\mathbb{T}^{1}\right)}^{2}+\Vert \nabla^{\Gamma}\psi\Vert _{L^{2}(\Gamma_{t}(\delta))}^{2}+\Vert \psi^{\mathbf{R}}\Vert _{H^{1}\left(\Gamma_{t}(\delta)\right)}^{2}\le C\left(\left\Vert \psi\right\Vert _{L^{2}\left(\Gamma_{t}(\delta)\right)}^{2}+\frac{\Lambda_{\eps}}{\eps}\right),
\end{equation}
and
\begin{equation}\label{f1u estimate}
\sup_{s\in\mathbb{T}^{1}}\left(\int_{I_{\eps}^{s,t}}\left(\Psi(\rho,s)^{2}+\Psi_{\rho}(\rho,s)^{2}\right)J\left(\eps(\rho+h_{{\eps}}(s,t)),s\right)\sd\rho\right)\leq C\eps^{2}.%\label{eq:Chenneu4}
\end{equation}
\end{cor}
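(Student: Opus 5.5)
The statement to prove is Corollary~\ref{cor:SpectralDecomp}, a decomposition of $\psi$ near the interface into a multiple of the profile derivative plus remainders. The plan is to reduce to a family of one-dimensional problems along the normal lines and then use the spectral gap of the linearized Allen--Cahn operator.

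\textbf{Step 1: reduction to normal lines.} Work in the coordinates $x=X(r,s,t)$ and the stretched variable $\rho=\frac r\eps-h_\eps(s,t)$. For fixed $s$, write
\begin{equation*}
\phi_s(\rho)=\eps^{\frac12}\psi(X(\eps(\rho+h_\eps),s,t)), \qquad \rho\in I_\eps^{s,t}.
\end{equation*}
Split $|\nabla\psi|^2=|\partial_r\psi|^2+|\nabla^\Gamma\psi|^2$ (the cross terms vanish by orthogonality), and expand the Jacobian as $J=1+O(\eps|\rho|+\eps)$. Then \eqref{def Lambda energy} becomes
\begin{equation*}
\int_{\T^1}\int_{I_\eps^{s,t}}\bigl(|\partial_\rho\phi_s|^2+f''(c_\mathrm{A})\phi_s^2\bigr)J\,\sd\rho\,\sd s+\eps\|\nabla^\Gamma\psi\|^2_{L^2}\le\Lambda_\eps.
\end{equation*}
Since $c_\mathrm{A}=\theta_0(\rho)+\eps\lambda_0\theta_1(\rho)+O(\eps^2)$ near the interface, we have $f''(c_\mathrm{A})=f''(\theta_0)+\eps\lambda_0 f'''(\theta_0)\theta_1+O(\eps^2)$. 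Hence each fiber carries the weighted one-dimensional operator
\begin{equation*}
\mathcal L_{\eps,s}=-J^{-1}\partial_\rho(J\partial_\rho)+f''(c_\mathrm{A})
\end{equation*}
on $I_\eps^{s,t}$ with Neumann-type (natural) boundary conditions.

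\textbf{Step 2: spectral analysis of one fiber.} Following Chen's one-dimensional analysis (as in \cite{AbelsMarquardt1,PhDMarquardt}), I would show that $\mathcal L_{\eps,s}$ has the following structure, uniformly in $s$:
\begin{itemize}
\item a first eigenvalue $\lambda_1=O(\eps^2)$;
\item an eigenfunction $\beta(s)\theta_0'+\Psi(\cdot,s)$, normalized in $L^2(J\sd\rho)$, with $\|\Psi\|_{H^1(J)}\le C\eps$;
\item a gap $\lambda_2\ge\nu>0$.
\end{itemize}
The first-order correction is where \eqref{eq:ortho} enters. The $O(\eps)$ terms $\eps\lambda_0f'''(\theta_0)\theta_1$ and $\eps\kappa$ (from $J'$) perturb the eigenvalue only through $\int\theta_0'^2 f'''(\theta_0)\theta_1\,\sd\rho=0$ and $\int \theta_0''\theta_0'\,\sd\rho=0$. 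This gives $\lambda_1=O(\eps^2)$, and estimate \eqref{f1u estimate} follows via the Rayleigh quotient and the standard resolvent bound on $(\theta_0')^\perp$.

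\textbf{Step 3: defining the decomposition.} Set $Z(s)=\langle\phi_s,\beta\theta_0'+\Psi\rangle_{L^2(J)}$ and let $\psi^{\mathbf R}$ be the corresponding orthogonal remainder, which gives \eqref{decompose u}. On each fiber the spectral gap yields
\begin{equation*}
\nu\|\phi_s^\perp\|^2\le\langle\mathcal L\phi_s,\phi_s\rangle+C\eps^2\|\phi_s\|^2 .
\end{equation*}
Integrating in $s$ and undoing the scaling (the $L^2$ norm in $x$ picks up a factor $\eps$) gives \eqref{f2u estimate-1}.

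\textbf{Step 4: $H^1$ bounds, the main obstacle.} For \eqref{f2u estimate-2} one needs $Z\in H^1(\T^1)$, which requires differentiating the eigenfunctions in $s$, since $h_\eps$, $J$ and $c_\mathrm{A}$ all depend on $s$. I would handle this in four parts:
\begin{itemize}
\item Perturbation theory for isolated eigenvalues shows that $\partial_s(\beta\theta_0'+\Psi)$ is bounded in $L^2$ with constants depending on the $C^1$ bounds of $h_\eps$.
\item Differentiating the definition of $Z$ then gives $|Z'|\lesssim\|\partial_s\phi_s\|+\|\phi_s\|$.
\item Rescaling, $\|\partial_s\phi_s\|$ is controlled by $\|\nabla^\Gamma\psi\|_{L^2}$.
\item The energy \eqref{def Lambda energy} bounds $\eps\|\nabla^\Gamma\psi\|^2$ up to the indefinite term. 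Combining Step 2 with the lower bound of Theorem~\ref{thm:Spectral} localized to $\Gamma_t(\delta)$ bounds that term by $C\|\psi\|^2$, giving $\|\nabla^\Gamma\psi\|^2\le C(\|\psi\|^2+\Lambda_\eps/\eps)$.
\end{itemize}
The $H^1$ bound for $\psi^{\mathbf R}$ then follows from $\psi^{\mathbf R}=\psi-\eps^{-1/2}Z(\beta\theta_0'+\Psi)$, together with the $\partial_\rho$-energy bound on the orthogonal part. The delicate points are tracking the uniformity of these bounds in $s$ and absorbing the $O(\eps^{-1/2})$ factors correctly.
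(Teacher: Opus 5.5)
Your outline is correct and is essentially the argument the paper relies on. The paper gives no proof of its own: it imports the decomposition from \cite[Corollary~2.12]{AbelsMarquardt1}, which rests on exactly this fiberwise analysis of the weighted one-dimensional Neumann operator (first eigenfunction $\beta\theta_0'+\Psi$, first eigenvalue $O(\eps^2)$, uniform gap), and the only model-specific input the paper checks is \eqref{eq:ortho}, which is precisely what you use to remove the $O(\eps)$ shift of the first eigenvalue.

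Two small points. First, with $\phi_s=\eps^{1/2}\psi\circ X$ the normal part of the energy equals $\eps^{-1}\int_{\T^1}\int_{I_\eps^{s,t}}(|\partial_\rho\phi_s|^2+f''(c_\mathrm{A})\phi_s^2)J\,\sd\rho\,\sd s$. So your Step~1 display and the ``factor $\eps$'' remark in Step~3 hold only for $\phi_s=\psi\circ X$, while your formula for $Z$ needs the $\eps^{1/2}$. Second, differentiating the exact eigenfunction in $s$ in Step~4 needs a bound on the tangential derivative of $\eps^2c_{\mathrm{A},1+}$, which is not among the stated assumptions; projecting onto $\beta\theta_0'$ itself, i.e.\ taking $\Psi\equiv0$, avoids this.
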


As in \cite[Section~2.5]{AbeFei23}  we introduce $\eps$-dependent norms for $u\in H^1(\Gamma_t(2\delta))$ 
  \begin{align*}
    \|u\|_{X_\eps} = &\inf\left\{ \|Z\|_{H^1(\Gamma_t)}+ \|v\|_{H^1(\Gamma_t(2\delta))}+\eps^{-1}\|v\|_{L^2(\Gamma_t(2\delta))}:\right.\\
    &\quad \qquad \left. \tilde{u}(\rho,s)= Z(s) \eps^{-\frac12} \theta_0'(\rho) + \tilde{v}(r,s), Z\in H^1(\Gamma_t), v\in H^1(\Gamma_t(2\delta)) \right\}.
  \end{align*}
  We remark that
  \begin{equation*}
    \|u\|_{L^2(\Gamma_t(2\delta))}\leq C\|u\|_{X_\eps}\quad\text{for all }u\in H^1(\Gamma_t(2\delta)), t\in[0,T],\eps \in (0,\eps_0].
  \end{equation*}
  Then, if $\Lambda_\eps$ is as in \eqref{def Lambda energy},} we obtain
  \begin{equation*}
    \|u\|_{X_\eps}^2+\|\nabla^\Gamma u \|_{L^2(\Gamma_t(2\delta))}^2 \leq C\left(\int_{\Gamma_t(2\delta)} \left(|\nabla u|^2 +\frac1{\eps^2} f''(c_A(x,t))u^2\right)\, dx + \|u\|_{L^2(\Gamma_t(\delta))}^2 \right).
  \end{equation*}
  Here we replaced $\delta$ by $2\delta$ for consistency in the following. Since the size of $\delta>0$ does not matter, this can always be done.
We refer to \cite[Section~2.5]{AbeFei23} for more details.

\section{Construction of the Approximate Solutions}\label{sec:ApproxSolutions}

In this section we will construct suitable approximate solutions to the mass-conserved Navier-Stokes/Allen-Cahn system, which is a key ingredient in the proof of our main result. We will follow the strategy and arguments in \cite[Section~3]{AbeFei23}. But the construction and proofs have to be adapted carefully to the case of a mass-conserved Allen-Cahn equation since several new terms occur. The main result of this section is summarized in:
\begin{thm}\label{thm:approx}
  Let $M>0$, $\eps_0 >0$, and  $T_\eps \in (0,T_0]$ for all $\eps\in (0,\eps_0)$ be given. Then for smooth $\ue=\ue_\eps\colon [0,T_\varepsilon)\times\Omega \to \R^2$, depending on $\eps \in (0,\eps_0)$ and satisfying 
  \begin{equation*}
    \|\ue \|_{L^2(0,T_\eps; H^1(\Omega))}\leq M\qquad \text{for all }\eps \in (0,\eps_0),
  \end{equation*}
  there are smooth $c_\mathrm{A},p_\mathrm{A}\colon \Omega\times [0,T_\eps]\to \R$, $\ve_\mathrm{A}\colon \Omega\times [0,T_\eps]\to \R^2$ and $\lambda_\mathrm{A} \colon [0,T_\varepsilon] \to \R$ such that
  \begin{alignat}{2}\label{eq:ApproxS1}
    \partial_t \ve_\mathrm{A}\NSt{+ \ve_\mathrm{A}\cdot \nabla \ve_\mathrm{A}} -\Div (2\nu(c_\mathrm{A})D \ve_\mathrm{A}) +\nabla p_\mathrm{A} & = -\eps \Div (\nabla c_\mathrm{A}\otimes \nabla c_\mathrm{A}) + R_\eps^1+  R_\eps^2,
    \\\label{eq:ApproxS2}
    \Div \ve_\mathrm{A}                                                                                                                        & =\, 
    G_\eps,                                                                                                                                                                                                                                     \\ \label{eq:approxAC}
    \partial_t c_\mathrm{A} + \ve_\mathrm{A}\cdot \nabla c_\mathrm{A}+\eps^{N+\frac12}\zg\ue|_{\Gamma} \cdot \nabla c_\mathrm{A}                  & = \Delta c_\mathrm{A} -\frac1{\eps^2} f'(c_\mathrm{A})+\frac{1}{\varepsilon}\lambda_\mathrm{A} 
    + S_\eps,                                                                                                                                                                                                                                   \\
    (\ve_\mathrm{A},\partial_\mathbf{n}c_\mathrm{A})|_{\partial\Omega}                                                                         & = (0,0),                                                                                       \\\label{eq:ApproxS5}
    \frac{\dif}{\dif {t}}\int_\Omega c_\mathrm{A}(x,t) \dif x                                                                                       & = \bar{S}_\eps,
  \end{alignat}
  where
  \begin{alignat}{2}\nonumber
    \|R_\eps^1\|_{L^2(0,T_\eps; (H^1(\Omega)^2)'))}                                   & \leq {C'}\|\ue|_{\Gamma}\|_{L^2(0,T_\varepsilon;L^2(\Gamma_t))} \eps^{N+\frac12}, \\\nonumber
    \|R_\eps^2\|_{L^2(0,T_\eps; (H^1(\Omega)^2)'))}                                   & \leq C(M) \eps^{N+1},                                                             \\\label{eq:SepsEstim1}
    \|G_\eps\|_{H^{1/2}(0,T_\eps; L^2(\Omega))}+\|S_\eps\|_{L^2(0,T_\eps; (X_\eps)'\cap L^1(\Gamma_t(2\delta))} & \leq C(M)\eps^{N+1},                                                              \\\nonumber
    \|S_\eps\|_{L^2(0,T_\eps; L^2(\Omega))}                                           & \leq C(M)\eps^N,\\\nonumber
    \|\bar{S}_\eps\|_{L^\infty(0,T_\eps)}&\leq C(M)\eps^{N+\frac32} 
  \end{alignat}
  for some $C(M)$, $C'>0$ independent of $\eps, T_\eps$. 
    Moreover, $M\mapsto C(M)$ is increasing.
\end{thm}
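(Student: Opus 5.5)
We build the approximate solution $(c_\mathrm{A},\ve_\mathrm{A},p_\mathrm{A},\lambda_\mathrm{A})$ by matched asymptotic expansions, following \cite[Section~3]{AbeFei23} and adapting the Allen-Cahn part to the mass-conserving case as in \cite{ChenHilhorstLogak}. In the outer regions $\Omega^\pm$ we take expansions $c^\pm=\pm1+\sum_{k\ge1}\eps^k c_k^\pm$, $\ve^\pm=\sum_k\eps^k\ve_k^\pm$, $p^\pm=\sum_k \eps^k p_k^\pm$. In $\Gamma(2\delta)$ we use inner expansions $\hat c(\rho,x,t)=\sum_{k\ge0}\eps^k \hat c_k$ (and likewise $\hat\ve,\hat p$) in the stretched variable $\rho$ of \eqref{eq:StretchedVariable}, with $h_\eps=\sum_{k\ge0}\eps^k h_{k+1}$ and $\lambda_\mathrm{A}=\sum_k\eps^k\lambda_k$. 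The pieces are glued with $\zeta\circ d_\Gamma$ exactly as in the ansatz for $c_{\mathrm{A},\le1}$ of Section~\ref{sec:Prelim}. Lemma~\ref{lem:ChainRule} expresses all derivatives in inner variables. Collecting powers of $\eps$ yields at each order $k$ a linear ODE
\[
-\partial_\rho^2\hat c_k+f''(\theta_0)\hat c_k=\text{(known data)}
\]
together with linear Stokes-type problems for $(\ve_k^\pm,p_k^\pm)$ carrying jump conditions on $\Gamma_t$.

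At order $0$ we obtain $\theta_0$ and the limit system \eqref{eq:Limit1}--\eqref{eq:Limit6}. The volume-preserving term fixes $\lambda_0=-\frac{\sigma}{2}\fint_{\Gamma_t}H\,dH^1$; here we use the solvability condition (orthogonality to $\theta_0'$) and $\int_\R\theta_0'=2$. This produces the order-one term $\eps\lambda_0\theta_1$ with $\theta_1$ from \eqref{eq:theta1}, and the outer correction $\lambda_0/f''(\pm1)$. This is the new feature compared with \cite{AbeFei23}. At higher orders $k\ge1$, the solvability conditions give linearized evolution equations for $h_k$. These are parabolic equations on the evolving curve, driven by the Stokes corrections and coupled to the unknown $\lambda_k$. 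We close the system by requiring that the mass of the truncated expansion be constant up to order $k$, which gives a nonlocal constraint fixing $\lambda_k$. The convective term $\eps^{N+\frac12}\zg\ue|_\Gamma\cdot\nabla c_\mathrm{A}$ enters only at the half order $N+\frac12$. As in \cite{AbeFei23}, we absorb it by adding $\eps^{N-\frac12}h_{N+\frac12}$ to $h_\eps$, where $h_{N+\frac12}$ solves a linear parabolic equation on $\Gamma$ forced by $\ue|_\Gamma\cdot\no$. Its bounds in terms of $\|\ue\|_{L^2H^1}\le M$ explain the $C(M)$ dependence and the $R^1_\eps$ term.

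The remainders are then estimated. $S_\eps$ and $R^2_\eps$ consist of terms in $\mathcal R_{N+1,\alpha}$ (or better), terms in $\mathcal R^0_{N,\alpha}$, and terms supported in $\delta\le|d_\Gamma|\le2\delta$ that are exponentially small. Lemma~\ref{lem:rescale} gives the $L^2$ bound $\eps^N$ and the $L^1$ and $(H^1)'$ bounds. For the $X_\eps'$ bound, which needs order $N+1$, we test against $Z\eps^{-1/2}\theta_0'+v$. On the $\theta_0'$ component, solvability at order $N$ makes the leading projection vanish. For the momentum equation, Lemma~\ref{lem:DivergenceFreeRemainder} gains $\eps^{3/2}$ on terms of the form $\no\otimes\no$, and $G_\eps$ is handled by a Bogovskii-type correction as in \cite{AbeFei23}.

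The main obstacle is \eqref{eq:ApproxS5}, namely $|\bar S_\eps|\le C\eps^{N+\frac32}$. Integrating \eqref{eq:approxAC} shows that $\bar S_\eps$ equals the integral of $\frac1\eps\lambda_\mathrm{A}-\frac1{\eps^2}f'(c_\mathrm{A})+S_\eps$ minus the convective contributions. Naive estimates lose powers of $\eps$. We plan to choose the $\lambda_k$ and the mean of $h_k$ so that the order-$k$ terms have the cancellation structure \eqref{eq:Cancelation}. Lemma~\ref{lem:MeanEstim} then gains $\eps^3$ relative to the pointwise size, and its first part gives $\eps^2$ for the remaining inner terms. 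The mass of the half-order correction $h_{N+\frac12}$ also has to be controlled; for this we give it zero weighted mean, by modifying it with a constant on $\Gamma_t$ that is absorbed into $\lambda$. We expect this bookkeeping, together with the nonvanishing $O(\eps)$ term $\theta_1$, which changes the compatibility conditions at each order, to be the delicate step.
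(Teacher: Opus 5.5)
Your overall architecture follows the paper: outer and inner expansions glued by $\zeta\circ d_\Gamma$, the new order-one profile $\eps\lambda_0\theta_1$ with outer values $\lambda_0/f''(\pm1)$, $\lambda_k$ fixed by mass conservation as in \cite{ChenHilhorstLogak}, and a half-order shift $h_{N+\frac12}$ absorbing $\eps^{N+\frac12}\zg\ue|_\Gamma\cdot\nabla c_\mathrm{A}$. You also use orthogonality to $\theta_0'$ for the $X_\eps'$-bound and Lemma~\ref{lem:MeanEstim} for the mass. The gap is in the step you yourself flag as delicate.

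\textbf{The constant shift does not close.} You propose to shift $h_{N+\frac12}$ by a time-dependent constant $m(t)$ so that $\int_{\Gamma_t}h_{N+\frac12}\,d\mathcal{H}^1=0$, and to absorb the constant into $\lambda$. The interface operator in \eqref{eq:PDE of h} is $D_t+\ve_0\cdot\nabla_\Gamma-\Delta_\Gamma+g_0$. Here $g_0|_\Gamma=-\partial_\no(\partial_t d_\Gamma+\ve_0\cdot\nabla d_\Gamma-\Delta d_\Gamma)$ is in general not constant on $\Gamma_t$. Hence $h-m$ solves the equation with the extra source $-m'(t)-g_0m(t)$. Only $-m'$ can be put into $\lambda$, so $-\eps^{N-\frac12}g_0m\,\theta_0'(\rho)$ remains in $S_\eps$. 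This term has a nonzero $\theta_0'$-component: testing with $\eps^{-\frac12}Z\theta_0'(\rho)$ gives about $\sigma\eps^N\int_{\T^1}g_0mZ\,ds$. Since $h$ is driven by $\no\cdot\ue$, $m\neq0$ in general, so $S_\eps$ is only $O(\eps^N)$ in $X_\eps'$.

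\textbf{Missing compensating terms.} Separately, any half-order piece $\eps^{N+\frac12}\lambda_{N+\frac12}$ in $\lambda_\mathrm{A}$ creates $\eps^{N-\frac12}\lambda_{N+\frac12}$ in all of $\Omega$. It must be balanced by $\eps^{N+\frac32}\lambda_{N+\frac12}\theta_1(\rho)$ inside and $\eps^{N+\frac32}\lambda_{N+\frac12}/f''(\pm1)$ outside, cf.\ \eqref{eq:DefcA} and \eqref{eq:innerexpan'}. By \eqref{eq:theta1}, this leaves only $-\frac2\sigma\eps^{N-\frac12}\lambda_{N+\frac12}\theta_0'$. Without these terms, $\|S_\eps\|_{L^2(0,T_\eps;L^2(\Omega))}$ is merely $O(\eps^{N-\frac12})$, and the $X_\eps'$-bound fails as well.

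\textbf{The paper's remedy.} The constant enters as a source, not as a shift. One puts $\frac2\sigma\lambda_{N+\frac12}(t)$ on the right-hand side of \eqref{eq:PDE of h}, where it absorbs the leftover $\theta_0'$-component above. Then $\lambda_{N+\frac12}$ is fixed by the nonlocal constraint \eqref{eq:lambdaN12}. This gives a parabolic problem with a lower-order nonlocal term, solvable as in \cite{StokesAllenCahn} with $\|h\|_{X_{T_\eps}}\le C\|\ue|_\Gamma\|_{L^2(0,T_\eps;L^2(\Gamma_t))}$.

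Constraint \eqref{eq:lambdaN12} says that the leading-order mass $\approx2\eps^{N+\frac12}\int_{\Gamma_t}h\,d\mathcal{H}^1$ of the correction is stationary, and $h|_{t=0}=0$. So your target condition is the right one; it just has to be imposed through the source. The same holds at integer orders. The means of the $h_k$ are not free parameters; the $\lambda_k$ are, and they enter the $h_k$-equations as sources together with their $\theta_1$-profiles.

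\textbf{Estimating $\bar S_\eps$.} It is easier to differentiate the explicit $c_\mathrm{A}$ than to integrate \eqref{eq:approxAC}. In the latter, the convective term alone is $\approx2\eps^{N+\frac12}\int_{\Gamma_t}\ue\cdot\no\,d\mathcal{H}^1$, which is not small. In \eqref{eq:Expdtu}, the pair $\eps^{N-\frac32}\partial_td_\Gamma\theta_0''(\rho)h+\eps^{N-\frac12}\theta_0'(\rho)\partial_t^\Gamma h$ satisfies \eqref{eq:Cancelation} precisely because of \eqref{eq:lambdaN12}. The second part of Lemma~\ref{lem:MeanEstim} then gives $O(\eps^{N+\frac32})$ for this pair, and the first part handles the remaining terms. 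Finally, $\int_\Omega\partial_t\tc_\mathrm{A}\,dx=O(\eps^{N+2})$ by the choice of the $\lambda_k$.
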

We note that the residual terms are derived and estimated similarly as in \cite{AbeFei23}, where the non-mass-conserving case is treated (with Dirichlet instead of Neumann boundary conditions). However, for the approximate solutions modifications as in \cite{ChenHilhorstLogak} are employed here. We construct approximate solutions of the Navier-Stokes/Allen-Cahn system in the following form
\begin{alignat}{1}\label{eq:DefcA}
  c_\mathrm{A} (x,t)            & =\zg c_\mathrm{A}^{\text{in}}(x, t)+\left(1-\zg\right)\left(c_\mathrm{A}^{+} \chi_{+}+c_\mathrm{A}^{-} \chi_{-}+\eps^{N+\frac32} \tfrac{\lambda_{N+\frac12}(t)}{f''(\pm1)}\right)
  \\
  \label{eq:DefvA}
  \ve_\mathrm{A}(x,t)           & =\zg \ve_\mathrm{A}^{\mathrm{in}}(x,t)+(1-\zg )\left(\ve_\mathrm{A}^{+}(x,t)
  \chi_+ +\ve_\mathrm{A}^-(x,t)\chi_-\right){- {\mathbf{N}}\bar{a}_\eps(t)},
  \\
  \label{eq:DefpA}
  p_\mathrm{A}(x,t)             & =\zg p_\mathrm{A}^{\mathrm{in}}(x,t)+(1-\zg )
  \left(p_\mathrm{A}^{+}(x,t)
  \chi_+ +p_\mathrm{A}^{-}(x,t)\chi_- \right),
\end{alignat}
where $\mathbf{N}\colon \Omega \to \R^2$ is a smooth vector field such that $\mathbf{N}|_{\partial \Omega}= \no_{\partial\Omega}$ and $\bar{a}_\eps\colon (0,T)\to \R$ is some suitable function related to the compatibility condition
    \begin{equation*}
      \int_\Omega \operatorname{div} \ve_\mathrm{A} \,\dif x\, = \int_{\partial\Omega} \no_{\partial\Omega}\cdot   \ve_\mathrm{A} \,d\sigma = 0.
    \end{equation*}
The approximation solutions are constructed from an outer and inner expansion. The underlying concept of the ansatzes is to derive an expression for a solution at a positive distance from the interface and also along the diffuse interface. The two expansions are then assembled into an approximate solution in the whole domain. The ansatzes of the outer expansions (denoted with $\pm$) are
\begin{equation}
  c_\mathrm{A}^\pm(x,t)  =\sum_{k=0}^{N+2}\eps^{k}c_{k}^{\pm}(x,t), \  \ \mathbf{v}_\mathrm{A}^\pm(x,t) =\sum_{k=0}^{N+2}\eps^{k}\mathbf{v}_{k}^{\pm}(x,t),\ \  p_\mathrm{A}^\pm(x,t) = \sum_{k=-1}^{N+2}\eps^{k}p_{k}^{\pm}(x,t)
\end{equation}
and for the mass term
\begin{equation}\label{eq:DeflambdaA}
  \lambda_{\mathrm{A}}(t)=\tl_{\mathrm{A}}(t)+ \eps^{N+\frac12} \lambda_{N+\frac12}(t),\quad   \tl_{\mathrm{A}}(t)=\sum_{k=0}^{N+1} \varepsilon^k \lambda_k(t)
\end{equation}
with an upper limit that is one less than the other quantities. Here $\lambda_k(t)$ and $\lambda_{N+\frac12}(t)$ are independent of $x\in\Omega$.
It will be crucial that we use the following ansatz
\begin{equation}\label{eq:innerexpan'}
  \begin{aligned}
    c_\mathrm{A}^{\mathrm{in}} (x,t)   & = \tc_\mathrm{A}^{\mathrm{in}}(\rho,s,t) +\eps^{N+\frac32} \theta_1(\rho)\lambda_{N+\frac12}(t)                                                                                \\
                                       & \ +\eps^{N-\frac12}\left( \theta_0'(\rho)+
    \eps\partial_\rho \hat{c}_1(\rho,x,t)+
    \eps^2 \partial_\rho \hat{c}_2(\rho,x,t)\right)h_{N+\frac12}(S(x,t),t),
    \\
    \ve_\mathrm{A}^{\mathrm{in}} (x,t) & =\tv_\mathrm{A}^{\mathrm{in}}(\rho,x,t) {+ \eps^{N+\frac12}\hat{\we}(\rho,x,t)}, \\
    p_\mathrm{A}^{\mathrm{in}} (x,t)   & =\tp_\mathrm{A}^{\mathrm{in}}(\rho,x,t){+\eps^{{N-\frac12}}\hat{q}(\rho,x,t)}
  \end{aligned}
\end{equation}
for the inner expansions of $c_\mathrm{A}$, $\ve_\mathrm{A}$, and $p_\mathrm{A}$,
Here $\rho=\rho_\eps$ is as in \eqref{eq:StretchedVariable} with
\begin{equation}\label{eq:heps}
  h_\eps(s,t):= \sum_{k=0}^N \eps^k h_{k+1}(s,t)\qquad \text{for all }s\in \T^1, t\in [0,T_0].
\end{equation}
Moreover, a function $h_{N+\frac12}\colon \T^1\times [0,T_0]\to \R$, which may depend on $\eps$, is used such that $h_{N+\frac12}\in X_T$ is bounded (with respect to $\eps$),
where
\begin{equation}\label{eq:2.0}
  X_T=L^2(0,T;H^{5/2}(\T^1))\cap H^1(0,T;H^{1/2}(\T^1))
\end{equation}
normed by $$
  \|h\|_{X_T}\coloneqq\|h\|_{L^2(0,T;H^{5/2}(\T^1))}+\|h\|_{H^1(0,T;H^{1/2}(\T^1))}+ \|h|_{t=0}\|_{H^{3/2}(\T^1)}.$$
We note that $\lambda_{N+\frac12}$ will in the following determined in dependence of $h_{N+\frac12}$.
Moreover, $\theta_1$ is as in \eqref{eq:theta1}. We note that $\lim_{\rho \to \pm \infty}\theta_1(\rho)= \frac1{f''(\pm1)}$.

 Furthermore, the following standard ansatz for the first terms $\tc_\mathrm{A}^{\mathrm{in}}(\rho,x,t)$, $\tv_\mathrm{A}^{\mathrm{in}}(\rho,x,t)$ and $\tp_\mathrm{A}^{\mathrm{in}}(\rho,x,t)$ is used:
\begin{align}\label{eq:AnsatzVin}
  \tc_\mathrm{A}^{\mathrm{in}}(\rho,x,t) & = \sum_{k=0}^{N+2}\eps^k \tc_k(\rho,x,t), \\
  \tv_\mathrm{A}^{\mathrm{in}}(\rho,x,t) & = \sum_{k=0}^{N+2}\eps^k \tv_k(\rho,x,t), \\
  \tp_\mathrm{A}^{\mathrm{in}}(\rho,x,t) & =\sum_{k=-1}^{N+1}\eps^k \tp_k(\rho,x,t).
\end{align}
The functions $\tc_k$, $\tv_k$, and $\tp_k$ are smooth and maintain independence from $\eps$. The terms $\hat{\we}$ and $\hat{q}$ will be determined at a later stage, with $\hat{\we}$ satisfying the constraint $\no\cdot \hat{\we}=0$ throughout $\Gamma(2\delta)$. The expressions $\tc_\mathrm{A}^{\mathrm{in}}$, $\tv_\mathrm{A}^{\mathrm{in}}$, and $\tp^{\mathrm{in}}_\mathrm{A}$ follow the formulation in \eqref{eq:innerexpan'}, but with the condition $(h_{N+\frac12}, \hat{\we}, \hat{q})$ set to zero. Similarly, $(\tc_\mathrm{A}, \tv_\mathrm{A}, \tp_\mathrm{A}, \hat{\lambda}_\mathrm{A})$ represent the corresponding approximate solutions in $\Omega$ under the same zero condition for $(h_{N+\frac12}, \hat{\we}, \hat{q})$.
We note that the proposed ansatz of $c_\mathrm{A}^\mathrm{in}$ differs from the preceding work \cite{AbeFei23}, where $\hat{c}_1 \equiv 0$, but in this work we need $\hat{c}_1  \not\equiv 0$ as in \cite{ChenHilhorstLogak}.  
Because of this new term, the ansatz and the analysis of the error terms need a significant adaptation compared to the non-mass-conserving case.

The structure of the proof is as follows: First we construct $(\tc_\mathrm{A}, \tv_\mathrm{A}, \tp_\mathrm{A}, \hat{\lambda}_\mathrm{A})$ together with $h_\eps$, which are defined as in \eqref{eq:DefcA}-\eqref{eq:DefpA} without the terms depending on $h_{N+\frac12}$ and $\lambda_{N+\frac12}$, and \eqref{eq:DeflambdaA}, such that they satisfy \eqref{eq:ApproxS1}-\eqref{eq:ApproxS5} without the term $\eps^{N+\frac12}\zg \ue|_{\Gamma} \cdot \nabla c_\mathrm{A}$ and suitable estimates for the remainder terms, cf.\ Theorem~\ref{thm:Approx1} below. 
Afterwards, we determine $h_{N+\frac12}(S(x, t), t)$ and $\lambda_{N+\frac12}$ and its associated terms
        such that they yield $\eps^{N+\frac12}\ue|_{\Gamma} \cdot \nabla c_\mathrm{A}^{\mathrm{in}}$ (in equation \eqref{eq:approxAC}) up to lower-order terms, as detailed in Theorem~\ref{thm: h_N+1/2}. Here $\lambda_{N+\frac12}$ is chosen to ensure mass-conservation upto an error of order $O(\eps^{N+\frac32})$.
Finally, we select $\hat{\we}$ and $\hat{q}$ to eliminate the principal error term in the Navier-Stokes equation's right-hand side, ensuring all additional terms contribute only $O(\eps^{N+\frac12})$ in $L^2(0,T;(H^1(\Omega)^2)')$.
The initial step follows standard methodology. The result is summarized as follows:
\begin{thm}\label{thm:Approx1}
  For any $N\in \N$ there are smooth $(\tc_\mathrm{A}^{\mathrm{in}},\tv_\mathrm{A}^{\mathrm{in}},\tp_\mathrm{A}^{\mathrm{in}})$ defined in $\Gamma(2\delta)$ together with $h_\eps$ as in \eqref{eq:heps}, $\tl_\mathrm{A}$ defined in $[0,T_0]$, and $(c_\mathrm{A}^\pm, \ve_\mathrm{A}^\pm, p_\mathrm{A}^\pm)$  defined on $\Omega^\pm$, which are smooth and are approximate solutions in the following sense:
  \begin{enumerate}
    \item \emph{Inner expansion:} In $\Gamma(2\delta)$ we have
          \begin{alignat}{2}\nonumber
            \partial_t \tv_\mathrm{A}^{\mathrm{in}}\NSt{+ \tv_\mathrm{A}^{\mathrm{in}}\cdot \nabla \tv_\mathrm{A}^{\mathrm{in}}} -\Div (2\nu(\tc_\mathrm{A}^{\mathrm{in}})D \tv_\mathrm{A}^{\mathrm{in}}) +\nabla \tp_\mathrm{A}^{\mathrm{in}} & = -\eps \Div (\nabla \tc_\mathrm{A}^{\mathrm{in}}\otimes \nabla \tc_\mathrm{A}^{\mathrm{in}}) + R_\eps^{\mathrm{in}},
            \\\nonumber
            \Div \tv_\mathrm{A}^{\mathrm{in}}                                                                                                                                                                                                                                      & =\,   
            G_\eps^{\mathrm{in}},                                                                                                                                                                                                                                                                                                                                                                                                                              \\ \label{eq: remainder inner approx}
            \partial_t \tc_\mathrm{A}^{\mathrm{in}} + \tv_\mathrm{A}^{\mathrm{in}}\cdot \nabla \tc_\mathrm{A}^{\mathrm{in}}                                                                                                                                            & = \Delta \tc_\mathrm{A}^{\mathrm{in}} -\frac1{\eps^2} f'(\tc_\mathrm{A}^{\mathrm{in}})+\frac{1}{\varepsilon}\tl_\mathrm{A}+ S_\eps^{\mathrm{in}},
          \end{alignat}
          where
          \begin{alignat}{2}\label{eq:RemainderApprox1}
            \|(R_\eps^{\mathrm{in}}, \partial_t G_\eps^{\mathrm{in}}, S_\eps^{\mathrm{in}})\|_{L^\infty((0,T_0)\times\Omega)} & \leq C\eps^{N+1}, \\\label{eq:RemainderApprox2}
            \|G_\eps^{\mathrm{in}} \|_{L^\infty((0,T_0)\times\Omega)}                              & \leq C\eps^{N+2}.
          \end{alignat}
    \item \emph{Outer expansion:}  In $\Omega^\pm$ we have
          \begin{alignat}{2}
            \partial_t \ve_\mathrm{A}^\pm\NSt{+ \ve_\mathrm{A}^\pm\cdot \nabla \ve_\mathrm{A}^\pm} -\nu^\pm \Delta \ve_\mathrm{A}^\pm +\nabla p_\mathrm{A}^\pm & = R^\pm_\eps,
            \\\nonumber
            \Div \ve_\mathrm{A}^\pm                                                                                                                            & =\, 0,                                \\
            \ve_\mathrm{A}^\pm{|_{\partial\Omega}}                                                                                                             & =\, \ol{a}_\eps \no_{\partial\Omega},
          \end{alignat}
          where $\ol{a}_\eps\colon (0,T)\to \R$ is continuous and $ \|R_\eps^\pm \|_{L^\infty(\Omega\times [0,T_0])}\leq C\eps^{N+2}$  for all $\eps\in (0,1)$. For the concentration, we have for $t\in[0,T_0]$:
          \begin{equation*}
            \partial_t c_\mathrm{A}^{\pm}=-\frac{1}{\varepsilon^2}f'(c_\mathrm{A}^\pm)+\frac{1}{\varepsilon}\tl_\mathrm{A}(t)+{S}^\pm_\varepsilon \quad\text{ with $ \|S_\eps^\pm \|_{L^\infty(\Omega\times [0,T_0])}\leq C\eps^{N+1}$.}
          \end{equation*}
    \item \emph{Matching condition:} We have
          \begin{equation*}
            \begin{split}
                \| \p_x^{\beta}( \tv_\mathrm{A}^{\mathrm{in}}-\ve_\mathrm{A}^{+}
              \chi_+-\ve_\mathrm{A}^{-}\chi_-)\|_{L^\infty(\Gamma(2\delta)\setminus \Gamma(\delta) )}&\leq C e^{-\frac{\alpha\delta}{2\eps}}, \\
                \| \p_x^{\beta}(  \tp_\mathrm{A}^{\mathrm{in}}-p_\mathrm{A}^{+}
              \chi_+-p_\mathrm{A}^{-}\chi_-)\|_{L^\infty(\Gamma(2\delta)\setminus \Gamma(\delta) )}&\leq C e^{-\frac{\alpha\delta}{2\eps}},   \\
               \|   \p_x^{\beta}(\tc_\mathrm{A}^{\mathrm{in}}-c_\mathrm{A}^{+}
              \chi_+-c_\mathrm{A}^{-}\chi_-)\|_{L^\infty(\Gamma(2\delta)\setminus \Gamma(\delta) )}&\leq C e^{-\frac{\alpha\delta}{2\eps}}
            \end{split}
          \end{equation*}
          for all $\eps \in (0,1)$ and $\beta\in \N_0^n$.
          \item \emph{Approximate conservation of mass:} If $\tc_\mathrm{A}$ is defined as in \eqref{eq:DefcA} with $h_{N+\frac12}\equiv \lambda_{N+\frac12}\equiv0$, we have
          \begin{equation}\label{eq:meantcEstim}
            \int_{\Omega} \partial_t \tc_\mathrm{A}^\mathrm{in}(x,t) \dif{x}={O}(\varepsilon^{N+2})\quad \text{in }L^\infty(0,T_0).
          \end{equation}

  \end{enumerate}
\end{thm}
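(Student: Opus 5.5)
The proof is a standard matched asymptotic expansion, combining \cite[Section~3 and Appendix]{AbeFei23} for the Navier--Stokes part with \cite[Section~4]{ChenHilhorstLogak} for the mass-conserving Allen--Cahn part. We insert the outer ansatz into the equations in $\Omega^\pm$ and the inner ansatz \eqref{eq:AnsatzVin}, written in the stretched variable $\rho$, into the equations in $\Gamma(2\delta)$. The inner equations are expanded with Lemma~\ref{lem:ChainRule}, and $\Delta d_\Gamma$, $|\nabla^\Gamma h_\eps|^2$ and similar coefficients are Taylor expanded in $r=\eps(\rho+h_\eps)$. We then collect powers of $\eps$ from $\eps^{-2}$ (or $\eps^{-1}$ for the fluid equations) up to $\eps^{N}$. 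Each order yields a linear problem. For the concentration this is the ODE $-\partial_\rho^2\hat c_k+f''(\theta_0)\hat c_k=\text{(known terms)}+\lambda_{k-1}$. For velocity and pressure in the normal variable we obtain the ODE system of \cite{AbeFei23}. Outside the interface, $\lambda_{k-1}$ determines $c_k^\pm$ algebraically, e.g.\ $c_1^\pm=\lambda_0/f''(\pm1)$, which matches the $\theta_1$ term. The fluid corrections $\ve_k^\pm,p_k^\pm$ solve linearized two-phase Stokes/Navier--Stokes problems, with jump data supplied by the inner solvability conditions.

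The solvability of each inner ODE is governed by the Fredholm alternative: the right-hand side must be $L^2(\R)$-orthogonal to $\theta_0'$. At order $\eps^{-1}$, with $h_1$ given by the geometry, this orthogonality produces \eqref{eq:Limit5} together with $\lambda_0=\tfrac{\sigma}{2}\fint_{\Gamma_t}H$, and \eqref{eq:theta1} yields $\hat c_1=\lambda_0\theta_1$ (up to multiples of $\theta_0'$). This $\hat c_1\neq0$ is the new feature compared with \cite{AbeFei23}. At higher orders the orthogonality condition gives a linear parabolic equation on $\Gamma_t$ for $h_{k}$, coupled to the outer Stokes correction through the jump conditions, with an unknown constant $\lambda_{k}$. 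We fix $\lambda_k$ by requiring conservation of $\int_\Omega\tc_\mathrm{A}$ at the corresponding order. This is the argument of \cite{ChenHilhorstLogak}: the $\lambda_k$ enters linearly through $\theta_1$ and through the mean of the $h$-equation, and the resulting condition has the form of \eqref{eq:Cancelation}. Solvability of the coupled linear problem for $(h_k,\ve_k^\pm,p_k^\pm)$ follows as in \cite{AbeFei23}, because the nonlocal mean term is a compact, lower-order perturbation. The boundary data $\bar a_\eps\no_{\partial\Omega}$ arise because the divergence constraints force a flux correction, exactly as in \cite{AbeFei23}.

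Matching conditions come from prescribing the limits $\rho\to\pm\infty$ of the inner terms to equal the Taylor expansions of the outer terms at $\Gamma$. Because $\theta_0'$ and the other profiles decay exponentially, with all corrections lying in the classes $\mathcal R_{k,\alpha}$, we obtain the $O(e^{-\alpha\delta/(2\eps)})$ estimates on $\Gamma(2\delta)\setminus\Gamma(\delta)$. Truncating the expansions after order $N+2$ leaves remainders that are smooth and of order $\eps^{N+1}$ in $L^\infty$ for $R^{\mathrm{in}}_\eps,S^{\mathrm{in}}_\eps$. The divergence remainder is one order better, since $\Div\tv_\mathrm{A}^{\mathrm{in}}$ carries an extra factor of $\eps$. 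In the outer region the remainders are $\eps^{N+2}$ for the velocity, while $\tl_\mathrm{A}$ is truncated at $N+1$, which gives only $\eps^{N+1}$ for $S^\pm_\eps$.

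The hardest step is item~4, the mass estimate \eqref{eq:meantcEstim}. We would compute $\int_\Omega\partial_t\tc_\mathrm{A}$ by splitting into the $\zg$-weighted inner integral and the outer integral. Integrating the inner equation and using the Neumann condition, the chain rule produces terms of the form $\zg f'(\rho)a(s)$ and $\eps\zg f(\rho)b(s)$, with $f$ a Schwartz profile (for example $\theta_0'$, or $\theta_0$ minus its limits). The choice of $\lambda_k$ at each order is made exactly so that the cancellation \eqref{eq:Cancelation} holds for the leading pair. Lemma~\ref{lem:MeanEstim} then upgrades the naive bound to $O(\eps^3)$ times the prefactor, which lands at $O(\eps^{N+2})$. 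Getting the bookkeeping right, so that the new $O(\eps)$ term $\hat c_1$ interacts correctly with the Jacobian correction $\kappa r$, is the delicate part; it is also where \eqref{eq:ortho} is used.
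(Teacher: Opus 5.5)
Your overall scheme is the paper's (Appendix~\ref{Appendix}). Outer terms are algebraic with $c_1^\pm=\lambda_0/f''(\pm1)$, $\hat c_1=\lambda_0\theta_1$ via \eqref{eq:theta1}, Fredholm solvability gives the $h_k$-equations, the Navier--Stokes part comes from \cite{AbeFei23}, and $\lambda_k$ comes from mass conservation as in \cite{ChenHilhorstLogak}. However, your argument for item~4 does not work. Lemma~\ref{lem:MeanEstim} controls one pair $\zg f'(\rho)a(s)+\eps\zg f(\rho)b(s)$ with $f\in\SD(\R)$, under the single constraint \eqref{eq:Cancelation}. But $\int_\Omega\partial_t\tc_\mathrm{A}\dif x$ has contributions at every order $\eps^0,\dots,\eps^{N+1}$ that are not of this form:
\begin{itemize}
\item $-(\eps^{-1}V+\partial_t^\Gamma h_\eps)\partial_\rho\hat c$ involves all of $h_1,\dots,h_{N+1}$, all profiles $\partial_\rho\hat c_k$, and the Jacobian to all orders, not just $1+\kappa r$;
\item $\partial_t\hat c_k$ tends to $\partial_t c_k^\pm(t)$ and does not decay in $\rho$;
\item the bulk term $(1-\zg)\partial_t c_\mathrm{A}^\pm$ contributes; already at first order, $\partial_t c_1^\pm=\lambda_0'(t)/f''(\pm1)$ enters over all of $\Omega$.
\end{itemize}
So the $\eps^k$-coefficient of the mass is not $\int(a\kappa-b)\dif s$ for a single pair. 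Imposing \eqref{eq:Cancelation} on a ``leading pair'' does not make it vanish, and one application of the lemma with a prefactor only helps after all lower orders have been cancelled exactly.

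What the paper does (Theorem~\ref{lem: coeff lambda}, following \cite[Section~5.4]{ChenHilhorstLogak} with convection terms added) is expand $\frac{\dif}{\dif t}\int_\Omega\tc_\mathrm{A}\dif x$ completely in powers of $\eps$. It then \emph{defines} $\lambda_0,\dots,\lambda_{N+1}$ by requiring the coefficients of $\eps^0,\dots,\eps^{N+1}$ to vanish; $\lambda_k$ enters through the solvability equation for $h_k$ and through $c_{k+1}^\pm$, $\hat c_{k+1}$. Then \eqref{eq:meantcEstim} is just the truncation error. Your own sentence ``we fix $\lambda_k$ by requiring conservation of $\int_\Omega\tc_\mathrm{A}$ at the corresponding order'' is the right mechanism. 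You need to carry out that expansion instead of invoking Lemma~\ref{lem:MeanEstim}, which the paper uses only for the half-integer correction $(h_{N+\frac12},\lambda_{N+\frac12})$ in Step~4 of the proof of Theorem~\ref{thm: h_N+1/2}. Also, \eqref{eq:ortho} plays no role here; it is the hypothesis for Theorem~\ref{thm:Spectral}. The expansion only uses parity: evenness of $\hat c_1$ gives $\int_\R(\tilde B^1+\tfrac12 f'''(\theta_0)\hat c_1^2)\theta_0'\dif\rho=0$.

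A secondary problem concerns item~3. You Taylor expand the coefficients in $r=\eps(\rho+h_\eps)$ and match $\rho\to\pm\infty$ limits of profiles in $(\rho,s,t)$ against Taylor expansions of the outer terms at $\Gamma$. This is harmless for the concentration, since the $c_k^\pm$ depend only on $t$. If you match $\ve$ and $p$ this way, however, their inner profiles grow polynomially in $\rho$. On $\Gamma(2\delta)\setminus\Gamma(\delta)$ the difference to $\ve_\mathrm{A}^\pm$, $p_\mathrm{A}^\pm$ is then a Taylor remainder of size $O(|d_\Gamma|^m)$, not $O(e^{-\alpha\delta/(2\eps)})$. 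The exponential estimates need the $x$-dependent ansatz \eqref{eq:AnsatzVin} with the matching condition \eqref{eq: matching cond c}, i.e.\ profiles converging to the outer solution at the same point $x$. The solvability condition must then hold at every $(x,t)\in\Gamma(2\delta)$, not only on $\Gamma$. The paper secures this by adding $\eps g_\eps\theta_0'(\rho)(d_\Gamma-\eps(\rho+h_\eps))$, which vanishes where $\rho=d_\Gamma/\eps-h_\eps$, with the $g_k$ defined off $\Gamma$ via Lemma~\ref{lem: extension interface} (e.g.\ $g_0$ in \eqref{eq:g0}). If you take the fluid construction of \cite{AbeFei23} verbatim this is inherited. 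In that case your matching description should be restricted to the concentration, and the claim that item~3 follows from exponential decay of the profiles relies on exactly this ansatz.
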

\begin{proof}
  Details of the proof are given in the appendix.
\end{proof}

\begin{rem}\label{rem: inner approx}Let us denote the errors of the inner expansions by $u_\mathrm{A}^{\mathrm{in}}\coloneqq c_\mathrm{A}^{\mathrm{in}}-\tc_\mathrm{A}^{\mathrm{in}}$ and $\mathbf{w}_\mathrm{A}^{\mathrm{in}}\coloneqq\mathbf{v}_\mathrm{A}^{\mathrm{in}}-\tv_\mathrm{A}^{\mathrm{in}}$. Then 
  \begin{align}
     & \partial_{t} c_\mathrm{A}^{\mathrm{in}}+\mathbf{v}_\mathrm{A}^{\mathrm{in}} \cdot \nabla c_\mathrm{A}^{\mathrm{in}}-\Delta c_\mathrm{A}^{\mathrm{in}}+\frac{1}{\varepsilon^{2}} f^{\prime}\left(c_\mathrm{A}^{\mathrm{in}}\right) - \varepsilon^{-1}\lambda_\mathrm{A}(t)\nonumber                                                                                        \\
     & =\partial_{t} \tc_\mathrm{A}^{\mathrm{in}}+\tv_\mathrm{A}^{\mathrm{in}} \cdot \nabla \tc_\mathrm{A}^{\mathrm{in}}-\Delta \tc_\mathrm{A}^{\mathrm{in}}+\frac{1}{\varepsilon^{2}} f^{\prime}\left(\tc_\mathrm{A}^{\mathrm{in}}\right)-\varepsilon^{-1}\lambda_\mathrm{A}(t)+\partial_{t} u_\mathrm{A}^{\mathrm{in}}\nonumber         \\
     & \quad+\mathbf{v}_\mathrm{A}^{\mathrm{in}} \cdot \nabla u_\mathrm{A}^{\mathrm{in}}+\mathbf{w}_\mathrm{A}^{\mathrm{in}} \cdot \nabla \tc_\mathrm{A}^{\mathrm{in}}-\Delta u_\mathrm{A}^{\mathrm{in}}+\frac{1}{\varepsilon^{2}} f^{\prime \prime}\left(\tc_\mathrm{A}^{\mathrm{in}}\right) u_\mathrm{A}^{\mathrm{in}}+\tilde{s}_\mathrm{A}^{\varepsilon}\nonumber \\
     & =\partial_{t} u_\mathrm{A}^{\mathrm{in}}+\mathbf{v}_\mathrm{A}^{\mathrm{in}} \cdot \nabla u_\mathrm{A}^{\mathrm{in}}-\Delta u_\mathrm{A}^{\mathrm{in}}+\frac{1}{\varepsilon^{2}} f^{\prime \prime}\left(\tc_\mathrm{A}^{\mathrm{in}}\right) u_\mathrm{A}^{\mathrm{in}}\label{eq: for h thm}
    +S_{\varepsilon}^{\mathrm{in}}+\tilde{s}_\mathrm{A}^{\varepsilon}+\mathbf{w}_\mathrm{A}^{\mathrm{in}} \cdot \nabla \tc_\mathrm{A}^{\mathrm{in}}
  \end{align}
  in $\Gamma_{t}(2 \delta), t \in\left[0, T_{\varepsilon}\right]$. Here $\tilde{s}_\mathrm{A}^{\varepsilon}$ contains terms that are quadratic in $u_\mathrm{A}^{\mathrm{in}}$ times $\varepsilon^{-2}$ and $S_\varepsilon^\mathrm{in}$ is as in \eqref{eq: remainder inner approx}.
 $\tilde{s}_\mathrm{A}^{\varepsilon}$ has the following properties:
          \begin{enumerate}
            \item $\tilde{s}_\mathrm{A}^{\varepsilon}\in{O}(\varepsilon^{2N-3+\frac{1}{2}}) = {O}(\varepsilon^{N+\frac{1}{2}})$ in $\mathrm{L}^{\infty}(0,T_{\varepsilon};\mathrm{L}^{2}(\Gamma_{t}(2\delta)))$ for $N \geq 3$,
            \item $\tilde{s}_\mathrm{A}^{\varepsilon}\in{O}(\varepsilon^{N+1})$ in $\mathrm{L}^{\infty}(0,T_{\varepsilon};(\mathrm{X}_{\varepsilon})^{\prime})$. 
            \end{enumerate}
            These claims follow from \eqref{eq:RemEstim1}-\eqref{eq:RemEstim2}, with uniform constants when $\|h_{N+\frac{1}{2}}\|_{\mathrm{X}_{T_{\varepsilon}}} \leq M$ for some $M>0$.
Moreover, $\mathbf{w}_\mathrm{A}^{\mathrm{in}} \cdot \nabla \tc_\mathrm{A}^{\text{in}}$ is ${O}(\varepsilon^{N+1})$ in $\mathrm{L}^{\infty}(0,T_{\varepsilon};\mathrm{L}^{2}(\Gamma_{t}(2\delta)))$. This follows directly from Lemma \ref{lem:rescale}, noting that $\mathbf{n} \cdot \hat{\mathbf{w}}(\rho,x,t)=0$ by the previous construction.
\end{rem}

For the following, we denote
\begin{equation}\label{eq:uA}
    u_{\mathrm A}(x, t)  =\zeta_{\Gamma} u^{\text{in}}(x, t)+(1-\zeta_{\Gamma})\eps^{N+\frac32} \frac{\lambda_{N+\frac12}}{f''(\pm1)}.
  \end{equation}
  Then $c_{\mathrm A}= \tc_{\mathrm A}+ u_{\mathrm A}$.
An essential estimate for the first terms in the latter remark is obtained in:
\begin{thm}\label{thm: h_N+1/2}
  For given $T_{\varepsilon} \in\left(0, T_{0}\right]$, $M>0$, $\mathbf{u}$ be as in Theorem~\ref{thm:approx}, 
 let $h\equiv h_{N+\frac{1}{2}} \in \mathrm{X}_{T_{\varepsilon}}$ be the solution of
  \begin{equation}\label{eq:PDE of h}
    \begin{aligned}
      D_t h+X_0^*(\ve) \cdot \nabla_{\Gamma} h-\Delta_{\Gamma} h+X_0^*\left(g_0\right) h & =-X_0^*(\mathbf{n}\cdot \mathbf{u})+\frac2{\sigma}\lambda_{N+\frac12}  &  & \text { on } \mathbb{T}^1 \times\left[0, T_{\varepsilon}\right], \\
      \left.h\right|_{t=0}                                                                                                                      & =0                                   &  & \text { on } \mathbb{T}^{1}
    \end{aligned}
  \end{equation}
  where $\lambda_{N+\frac12}\colon (0,T_\eps)\to \R$ is chosen such that  
  \begin{equation}\label{eq:lambdaN12}
    \int_{\T^1}\left(V(s,t)\kappa(s,t) h_{N+\frac12}(s,t)-D_t h_{N+\frac12}(s,t)\right) \dif{s} =0,
  \end{equation}
  and $g_0$ is as in \eqref{eq:g0} below, and
  \begin{equation*}
    R_u\coloneqq \partial_t u_\mathrm{A}+\mathbf{v}_\mathrm{A} \cdot \nabla u_\mathrm{A}-\Delta u_\mathrm{A}+\frac{1}{\varepsilon^2} f^{\prime \prime}\left(\tc_\mathrm{A}\right) u_\mathrm{A}-\eps^{N-\frac12}\lambda_{N+\frac12} +\varepsilon^{N+\frac{1}{2}} \left.\zg\mathbf{u}\right|_{\Gamma} \cdot \nabla c_\mathrm{A}.
  \end{equation*}
 Then, if $\left\|h_{N+\frac{1}{2}}\right\|_{\mathrm{X}_{T_{\varepsilon}}} \leq M$ for some $M>0$, there is some $C(M)>0$ such that for $p=1,2$ 
 \begin{alignat}{1}\nonumber
   \left\|R_u\right\|_{\mathrm{L}^2\left(0, T_{\varepsilon} ; X_{\varepsilon}^{\prime}\right)}+  \left\|R_u\right\|_{\mathrm{L}^2\left(\Omega\times (0, T_{\varepsilon})\setminus \Gamma(2\delta) \right)} &\leq C(M) \varepsilon^{N+1},                \\\nonumber
   \left\|R_u\right\|_{\mathrm{L}^2\left(0, T_{\varepsilon} ; \mathrm{L}^p\left(\Gamma_t(2\delta)\right)\right)} &\leq C(M) \varepsilon^{N-\frac12+ \frac1p},\\
\label{eq:MeanEstimu_A}
\sup_{0\leq t\le T_\eps}\left|\fint_{\Omega}\partial_t u_\mathrm{A}(x,t)\,dx\right|&\leq C(M)\eps^{N+\frac32}.
 \end{alignat}
\end{thm}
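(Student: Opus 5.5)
The plan is to compute $R_u$ explicitly with the chain rule of Lemma~\ref{lem:ChainRule}, applied to the inner part $u^{\mathrm{in}}$. We write
\[
u^{\mathrm{in}}=\eps^{N-\frac12}\bigl(\theta_0'(\rho)+\eps\partial_\rho\tc_1+\eps^2\partial_\rho\tc_2\bigr)h(S(x,t),t)+\eps^{N+\frac32}\theta_1(\rho)\lambda_{N+\frac12}(t)
\]
and insert it into the linear operator $\partial_t+\ve_\mathrm{A}\cdot\nabla-\Delta+\eps^{-2}f''(\tc_\mathrm{A})$. Then we collect the terms by powers of $\eps$.

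\emph{Leading orders.} At order $\eps^{N-\frac52}$ the only contribution is $(-\partial_\rho^2+f''(\theta_0))\theta_0'\,h=0$.

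At order $\eps^{N-\frac32}$ we obtain $\mathcal{L}_0(\partial_\rho\tc_1)h$ plus the terms from $-V\partial_\rho$, $\ve_0\cdot\no\,\partial_\rho$, $-\Delta d_\Gamma\partial_\rho$ and $f'''(\theta_0)\tc_1\theta_0'$. These cancel because $\partial_\rho$ of the $O(\eps)$ inner equation for $\tc_1$ gives exactly this combination; recall that $\tc_1$ solves the equation with the source $\lambda_0$ and $\theta_1$-type terms from the appendix.

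At order $\eps^{N-\frac12}$ we collect the remaining contributions:
\begin{itemize}
\item $\theta_0'\bigl(D_th+X_0^*(\ve)\cdot\nabla_\Gamma h-\Delta_\Gamma h\bigr)$;
\item the $\partial_\rho\tc_2$ terms together with $f'''$ and $f''''$ terms, which produce $\theta_0' X_0^*(g_0)h$ plus a part of $\mathcal{L}_0$-image type that cancels by differentiating the $\tc_2$ equation;
\item the $\lambda_{N+\frac12}$ term, where $\mathcal{L}_0\theta_1=1-\frac2\sigma\theta_0'$ combined with $-\eps^{N-\frac12}\lambda_{N+\frac12}$ leaves $-\frac2\sigma\theta_0'\lambda_{N+\frac12}$;
\item $\eps^{N+\frac12}\ue|_\Gamma\cdot\nabla c_\mathrm{A}=\eps^{N-\frac12}\theta_0'\,\no\cdot\ue+\dots$.
\end{itemize}
By \eqref{eq:PDE of h} the total coefficient of $\theta_0'(\rho)$ at this order vanishes. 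Everything left is of the form $\eps^{N+\frac12}$ times functions in $\mathcal{R}_{0,\alpha}$, or $\eps^{N-\frac12}$ times functions in $\mathcal{R}^0_{0,\alpha}$ (the coefficients of $\Lgrad,\Ldelta,\Lt$ vanish on $\Gamma$). The terms quadratic in $u$ are handled as in Remark~\ref{rem: inner approx}.

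\emph{Norm estimates.} Applying Lemma~\ref{lem:rescale} to these remainders gives the $L^p(\Gamma_t(2\delta))$ bounds: the $L^2$ norm gains $\eps^{1/2}$ and the $L^1$ norm gains $\eps$, which yields $\eps^{N-\frac12+\frac1p}$. Here the coefficients involve $h,\nabla_\Gamma h,\Delta_\Gamma h,D_th$, whose $L^2_tL^2(\T^1)$ norms are bounded by $\|h\|_{X_T}$. For the products we use the embedding $H^{5/2}\hookrightarrow C^1$ together with $\lambda_{N+\frac12}$. The bound $|\lambda_{N+\frac12}|\le C\|h\|_{X_T}$-type control follows from \eqref{eq:lambdaN12}, since $\lambda_{N+\frac12}$ enters \eqref{eq:PDE of h} linearly: we integrate \eqref{eq:PDE of h} over $\T^1$ and solve for $\lambda_{N+\frac12}$.

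For the $X_\eps'$ bound, we test against $Z\eps^{-1/2}\theta_0'+v$. The $\mathcal{R}^0$ terms gain an extra $\eps$ by \eqref{eq:RemEstim1}. The $\theta_0'$-component against the $\eps^{N-\frac12}$ remainders is harmless because the coefficient of $\theta_0'$ was cancelled. Terms in $\Omega\setminus\Gamma(2\delta)$ and the cut-off region $\delta\le|d_\Gamma|\le2\delta$ are controlled by the exponential decay of $\theta_0'$, of $\partial_\rho\tc_k$, and of $\theta_1-\frac1{f''(\pm1)}$. The outer constant $\eps^{N+\frac32}\lambda_{N+\frac12}/f''(\pm1)$ satisfies $\eps^{-2}f''(\pm1)\cdot(\text{it})=\eps^{N-\frac12}\lambda_{N+\frac12}$, which cancels, so the remainder there is $O(\eps^{N+\frac32})$.

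\emph{Mean estimate \eqref{eq:MeanEstimu_A}: the main obstacle.} We compute $\partial_t u_\mathrm{A}$. The leading term is
\[
\eps^{N-\frac32}\bigl(-V-\eps D_th_\eps\bigr)\theta_0''h\,\zg+\eps^{N-\frac12}\theta_0'D_th\,\zg+\dots
\]
The first summand is $\eps^{N-\frac12}f'$-type with $f=\theta_0'$, and it equals $-\eps^{N-\frac32}\partial_\rho(\theta_0')\,Vh$. By the first part of Lemma~\ref{lem:MeanEstim} its integral is naively only $O(\eps^{N+\frac12})$, so an extra order has to be gained.

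To do this we apply the second part of Lemma~\ref{lem:MeanEstim} with $f=\theta_0'$, $a=-Vh$ and $b=D_th$ (after rescaling $\eps^{N-\frac32}$). The cancellation condition \eqref{eq:Cancelation} is exactly \eqref{eq:lambdaN12}. This yields $O(\eps^{N-\frac32+3})=O(\eps^{N+\frac32})$.

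The $\partial_\rho\tc_1,\partial_\rho\tc_2$ corrections contribute terms like $\eps^{N-\frac12}\partial_\rho(\partial_\rho\tc_1)Vh$. These are of first-part type and give $\eps^{N-\frac12}\cdot\eps^2$. The $\theta_1\lambda_{N+\frac12}$ terms are $O(\eps^{N+\frac32})$ directly. Mismatches from $\zg$ are exponentially small.

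The delicate bookkeeping, which I expect to be the hardest step, is checking that no other $O(\eps^{N+\frac12})$ contribution survives. Any such contribution would have to come from $x$-dependence of $\tc_1$ or from $\partial_t^\Gamma h_\eps$. Both are of the form $\eps^{N-\frac12}\partial_\rho(\cdot)$, hence gain $\eps^2$ by the first part of Lemma~\ref{lem:MeanEstim}, so they should be fine.
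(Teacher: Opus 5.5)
Your plan follows the paper's architecture: a chain-rule expansion of $R_u$ via Lemma~\ref{lem:ChainRule}, order-by-order cancellation, the $\theta_0'$-solvability condition enforced by \eqref{eq:PDE of h}, Lemma~\ref{lem:rescale} plus orthogonality for the norms, and the second part of Lemma~\ref{lem:MeanEstim} with \eqref{eq:lambdaN12} for \eqref{eq:MeanEstimu_A}, which is the paper's Step~4.

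\textbf{The gap: order $\eps^{N-\frac32}$ does not cancel.} With $\tc_1=\lambda_0\theta_1$ solving \eqref{eq:2nd derivative of c1}, the $\rho$-derivative of that equation combined with the transport terms leaves
\[
\bigl(\partial_t d_\Gamma+\ve_0\cdot\no-\Delta d_\Gamma-\tfrac{2}{\sigma}\lambda_0\bigr)\theta_0''(\rho)\,h=-g_0\,d_\Gamma\,\theta_0''(\rho)\,h ,
\]
which vanishes only on $\Gamma$.
\begin{itemize}
\item Since $d_\Gamma=\eps(\rho+h_\eps)$, this residual becomes $-\eps^{N-\frac12}g_0(\rho+h_1)\theta_0''(\rho)h$ up to higher order. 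Because $\int_\R\rho\,\theta_0''(\rho)\theta_0'(\rho)\,d\rho=-\frac{\sigma}{2}\neq0$, it has a nonzero $\theta_0'$-projection.
\item Correspondingly, the $\tc_2$-terms do not yield just $g_0\theta_0'h$. The term $-h\,\partial_\rho\tilde B^1$, with $\partial_\rho\tilde B^1$ as in \eqref{eq:B1 tilde}, contributes $g_0(\theta_0'+\rho\theta_0'')h$, whose $\theta_0'$-projection is only $\frac{\sigma}{2}g_0h$.
\item The full $g_0h$ of \eqref{eq:PDE of h} appears only after these two $\rho\theta_0''$-terms cancel each other. This is exactly the paper's point that $\mathcal R_{-\frac32}$ is promoted to order $\eps^{N-\frac12}$.
\end{itemize}
Your two misstatements happen to compensate, but the argument as written does not establish the cancellation. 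An uncancelled non-orthogonal remainder of size $\eps^{N-\frac12}$ would only be $O(\eps^{N})$ in $X_\eps'$.

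\textbf{Orthogonality criterion.} For the same reason, "the coefficient of $\theta_0'$ was cancelled" is too weak. The $X_\eps'$ bound needs $\int_\R a(\rho,s,t)\theta_0'(\rho)\,d\rho=0$ for the whole $\eps^{N-\frac12}$ remainder $a$. Besides $\rho\theta_0''$, two further profiles are not orthogonal to $\theta_0'$:
\begin{itemize}
\item $\theta_0'''$, since $\int_\R\theta_0'''\theta_0'\,d\rho=-\int_\R(\theta_0'')^2\,d\rho$. You must check, as the paper does in Step~3, that the $\theta_0'''|\nabla^\Gamma h_1|^2h$-terms from $-\Delta u_\mathrm{A}$ and from the $\tc_2$-equation cancel.
\item $\partial_\rho^2\tc_1$. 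You must likewise check that the $\partial_\rho^2\tc_1\,h(\partial_t d_\Gamma+\ve_0\cdot\no-\Delta d_\Gamma)$-terms cancel.
\end{itemize}

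\textbf{Two smaller slips.}
\begin{itemize}
\item In $\Omega\setminus\Gamma(2\delta)$ one has $\tc_\mathrm{A}=\pm1+\eps\lambda_0/f''(\pm1)+O(\eps^2)$ with $\lambda_0\neq0$. So the outer constant leaves $\eps^{N+\frac12}\lambda_0\lambda_{N+\frac12}f'''(\pm1)/f''(\pm1)^2$, not $O(\eps^{N+\frac32})$. This is also larger than the $\eps^{N+1}$ claimed there; the paper's proof does not treat this region explicitly either.
\item With your choice $a=-Vh$, $b=D_th$, condition \eqref{eq:Cancelation} reads $\int_{\T^1}(V\kappa h+D_th)\,ds=0$. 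This matches \eqref{eq:lambdaN12} only after fixing a sign that the paper also leaves inconsistent.
\end{itemize}
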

\begin{rem}
  By inserting \eqref{eq:PDE of h} in \eqref{eq:lambdaN12}, we determine $\lambda_{N+\frac12}$:
  $$
  \lambda_{N+\frac{1}{2}}(t) = \frac{\sigma}{2}\int_{\mathbb{T}^1}\Bigl[V(s,t)\kappa(s,t)- \partial_s X_0^\ast(\nabla S\cdot \ve_0)(s,t) + X_0^*(g_0)(s,t)\Bigr]h_{N+\frac{1}{2}}(s,t)+X_0^*(\mathbf{n}\cdot\mathbf{u})(s,t)\,\mathrm{d}s,
  $$
where $|\mathbb{T}^1|=1$ (cf. Section \ref{sec:Prelim}).
Hence \eqref{eq:PDE of h} yields a parabolic partial differential equation for $h$ with an additional non-local term $\int_{\T^1} k(s,t) h(s,t)\dif{s} $ for a suitable smooth function $k$. Solvability of this equation (with $h|_{t=0}=0$) can be shown in the same way as in \cite[Theorem~2.9]{StokesAllenCahn} since the additional nonlocal term is of lower order. Moreover, we have
  \begin{equation*}
    \|h\|_{X_{T_\eps}}\leq C(T_0) \|\ue|_{\Gamma}\|_{L^2(0,T_\eps; L^2(\Gamma_t))}
  \end{equation*}
 for some $C(T_0)>0$ independent of $\ue, T_\eps, \eps$  by the open mapping theorem and an extension of $\ue|_{\Gamma}(t)$ by zero for $t> T_\eps$.
\end{rem}  
The following proof is a modification of the one in \cite[Theorem 3.1]{AbeFei23} and the essential
difference in our derivation is the new term $c_1$ and the mass term $\lambda_0$.

\medskip

\noindent
\begin{proof*}{of Theorem \ref{thm: h_N+1/2}}
  \emph{Step 1: Ansatz and interpretation of $h$.}
  Let $u_{\mathrm{A}}$ be as in \eqref{eq:uA}. The evolution equation \eqref{eq:PDE of h} in the theorem is a parabolic equation on evolving hypersurfaces and it has the unique solution $h\in X_{T_\varepsilon}$ according to Lemma~2.10 in \cite{StokesAllenCahn}.
  The function $h$ describes a normal shift of the interface at order $\varepsilon^{N+\frac12}$, and the corresponding perturbation of the inner profile lies in the approximate kernel direction $\theta_0'$ of the linearized Allen-Cahn operator.
  By choosing $h$ via \eqref{eq:PDE of h}, we enforce the solvability condition in this kernel direction so that the leading-order inner error produces precisely the additional transport term $\varepsilon^{N+\frac12}\mathbf{u}|_\Gamma\cdot\nabla c_\mathrm{A}^\mathrm{in}$ up to lower-order remainders.
  We will calculate all the derivatives ocurring in the mass-conserving Allen-Cahn equation \eqref{eq:NSAC3} step by step and then identify the equation for $h$ as the solvability condition in the $\theta_0'$–direction.

  \smallskip
  
  \noindent  \emph{Step 2: Expansion and order-by-order cancellations.}
  Using Lemma~\ref{lem:ChainRule}, Lemma~\ref{lem:rescale} and the expansion of $h_\varepsilon$ we obtain
    \begin{align}\nonumber
     & \partial_t u_{\mathrm{A}}= 
      \varepsilon^{N-\frac{3}{2}}\zeta_\Gamma\partial_t d_\Gamma \theta_0^{\prime \prime}(\rho) h_{N+\frac{1}{2}}(s, t)-\varepsilon^{N-\frac{1}{2}}\zeta_\Gamma \theta_0^{\prime \prime}(\rho)
      \partial_t^{\Gamma} h_1(r, s, t) h_{N+\frac{1}{2}}(s, t)\\\label{eq:Expdtu}
   &  \ +\varepsilon^{N-\frac{1}{2}}\zeta_\Gamma \theta_0^{\prime}(\rho)
      \partial_t^{\Gamma} h_{N+\frac{1}{2}}(r, s, t)
      +\varepsilon^{N-\frac{1}{2}}\zeta_\Gamma \partial_t d_\Gamma \partial_\rho^2\hat{c}_1(\rho,x,t)h_{N+\frac12}(s,t)
      +O\left(\varepsilon^{N+\frac12+\frac1p}\right)
    \end{align}
  in $\mathrm{L}^{2}\left(0, T_{\varepsilon} ;L^p(\Omega)\right)$, $p=1,2$, where $s=S(x, t), r=d_\Gamma(x, t), \rho=\rho(x, t)$ as before. In a similar manner, we derive
  \[
    \begin{aligned}
      \mathbf v_A\cdot\nabla u_{\mathrm{A}}
      = & \varepsilon^{N-\frac{3}{2}}\zeta_\Gamma \mathbf{n}_{\Gamma_t}\cdot \mathbf{v}_0
      \theta_0^{\prime \prime}(\rho) h_{N+\frac{1}{2}}(s, t)                 \\
        & -\varepsilon^{N-\frac{1}{2}}\zeta_\Gamma \mathbf{v}_0 \cdot \nabla^{\Gamma} h_1(r, s, t)
      \theta_0^{\prime \prime}(\rho) h_{N+\frac{1}{2}}
      +\varepsilon^{N-\frac{1}{2}}\zeta_\Gamma \mathbf{v}_0 \cdot \nabla^{\Gamma} h_{N+\frac{1}{2}}(r, s, t)
      \theta_0^{\prime}(\rho)                                                          \\
        & +\varepsilon^{N-\frac{1}{2}}\zeta_\Gamma\mathbf{v}_0\cdot \mathbf{n}_{\Gamma_t}
      \partial_\rho^2 \hat{c}_1(\rho, x,t) h_{N+\frac 12}(s,t)
      +O\left(\varepsilon^{N+1}\right)
    \end{aligned}
  \]
  in $\mathrm{L}^{2}\left(0, T_{\varepsilon} ;L^2(\Omega)\right)$ as well as
  \[
    \begin{aligned}
      \Delta u_{\mathrm{A}}(x,t)
      = & \varepsilon^{N-\frac{5}{2}}\zeta_\Gamma \theta_0^{\prime \prime \prime}(\rho) h_{N+\frac{1}{2}}(s, t)
      +\varepsilon^{N-\frac{3}{2}}\zeta_\Gamma \Delta d_\Gamma \theta_0^{\prime \prime}(\rho) h_{N+\frac{1}{2}}(s, t)+\varepsilon^{N-\frac{3}{2}}\zeta_\Gamma\partial_\rho^3\hat{c}_1(\rho,x,t) h_{N+\frac{1}{2}}                  \\
          & +\left|\nabla^{\Gamma} h_1(r, s, t)\right|^2 \varepsilon^{N-\frac{1}{2}}\zeta_\Gamma
      \theta_0^{\prime \prime \prime}(\rho) h_{N+\frac{1}{2}}(s, t)
         + \varepsilon^{N-\frac{1}{2}}\zeta_\Gamma\partial_\rho^2\theta_1(\rho)\lambda_{N+\frac12}
        \\
          &+\varepsilon^{N-\frac{1}{2}} \zeta_\Gamma\partial_\rho^3 \hat{c}_2(\rho, x, t) h_{N+\frac{1}{2}}(s, t)    +\varepsilon^{N-\frac{1}{2}} \Delta^{\Gamma} h_{N+\frac{1}{2}} \theta_0^{\prime}(\rho)       \\
          & -\varepsilon^{N-\frac{1}{2}}\zeta_\Gamma\Big(h_{N+\frac{1}{2}} \Delta^{\Gamma} h_1
      +2 \nabla^{\Gamma} h_1 \cdot \nabla^{\Gamma} h_{N+\frac{1}{2}}\Big) \theta_0^{\prime \prime}(\rho)                               \\
          & +\varepsilon^{N-\frac12}\zeta_\Gamma\Delta d_\Gamma (x,t)\partial_\rho^2 \hat{c}_1(\rho,x,t)
      h_{N+\frac12}(s,t) +O\left(\varepsilon^{N+1}\right)
    \end{aligned}
  \]
  in $\mathrm{L}^{2}\left(0, T_{\varepsilon} ;L^2(\Omega)\right)$.
  For the potential term, we calculate the following:
  \begin{align}\nonumber
     & \frac{1}{\varepsilon^2} f^{\prime \prime}\left(\tc_\mathrm{A}\right) u_\mathrm{A}- \eps^{N-\frac12}\lambda_{N+\frac12}
    = \varepsilon^{N-\frac{5}{2}}\zeta_\Gamma f^{\prime \prime}\left(\theta_0(\rho)\right)
    \theta_0^{\prime}(\rho) h_{N+\frac{1}{2}}(s, t)                                                                                                                   \\\nonumber
     & \quad     +\varepsilon^{N-\frac{3}{2}}\zeta_\Gamma\Big[
      f^{\prime \prime \prime}\left(\theta_0(\rho)\right)\theta_0'(\rho) \hat{c}_1(\rho, x, t)
    +f^{\prime \prime}\left(\theta_0(\rho)\right) \partial_\rho \hat{c}_1(\rho, x, t)\Big] h_{N+\frac{1}{2}}(s, t)                                                    \\\nonumber
     & \quad +\varepsilon^{N-\frac{1}{2}}\zeta_\Gamma\Big[
      f^{\prime\prime\prime}(\theta_0(\rho))\hat{c}_2(\cdot)\theta_0^{\prime}(\rho)
    +f^{\prime\prime}(\theta_0(\rho))\partial_\rho\hat{c}_2(\cdot)+ f^{\prime \prime\prime}\left(\theta_0(\rho)\right)\hat{c}_1(\cdot) \partial_\rho \hat{c}_1(\cdot) \\
     & \qquad\quad\quad\nonumber
      +\tfrac12 f^{(4)}(\theta_0(\rho))(\hat{c}_1(\cdot)^2\theta_0^{\prime}(\rho)
    \Big]h_{N+\frac{1}{2}}(s,t)+ \eps^{N-\frac12}\left(f''(\theta_0)\theta_1(\rho)-1\right)\lambda_{N+\frac12} +{O}\left(\varepsilon^{N+1}\right)\label{eq: potential}
  \end{align}
  in $\mathrm{L}^{2}\left(0, T_{\varepsilon} ;L^2(\Omega)\right)$.
  The first square bracket of order $\varepsilon^{N-3/2}$ can be simplified using the ODE determining $\hat{c}_1$,
  \begin{equation}\label{eq:2nd derivative of c1}
    \partial_\rho^2\hat{c}_1-f''(\theta_0)\hat{c}_1
    =\theta_0'(\rho)(\partial_td_\Gamma+\mathbf{v}_0\cdot\nabla d_\Gamma-\Delta d_\Gamma +g_0 d_\Gamma)
    -\lambda_0(t)
    =-(1-2{\sigma}^{-1}\theta_0'(\rho))\lambda_0(t).
  \end{equation}
  Differentiation with respect to $\rho$ yields
  \begin{equation}
    \label{eq:third derivative of c1 eq}
    f^{\prime \prime}\left(\theta_0(\rho)\right) \partial_\rho \hat{c}_1
    +f^{\prime \prime \prime}\left(\theta_0(\rho)\right) \theta_0^{\prime}(\rho) \hat{c}_1
    =\partial_\rho^3 \hat{c}_1-2\sigma^{-1}\lambda_0(t)\theta_0''(\rho).
  \end{equation}
  The second square bracket of order $\varepsilon^{N-1/2}$ is simplified analogously using the ODE for $\hat{c}_2$ in \eqref{eq: c2 ode simplified}:
  \[
    \begin{aligned}
       & f^{\prime\prime\prime}(\theta_0(\rho)) \hat{c}_2(\rho,x,t)\,\theta_0'(\rho)
      + f^{\prime\prime}(\theta_0(\rho))\partial_\rho\hat{c}_2(\rho,x,t)+ f^{\prime\prime\prime}(\theta_0(\rho))\hat{c}_1(\rho,x,t)\partial_\rho\hat{c}_1(\rho,x,t)
      \\&\quad+\tfrac12 f^{(4)}(\theta_0(\rho))(\hat{c}_1(\rho,x,t))^2  \theta_0^{\prime}(\rho)              = \partial_\rho^3\hat{c}_2(\rho,x,t)
      +2\sigma^{-1}\lambda_1(t)\theta_0''(\rho)+\theta_0'''(\rho)|\nabla^\Gamma h_1| ^2    
        -  \partial_\rho \tilde{B}^1.
    \end{aligned}
  \]
  Altogether, the potential term becomes the following:
  \[
    \begin{aligned}
      &\frac{1}{\varepsilon^2} f^{\prime \prime}\left(\tc_\mathrm{A}\right) u_\mathrm{A}-\eps^{N-\frac12}\lambda_{N+\frac12}
      \\
      &\quad =\varepsilon^{N-\frac{5}{2}} f^{\prime \prime}\left(\theta_0(\rho)\right)
      \theta_0^{\prime}(\rho) h_{N+\frac{1}{2}}(s, t)+\varepsilon^{N-\frac{3}{2}}\Big[
        \partial_\rho^3 \hat{c}_1(\rho,x,t)
        -2\sigma^{-1}\lambda_0(t)\theta_0''(\rho)
      \Big] h_{N+\frac{1}{2}}(s, t)                                                  \\
          &\qquad +\varepsilon^{N-\frac{1}{2}}\Big[
        \partial_\rho^3\hat{c}_2(\rho,x,t)
        -2\sigma^{-1}\lambda_1(t)\theta_0''(\rho)+\theta_0'''(\rho)|\nabla^\Gamma h_1| ^2    
        -  \partial_\rho \tilde{B}^1
        \Big]h_{N+\frac{1}{2}}(s,t)                                                \\&
       \qquad+\eps^{N-\frac12}\left(f''(\theta_0)\theta_1(\rho)-1\right)\lambda_{N+\frac12} +{O}\left(\varepsilon^{N+1}\right)
    \end{aligned}
  \]
  in $\mathrm{L}^{2}\left(0, T_{\varepsilon} ;L^2(\Omega)\right)$, where
  \begin{equation*}
  \partial_\rho \tilde{B^1}=\partial_\rho^2 c_1(\rho)(\partial_t d_\Gamma + \mathbf{v}_0\cdot \nabla d_\Gamma+\Delta d_\Gamma-g_0(\rho\theta_0''(\rho)+\theta_0')).  
  \end{equation*}
Collecting all contributions, we obtain
  \begin{align*}
    &\partial_t u_\mathrm{A} + \mathbf{v}_\mathrm{A} \cdot \nabla u_\mathrm{A}
    - \Delta u_\mathrm{A}
    + \frac{1}{\varepsilon^2} f''(\tc_\mathrm{A}) u_\mathrm{A}-\eps^{N-\frac12}\lambda_{N+\frac12}\\
    &\quad =\varepsilon^{N-\frac52}\zeta_\Gamma\mathcal{ R}_{-\frac52}
    +\varepsilon^{N-\frac32}\zeta_\Gamma\mathcal{ R}_{-\frac32}
    +\varepsilon^{N-\frac12}\zeta_\Gamma\mathcal{ R}_{-\frac12}
    +{O}(\varepsilon^{N+1})
  \end{align*}
  in $\mathrm{L}^{2}\left(0, T_{\varepsilon} ;L^2(\Omega)\right)$, where
  \[
    \mathcal R_{-\frac52}(\rho,x,t)
    \coloneqq  \big(-\theta_0'''(\rho)+f''(\theta_0(\rho))\theta_0'(\rho)\big)h_{N+\frac{1}{2}}
  \]
  vanishes due to the optimal profile equation \eqref{eq: optimal profile}, and
  \[
  \begin{aligned}
        &\mathcal R_{-\frac32}(\rho,x,t)
    \coloneqq  \underbrace{\Big(\partial_t d_{\Gamma_t}+\mathbf{n}_{\Gamma_t}\cdot\mathbf{v}_{0}
    -\Delta d_\Gamma-2\sigma^{-1}\lambda_0(t)\Big)}_{=-g_0d_\Gamma}\theta_0''(\rho)h_{N+\frac12}\\
    &\quad = -g_0 \varepsilon (\rho+h_\varepsilon)\theta_0''(\rho)h_{N+\frac12}
    = -g_0 \varepsilon(\rho+h_1)\theta_0''(\rho)h_{N+\frac12}+ O(\varepsilon^{N+1}) \text{ in }\mathrm{L}^{2}\left(0, T_{\varepsilon} ;X_\varepsilon'\right)
  \end{aligned}
  \]
  increases its $\varepsilon$ order to $\mathcal{R}_{-1/2}$. The $h_1$-term becomes zero when tested with $\theta_0'$ and the other one with $-g_0\rho\theta_0''$ cancels out with the opposite term in \eqref{eq:R_-1/2}, in particular $\partial_\rho \tilde{B}^1$ in \eqref{eq:B1 tilde}.

  \medskip
  
  \noindent\emph{Step 3: Solvability condition in the $\theta_0'$–direction and remainder estimates.}
  For the remaining term (with $h\equiv h_{N+\frac{1}{2}}$) we have
  \begin{equation}\label{eq:R_-1/2}
    \begin{aligned}
      \mathcal R_{-\frac12}(\rho,x,t)
      \coloneqq  & \theta_0'(\rho)\Big(\partial_t^\Gamma h+\mathbf v_0\cdot\nabla^\Gamma h-\Delta^\Gamma h- \tfrac{2}\sigma\lambda_{N+\frac12}\Big) 
   \\                & +\theta_0''(\rho)\,h\Big(-\partial_t^\Gamma h_1-\mathbf v_0\cdot\nabla^\Gamma h_1
      +\Delta^\Gamma h_1\Big)
      +2\theta_0''(\rho)\,\nabla^\Gamma h_1\cdot\nabla^\Gamma h                                                              \\
                   & -\theta_0'''(\rho)\,h\,|\nabla^\Gamma h_1|^2 +\partial_\rho^2\hat c_1h\big(\partial_t d_{\Gamma_t}+\mathbf v_0\cdot\mathbf n_{\Gamma_t}
      - \Delta d_\Gamma\big)                                                                                                 \\
                   & +(2\sigma^{-1}\lambda_1(t)\theta_0''(\rho)+\theta_0'''(\rho)|\nabla^\Gamma h_1| ^2    
        -  \partial_\rho \tilde{B}^1)h
    \end{aligned}
  \end{equation}
  with
  \begin{equation}\label{eq:B1 tilde}
  \partial_\rho \tilde{B^1}=\partial_\rho^2 c_1(\rho)(\partial_t d_\Gamma + \mathbf{v}_0\cdot \nabla d_\Gamma-\Delta d_\Gamma)-g_0(\rho\theta_0''(\rho)+\theta_0'),
\end{equation}
where we have used \eqref{eq:theta1}.
  The first line in \eqref{eq:R_-1/2} on $\Gamma$, together with the term $\theta_0' g_0$ coming from $\partial_\rho \tilde{B}^1 $, is replaced by the evolution equation on the hypersurfaces
  \[
    \partial_t^{\Gamma} h_{N+\frac{1}{2}}
    +\mathbf{v}_0\cdot \nabla_{\Gamma} h_{N+\frac{1}{2}}
    -\Delta_\Gamma h_{N+\frac{1}{2}}
    +g_0 h_{N+\frac{1}{2}}-{\frac{2}{\sigma}}\lambda_{N+\frac12}
    =-\mathbf{n} \cdot \mathbf{u}|_{\Gamma}.
  \]
  which is exactly \eqref{eq:PDE of h} written on $\Gamma_t$ since
  \begin{equation*}
  \left.\varepsilon^{N+\frac{1}{2}}\zg \mathbf{u}\right|_{\Gamma} \cdot \nabla c_\mathrm{A}^\mathrm{in} = \left.\varepsilon^{N-\frac{1}{2}}\zg \mathbf{u}\right|_{\Gamma}\cdot \mathbf{n} \theta_0'(\rho)+ O(\eps^{N+1})
  \end{equation*}
  in $\mathrm{L}^{2}\left(0, T_{\varepsilon} ;L^2(\Omega)\right)$. Remember that a function 
$\varphi$ defined on~$\Gamma_t$ is identified with $\varphi \circ X_0(\cdot,t)$ 
on~$\mathbb{T}^1$ via the flow map $X_0 \colon \mathbb{T}^1 \times [0,T_\varepsilon] 
\to \Gamma_t$; the pullback $X_0^*$ makes this 
identification explicit.
  We will discuss that all the other lines in \eqref{eq:R_-1/2} are orthogonal to $\theta_0'$:
  By construction and oddness of $\theta_0$, all terms containing $\theta_0''$ vanish when tested against $\theta_0'$, the first summand in the third line of \eqref{eq:R_-1/2} cancels with its opposite from the last line, the second summand of the third line cancels its opposite in $\partial_\rho \tilde{B}^1$.
 Thus, all terms at order $\varepsilon^{N-1/2}$ can be collected in a remainder $\varepsilon^{N-1/2} a(\rho,x,t)$ with
  \[
    \int_{\mathbb{R}} a(\rho, s, t) \theta_0^{\prime}(\rho) \,d \rho=0
    \quad \text { for all } s \in \mathbb{T}^1,\, t \in\left[0, T_{\varepsilon}\right].
  \]
  We decompose $\varepsilon^{N-1/2}a(\rho,x,t)=f_1(x,t)+f_2(x,t)$ with
  \[
    f_1(x, t)=\varepsilon^{N-\frac{1}{2}} a(\rho, S(x, t), t),\qquad
    f_2(x, t)=\varepsilon^{N-\frac{1}{2}}(a(\rho, x, t)-a(\rho, S(x, t), t))+{O}\left(\varepsilon^{N+1}\right)
  \]
  in $\mathrm{L}^{2}\left(0, T_{\varepsilon} ;X_\varepsilon'\right)$.
  Because of \cite[Lemma~2.11]{AbeFei23} and Lemma~\ref{lem:rescale}, we get
  \[
    \left\|f_1\right\|_{\mathrm{L}^2\left(0, T_{\varepsilon} ; X_{\varepsilon}^{\prime}\right)}
    \leq C \varepsilon^{N+1},\qquad
    \left\|f_1\right\|_{\mathrm{L}^2\left(0, T_{\varepsilon} ; \mathrm{L}^p\left(\Gamma_t(2 \delta)\right)\right)}
    \leq C \varepsilon^{N-\frac12+ \frac1p},
  \]
  and, using \cite[Corollary~2]{StokesAllenCahn},
  \[
    \left\|f_2\right\|_{\mathrm{L}^2\left(0, T_{\varepsilon} ; X_{\varepsilon}^{\prime}\right)}
    \leq C\left\|f_2\right\|_{\mathrm{L}^2\left(0, T_{\varepsilon} ;
        \mathrm{L}^2\left(\Gamma_t(2 \delta)\right)\right)}
    \leq C' \varepsilon^{N+1}.
  \]
  Putting everything together, we obtain
  \[
    \begin{aligned}
      \partial_t u_\mathrm{A}
      +\mathbf{v}_\mathrm{A}\cdot \nabla u_\mathrm{A}
      -\Delta u_\mathrm{A}
      +\frac{1}{\varepsilon^2} f^{\prime \prime}\left(\tc_\mathrm{A}\right) u_\mathrm{A}
       & =-\varepsilon^{N-\frac{1}{2}} \mathbf{n} \cdot \mathbf{u}|_{\Gamma} \theta_0^{\prime}(\rho)
      +\varepsilon^{N-\frac{1}{2}} a(\rho, x, t)+{O}\left(\varepsilon^{N+1}\right)                        \\
       & =-\varepsilon^{N+\frac{1}{2}} \mathbf{u}|_{\Gamma} \cdot \nabla \tc_\mathrm{A}
      +\varepsilon^{N-\frac{1}{2}} a(\rho, x, t)+{O}\left(\varepsilon^{N+1}\right)
    \end{aligned}
  \]
  in $\mathrm{L}^{2}\left(0, T_{\varepsilon} ;X_\varepsilon'\right)$ and $\mathrm{L}^2(\Omega\times (0,T_\eps)\setminus \Gamma(2\delta))$, and the norm estimates for $f_1$ and $f_2$ yield precisely the bounds stated in Theorem~\ref{thm: h_N+1/2} for both $\mathrm{L}^2(0,T_\varepsilon;X_\varepsilon')$ and $\mathrm{L}^2(0,T_\varepsilon;\mathrm{L}^2(\Gamma_t(2\delta)))$.

  \medskip

  \noindent
  \emph{Step 4: Approximate mass conservation:}
  Finally, to show \eqref{eq:MeanEstimu_A} we apply Lemma~\ref{lem:MeanEstim} to \eqref{eq:Expdtu}. More precisely, we apply the first statement of Lemma~\ref{lem:MeanEstim} to the second and fourth term in \eqref{eq:Expdtu}. The second statement is applied to estimate the sum of the first and third term using~\eqref{eq:lambdaN12}.  
  This completes the proof.
\end{proof*}

%%%%%%%%%%%%%%%%%%%%%%%%%%%%%%%%%%%%%%%%%%%%%%%
\medskip
\noindent
\begin{proof*}{of Theorem~\ref{thm:approx}} The estimates for the part in the Navier-Stokes equations, i.e., $R_\eps^1,R_\eps^2, G_\eps$, are done as in the proof of \cite[Theorem 3.1]{AbeFei23}. The choice of $\we$ and $q$ is given there. The remaining task is to provide estimates related to the mass conservation in the Allen-Cahn equation.
  
  In order to accommodate the Allen-Cahn part, we first decompose the terms in
  $$
  \partial_t c_\mathrm{A} + \ve_\mathrm{A}\cdot \nabla c_\mathrm{A}+\eps^{N+\frac12}\zg\ue|_{\Gamma} \cdot \nabla c_\mathrm{A} = \Delta c_\mathrm{A} -\frac1{\eps^2} f'(c_\mathrm{A})+\frac{1}{\varepsilon}\lambda_\mathrm{A}
  + S_\eps
  $$
  into the part with and without $h_{N+1/2}$ after inserting the definition of $c_\mathrm{A}$ and $\mathbf{v}_\mathrm{A}$. As before we denote the terms without $h_{N+1/2}$ as $\tc_\mathrm{A}$ and $\tv_\mathrm{A}$ and the term including $h_{N+1/2}$ by $u_\mathrm{A}$. Then we have
  \begin{equation*}
    \begin{aligned}
      \partial_t c_\mathrm{A} & +\mathbf{v}_\mathrm{A}\cdot \nabla c_\mathrm{A}-\Delta c_\mathrm{A} +\varepsilon^{-2}f'(c_\mathrm{A})-\varepsilon^{-1}\lambda_\mathrm{A}+\varepsilon^{N+1/2}\mathbf{u}|_\Gamma\cdot\nabla c_\mathrm{A}                                                                                                                                                           \\
                              & =
      \partial_t \tc_\mathrm{A}+\tv_\mathrm{A}\cdot \nabla \tc_\mathrm{A}-\Delta \tc_\mathrm{A} +\varepsilon^{-2}f'(\tc_\mathrm{A})-\varepsilon^{-1}\hat\lambda_\mathrm{A}                                                                                                                                                                                    \\
                              &\quad +\partial_t u_\mathrm{A}+\mathbf{v}_\mathrm{A} \cdot \nabla u_\mathrm{A}-\Delta u_\mathrm{A}+\frac{1}{\varepsilon^2} f^{\prime \prime}\left(\tc_\mathrm{A}\right) u_\mathrm{A}-\eps^{N-\frac12}\lambda_{N+\frac12}+\varepsilon^{N+\frac{1}{2}}\left.\zg \mathbf{u}\right|_{\Gamma} \cdot \nabla c_\mathrm{A}.
    \end{aligned}
  \end{equation*}
  The middle line is of order ${O}(\varepsilon^{N+1})$ in $\mathrm{L}^\infty((0,T_\varepsilon)\times \Omega)$ by Theorem \ref{thm:Approx1} and the previous claim. Then the middle line is also of order ${O}(\varepsilon^{N+1})$ in $\mathrm{L}^2(0,T_\varepsilon;X_\varepsilon')$ since: Denote the middle line as $R_\varepsilon$, then
  \begin{align*}
    \|R_\varepsilon\|_{\mathrm{L}^2(0,T_\varepsilon;X_\varepsilon')}\leq C \sqrt{T_\varepsilon}\|R_\varepsilon\|_{\mathrm{L}^\infty(0,T_\varepsilon;\mathrm{L}^2(\Omega))} \leq C \sqrt{T_0|\Omega|}\|R_\varepsilon\|_{\mathrm{L}^\infty(0,T_\varepsilon;\mathrm{L}^\infty(\Omega))}.
  \end{align*}
  The last line is according to the estimates from Theorem \ref{thm: h_N+1/2} of order ${O}(\varepsilon^{N+1})$ in $\mathrm{L}^2(0,T_\varepsilon; X_\varepsilon')$. Overall, we have $S_\varepsilon = {O}(\varepsilon^{N+1})$ in $\mathrm{L}^2(0,T_\varepsilon;X_\varepsilon')$.

  Finally, \eqref{eq:ApproxS5} with the estimate of $\bar{S}_\eps$ follows from \eqref{eq:meantcEstim} and \eqref{eq:MeanEstimu_A}.
\end{proof*}

%%%%%%%%%%%%%%%%%%%%%%%%%%%%%%%%%%%%%%%%%%%%%%%%%%%%%%%%%%%%%%%%%%%%%%%%%%%%%%%%%%%%%%%%%%%%

\section{Proof of Main Result}\label{sec:main result}
\subsection{Preparations}\label{subsec:prep}

In this subsection, we work with two sets of solutions: $(c_\mathrm{A}, \ve_\mathrm{A}, p_\mathrm{A})$ and $(\tc_\mathrm{A},\tv_\mathrm{A}, \tp_\mathrm{A})$, as defined in Section~\ref{sec:ApproxSolutions}. While $(\tc_\mathrm{A},\tv_\mathrm{A}, \tp_\mathrm{A})$ are fully determined, $(c_\mathrm{A}, \ve_\mathrm{A}, p_\mathrm{A})$ remain depending on a parameter $\mathbf{u}$ that will be specified later. For notational simplicity, we will use $\no$ in place of $\no_\Gamma$ throughout this discussion.
Let $\tilde{\we} \colon \Om\times [0,T_0]\to \R^2$ be such that
\begin{equation}\label{velocity decompose}
  \ve_\eps= \ve_\mathrm{A}+ \tilde{\we}.
\end{equation}
Then 
\begin{equation}\label{eq:w}
  \begin{split}
    \partial_t \tilde{\we}{+ \ve_\eps \cdot \nabla \tilde{\we}}-\Div(2\nu(c_\eps) D\tilde{\we})&+\nabla q  =-\eps\Div (\nabla u\otimes^s \nabla c_\mathrm{A})-\eps\Div (\nabla u\otimes  \nabla u) \\&+\Div(2(\nu(c_\eps)-\nu(c_\mathrm{A})) D\ve_\mathrm{A}){- \tilde{\we}\cdot \nabla \ve_\mathrm{A}}- {R_\eps^1-R_\eps^2},\\
    \Div \tilde{\we}                                                                                     & = -G_\eps,                                                                              \\
    \tilde{\we}|_{t=0}                                                                                   & =\ve_{0,\eps} - \ve_{\mathrm{A},0},                                                     \\
    \tilde{\we}|_{\partial\Omega}                                                                        & = 0,
  \end{split}
\end{equation}
for a suitable pressue $q\colon \Om\times [0,T_0]\to \R$. Here $u=c_\eps-c_\mathrm{A}$ as before, $a\otimes^s b=a\otimes b+b\otimes a$ and $R_\eps, G_\eps$ are as in Theorem~\ref{thm:approx}.
For the following we choose
\begin{equation}\label{eq:Defue}
  \ue = \frac{\tilde{\we}}{\eps^{N+\frac12}}\eqqcolon \we \in L^2(0,T_0; H^1(\Omega)^2)\cap L^\infty(0,T_0;L^2_\sigma(\Omega)^2).
\end{equation}
The determination of $\ue$ involves solving a non-linear and non-local evolution equation. The equation's right-hand side contains terms depending on both $c_\mathrm{A}$ and $\ue$, in suitable function spaces. The proof can be done analogously as in \cite[Proof of Lemma~4.2]{StokesAllenCahn}.
By this choice we obtain
\begin{alignat}{2}\label{eq:cA}
  \partial_t c_\mathrm{A}+ \ve_\mathrm{A} \cdot \nabla c_\mathrm{A} + \eps^{N+\frac12}\zg\we|_{\Gamma}\cdot \nabla c_\mathrm{A} & = \Delta c_\mathrm{A}-\frac{f'(c_\mathrm{A})}{\eps^2} +\frac{1}{\varepsilon}\lambda_\mathrm{A}+S_\eps
\end{alignat}
in $\Omega\times (0,T_0)$.
For the following we assume that the estimates
\begin{subequations}\label{assumptions}
  \begin{align}
    \sup_{0\leq t\leq \tau} \|c_\eps(t) -c_\mathrm{A}(t)\|_{L^2(\Omega)} + \|\nabla (c_\eps -c_\mathrm{A}))\|_{L^2(\Omega\times(0,\tau)\setminus \Gamma(\delta))}                         & \leq R\eps^{\order+\frac12}, \\
    \|\nabla_\btau(c_\eps -c_\mathrm{A})\|_{L^2(\Omega\times(0,T_\eps)\cap \Gamma(2\delta))} +\eps\|\partial_\no(c_\eps -c_\mathrm{A})\|_{L^2(\Omega\times(0,\tau)\cap \Gamma(2\delta))} & \leq R\eps^{\order+\frac12}, \\
    \|\nabla(c_\eps -c_\mathrm{A})\|_{L^\infty(0,\tau;L^2(\Omega))}+\|\nabla^2(c_\eps -c_\mathrm{A})\|_{L^2(\Omega\times(0,\tau))}                                                       & \leq R\eps^{\order-\frac32}, \\
    \int_0^\tau\int_{\Gamma_t(\delta)}|\nabla u|^2 +\eps^{-2} f''(c_\mathrm{A}(x,t)) u^2 \dif x \dif t                                                                                   & \leq R^2\eps^{2N+1}.
  \end{align}
\end{subequations}
hold true for some $\tau=\tau(\eps) \in (0,T_0]$, $\eps_0\in (0,1]$, and all $\eps \in (0,\eps_0]$. Here $R=R(\theta)>0$ is chosen such that
\begin{equation}\label{initial assumption-0}
  \|c_{0,\eps}-c_{\mathrm{A},0}\|_{L^2(\Omega)}^2+ \varepsilon^4\|\nabla(c_{0,\eps}-c_{\mathrm{A},0})\|_{L^2(\Omega)}^2+\|\ve_{0,\eps}-\ve_{\mathrm{A},0}\|_{L^2(\Omega)}^2\leq \theta^2 \frac{R^2}{4}\eps^{2\order+1}e^{-C_LT_0}
\end{equation}
for all $\eps\in (0,1]$, where $C_L>0$ is the constant from Theorem~\ref{thm:Spectral} and $\theta \in (0,1]$ is a suitable constant to be determined later.
Finally let
\begin{align}
  T_{\varepsilon}\coloneqq \sup\{\tau\in[0,T_0]: \eqref{assumptions}  \ \text{holds true}.\}>0.\label{def:Teps}
\end{align}
The positivity of $T_\varepsilon$ follows from the initial estimate \eqref{initial assumption-0}, and the smoothness of $c_\varepsilon$ and $c_\mathrm{A}$, the continuity of $\|c_\varepsilon(t)-c_\mathrm{A}(t)\|_{\mathrm{L}^2(\Omega)}$ in time $t\in[0,T_0]$. It remains to show that $T_\varepsilon$ has a positive lower
bound that is independent of $\varepsilon$.

The following two theorems are taken from \cite[Theorems 4.1 and 4.2]{AbeFei23}. The proofs are given there.
\begin{theorem}\label{thm:weEstim}
  Assume that $c_\mathrm{A}, \tilde{\we}$ are as above and $\ue$ satisfies \eqref{assumptions} for some $R>0$, $\tau=T(\eps)$, and $N\geq 3$. Then there are some $C(R)>0$, $\eps_0>0$, and $M>0$ independent of $\eps \in (0,\eps_0)$ and $T\in (0,T_\varepsilon]$ such that $\|h_{N+\frac12}\|_{X_T}\leq M$ and
  \begin{equation}
    \|\we\|_{L^\infty(0,T;L^2(\Omega))}+\|\we\|_{L^2(0,T;H^1(\Omega))}\leq C(R).\label{neww1estimate}
  \end{equation}
  where $h_{N+\frac12}$ is as in Theorem~\ref{thm: h_N+1/2} with $\ue$ as in \eqref{eq:Defue} and $\we=\frac{\tilde{\we}}{\varepsilon^{N+\frac{1}{2}}}$.
\end{theorem}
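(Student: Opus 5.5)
The natural route is an energy estimate for the velocity error $\tilde{\we}=\ve_\eps-\ve_\mathrm{A}$ from \eqref{eq:w}, divided by $\eps^{N+\frac12}$ at the end. The bound on $h_{N+\frac12}$ then follows from the Remark after Theorem~\ref{thm: h_N+1/2}: $\|h\|_{X_T}\le C(T_0)\|\ue|_\Gamma\|_{L^2(0,T;L^2(\Gamma_t))}$. Since $\ue=\we$, the trace theorem and Young's inequality give $\|\we|_\Gamma\|_{L^2(L^2(\Gamma_t))}\le C\|\we\|_{L^2(0,T;H^1)}$, so $M$ can be taken to be $C(T_0)$ times the constant in \eqref{neww1estimate}. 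The two estimates are coupled through $c_\mathrm{A}$, which depends on $h_{N+\frac12}$. I will therefore close them together by a continuity or fixed-point argument in $T$, exactly as in the construction of $\ue$ (cf.\ \cite[Lemma~4.2]{StokesAllenCahn}).

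\emph{Energy estimate.} First I remove the divergence defect. Choose $\mathbf{g}$ with $\Div\mathbf{g}=-G_\eps$ and $\mathbf{g}|_{\partial\Omega}=0$, using the Bogovskii operator. Then \eqref{eq:SepsEstim1} gives $\mathbf{g}=O(\eps^{N+1})$ in $H^{1/2}(0,T;H^1)$, which controls $\partial_t\mathbf{g}$ in a negative space. Next I test the equation for $\tilde{\we}-\mathbf{g}$ with $\tilde{\we}-\mathbf{g}$. Korn's inequality together with $\nu\ge\nu_0>0$ gives dissipation $\ge c\|\nabla\tilde{\we}\|^2_{L^2}$. The right-hand side is handled term by term:
\begin{itemize}
\item $R^1_\eps+R^2_\eps$: bounded by $C'\eps^{N+\frac12}\|\we|_\Gamma\|_{L^2L^2}+C(M)\eps^{N+1}$ in $L^2(H^{-1})$. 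The first contribution is small relative to the dissipation only after multiplying by $\eps^{N+\frac12}$. One must check that $C'$ does not depend on $M$; the theorem states that it does not, so absorbing it via Young's inequality is fine.
\item The convective terms $\ve_\eps\cdot\nabla\tilde{\we}$ and $\tilde{\we}\cdot\nabla\ve_\mathrm{A}$: the first is antisymmetric up to $\Div$ errors, and the second is bounded by $\|\nabla\ve_\mathrm{A}\|_\infty\|\tilde{\we}\|^2$. The latter norm is $O(\eps^{-1})$ near $\Gamma$ because of the inner layer. I will instead use that $\nabla\ve_\mathrm{A}$ is bounded in normal derivative times $\tilde{\we}$, as in \cite{AbeFei23}, and use Gronwall.
\item $\eps\Div(\nabla u\otimes^s\nabla c_\mathrm{A})$: tested against $\nabla\tilde{\we}$, it is split into the leading part $\eps^{-1}\theta_0'(\rho)\partial_\no u\,\no\otimes\no$ and the remaining parts. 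The leading part is bounded using Corollary~\ref{cor:SpectralDecomp}, Lemma~\ref{lem:DivergenceFreeRemainder} and the assumptions \eqref{assumptions}. The goal is $O(\eps^{N+\frac12})\|\tilde{\we}\|_{H^1}$ in $L^2$ in time, with constant $C(R)$.
\item $\eps\Div(\nabla u\otimes\nabla u)$: bounded by $\eps\|\nabla u\|_{L^4}^2\|\nabla\tilde{\we}\|$. Ladyzhenskaya's inequality and $\nabla^2u=O(\eps^{N-\frac32})$ give $\eps\cdot\eps^{2N-3}$, which is $\le\eps^{N+\frac12}$ for $N\ge3$.
\item The viscosity term $(\nu(c_\eps)-\nu(c_\mathrm{A}))D\ve_\mathrm{A}$: bounded by $\|u\|_{L^4}\|D\ve_\mathrm{A}\|_{L^4}\|\nabla\tilde{\we}\|$. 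This is the delicate term because $D\ve_\mathrm{A}$ is not uniformly bounded across the layer. I use the Ladyzhenskaya inequality for $u$ and the interface estimate for $\tilde{\we}$ from \cite{AbeFei23}.
\end{itemize}

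Gronwall's inequality together with the initial bound \eqref{initial assumption-0} then yields $\|\tilde{\we}\|_{L^\infty L^2}+\|\tilde{\we}\|_{L^2H^1}\le C(R)\eps^{N+\frac12}$. Dividing by $\eps^{N+\frac12}$ gives \eqref{neww1estimate}.

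\emph{Main obstacle.} The capillary term $\eps\Div(\nabla u\otimes^s\nabla c_\mathrm{A})$ is the hard step. Naively it is of size $\eps^{-1}\cdot\eps\|\partial_\no u\|$, and \eqref{assumptions} only gives $\eps\|\partial_\no u\|\le R\eps^{N+\frac12}$, so the naive bound loses a factor $\eps^{-1}$. I would first decompose $u$ via Corollary~\ref{cor:SpectralDecomp}, writing $u=\eps^{-\frac12}Z\theta_0'+u^{\mathbf{R}}$. For the $Z$-part, $\theta_0'\theta_0''$ is a derivative in $\rho$; integrating by parts in $\rho$ and using $\Div\tilde{\we}\approx0$ through Lemma~\ref{lem:DivergenceFreeRemainder} gains $\eps^{\frac32}$. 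The remainder $u^{\mathbf{R}}$ is handled by \eqref{f2u estimate-1}. If this gain does not fully close the estimate, I would fall back on the argument of \cite[Theorem~4.1]{AbeFei23}, which handles the same term.
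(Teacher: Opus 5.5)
Your architecture matches the proof the paper relies on. The statement is imported from \cite[Theorem~4.1]{AbeFei23}, and the tools you use for the capillary term (the $X_\eps$-decomposition and Lemma~\ref{lem:DivergenceFreeRemainder}) are taken from there for exactly this purpose. That step closes without any fallback. For divergence-free $\boldsymbol{\varphi}$, the $Z$-part contributes $C\|Z\|_{H^1(\T^1)}\|\boldsymbol{\varphi}\|_{H^1(\Omega)}$. The last line of \eqref{assumptions}, combined with the $X_\eps$-estimate at the end of Section~\ref{sec:Prelim}, gives $\|Z\|_{L^2(0,T;H^1)}\le CR\eps^{N+\frac12}$. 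For the remainder you need the $H^1$-bound \eqref{f2u estimate-2}, not \eqref{f2u estimate-1}, because the term involves $\theta_0'(\rho)\nabla u^{\mathbf R}\otimes\no$.

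Two other steps, however, fail as written.

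(i) The $R^1_\eps$-term. After inserting $\ue=\we=\eps^{-N-\frac12}\tilde{\we}$, the tested term is $C'\|\tilde{\we}|_\Gamma\|_{L^2(0,T;L^2(\Gamma_t))}\|\tilde{\we}\|_{L^2(0,T;H^1)}$. This is of the same size as the dissipation, so Young's inequality cannot absorb it unless $C'$ happens to be small. The $M$-independence of $C'$ only lets you fix $M$ (as $C(T_0)$ times the $M$-independent part of the bound) before $\eps_0$; it does not create smallness. What you need is the interpolated trace inequality $\|\mathbf{w}\|_{L^2(\Gamma_t)}^2\le C\|\mathbf{w}\|_{L^2(\Omega)}\|\mathbf{w}\|_{H^1(\Omega)}$. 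It turns the term into $\eta\|\tilde{\we}\|^2_{L^2(0,T';H^1)}+C_\eta\int_0^{T'}\|\tilde{\we}\|_{L^2}^2\,dt$. Combine this with the $R^1_\eps$-bound on every subinterval $[0,T']$, which holds by causality of the equation for $h_{N+\frac12}$, and then apply Gronwall.

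(ii) The premise that $\nabla\ve_\mathrm{A}$ is $O(\eps^{-1})$ in the layer is false, and the $L^4$ route built on it loses powers of $\eps$.
\begin{itemize}
\item The leading inner velocity has no $O(\eps^{-1})$ gradient. The $O(\eps^{-2})$ balance of the inner momentum equation gives $\partial_\rho(\nu(\theta_0)\partial_\rho(\btau\cdot\tv_0))=0$, so boundedness excludes $\rho$-dependence, and the divergence constraint gives $\no\cdot\partial_\rho\tv_0=0$.
\item The kink of $\ve_0^\pm$ is resolved only at order $\eps$, with $O(1)$ gradient, and the $h_{N+\frac12}$-parts carry a factor $\eps^{N-\frac12}$.
\item Hence $\|\nabla\ve_\mathrm{A}\|_{L^\infty}\le C$.
\end{itemize}
With this, $\tilde{\we}\cdot\nabla\ve_\mathrm{A}$ is a plain Gronwall term. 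The viscosity term is simply bounded by $C\|u\|_{L^\infty(0,T;L^2)}\|\nabla\tilde{\we}\|_{L^2(0,T;L^2)}\le CR\eps^{N+\frac12}\|\nabla\tilde{\we}\|_{L^2(0,T;L^2)}$.

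Your route via $\|u\|_{L^4}$ gives at best $\|u\|_{L^2(0,T;L^4)}\lesssim R\eps^{N}$, using Ladyzhenskaya with $\|\partial_\no u\|_{L^2(\Omega\times(0,T)\cap\Gamma(2\delta))}\lesssim R\eps^{N-\frac12}$. That only yields $\|\we\|_{L^2(0,T;H^1)}\lesssim\eps^{-\frac12}$ instead of a bound $C(R)$. Moreover, if $\nabla\ve_\mathrm{A}$ really were $O(\eps^{-1})$, Gronwall for the convective term would produce a fatal factor $e^{CT/\eps}$.
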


\begin{theorem} For $T\in(0,T_\varepsilon)$ there holds
  \begin{alignat}{2}
    \int_0^{T}\bigg|\int_{\Omega}\big(\we-\we|_{\Gamma}\big)\cdot \nabla c_\mathrm{A}u\dif{x}\,\bigg|\dif t & \leq C(R)\varepsilon^{N+1}.\label{est:diffw1couple}
  \end{alignat}
\end{theorem}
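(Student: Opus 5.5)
We have to estimate $\int_0^T\bigl|\int_\Omega (\we-\we|_\Gamma)\cdot\nabla c_\mathrm{A}\,u\,dx\bigr|\,dt$, where $\we|_\Gamma(x,t)=\we(P_{\Gamma_t}(x),t)$ is extended constantly in normal direction inside $\Gamma(2\delta)$. The weight in front of $\we|_\Gamma$ is $\zeta_\Gamma$, as in \eqref{eq:cA}. The plan is to split $\Omega$ into the region away from the interface and the tubular neighbourhood $\Gamma_t(2\delta)$.

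\textbf{Outer region.} Outside $\Gamma_t(\delta)$ the matching conditions of Theorem~\ref{thm:Approx1} give $\nabla c_\mathrm{A}=O(\eps)$ in $L^\infty$. The extra $\eps^{N-\frac12}$ and $\eps^{N+\frac32}$ contributions are exponentially small or smaller. Hence this part is bounded by
\[
C\eps\,\|\we\|_{L^2(0,T;L^2)}\,\|u\|_{L^2(0,T;L^2)}\le C(R)\eps\cdot\eps^{N+\frac12},
\]
using \eqref{assumptions} and Theorem~\ref{thm:weEstim}. On $\Gamma_t(2\delta)\setminus\Gamma_t(\delta)$ the same argument applies, since there $\partial_\rho$ of the profiles is exponentially small.

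\textbf{Inner region.} Here $\nabla c_\mathrm{A}=\eps^{-1}\theta_0'(\rho)\no+O(1)$, the $O(1)$ part being handled as in the outer region. For the leading part we write
\[
\we(x,t)-\we|_\Gamma(x,t)=\int_0^{r}\partial_\no\we(X(r',s,t),t)\,dr',\qquad r=d_\Gamma(x,t),
\]
so that $|\we-\we|_\Gamma|\le |r|^{1/2}\bigl(\int_{-2\delta}^{2\delta}|\partial_\no\we|^2dr'\bigr)^{1/2}$. Together with $|r|\le\eps(|\rho|+M)$ and the $L^2$-in-$\rho$ integrability of $\theta_0'(\rho)|\rho|^{1/2}$, the integral in the normal direction gains the factor $\eps^{-1}\cdot\eps^{1/2}\cdot\eps^{1/2}$.

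We then apply the spectral decomposition of Corollary~\ref{cor:SpectralDecomp} to $u$: $u=\eps^{-\frac12}Z\beta\theta_0'(\rho)+\eps^{-\frac12}Z\Psi+u^{\mathbf R}$.
\begin{itemize}
\item For the $Z$-part, pair $\theta_0'\cdot\eps^{-1/2}\theta_0'$. With $J\,dr=\eps J\,d\rho$ this gives a bound
\[
C\,\eps^{-1}\eps^{-\frac12}\eps^{1}\eps^{\frac12}\,\|\nabla\we\|_{L^2}\,\|Z\|_{L^2(\T^1)}\cdot(\text{bounded }\rho\text{-integrals}),
\]
that is, of order $\|\nabla\we\|_{L^2}\|Z\|_{L^2}$. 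Here $\|Z\|_{L^2(0,T;H^1)}$ is controlled by \eqref{f2u estimate-2} through $\|u\|_{L^2}^2+\Lambda_\eps/\eps\lesssim R^2\eps^{2N}$, using the last line of \eqref{assumptions}, which gives $\|Z\|_{L^2_tH^1}\le C R\eps^{N}$. This yields only $C(R)\eps^{N}$, so one additional $\eps$ is needed. It comes from the cancellation $\int_\R\theta_0'(\rho)^2\rho\,d\rho=0$ (evenness): since the $r$-integral of $\partial_\no\we$ is to leading order $r\,\partial_\no\we|_\Gamma$, the odd weight kills the leading term. The next term gains $\eps^{1/2}$ from the $H^{1/2}$-type regularity of $\partial_\no\we$ in $r$, or a full $\eps$ after integrating by parts in $r$, moving one derivative onto $\theta_0'^2 J$ and using the fact that $\we$ is divergence free to trade $\partial_\no$ of the normal component for tangential derivatives.
\item For the $\Psi$ and $u^{\mathbf R}$ parts, use \eqref{f1u estimate} ($\Psi=O(\eps)$) and \eqref{f2u estimate-1}, $\|u^{\mathbf R}\|_{L^2}\lesssim\eps^{N+1}$. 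Combined with the Hölder bound above, these give $C(R)\eps^{N+1}$ directly.
\end{itemize}

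\textbf{Main obstacle.} The hard step is squeezing the extra power of $\eps$ from the $Z\theta_0'$ part, since the crude bound gives only $\eps^N$. I would first try the expansion $\we-\we|_\Gamma=r\,\partial_\no\we|_{\Gamma}+O(r^{3/2}\|\nabla^2\we\|)$ and exploit that $\theta_0'(\rho)^2\rho$ is odd. This is complicated by the shift $h_\eps$, which produces a term $-\eps h_\eps\,\theta_0'^2$ of order $\eps$ that is harmless. It is also complicated by the lack of $H^2$ bounds on $\we$. If the latter fails, I would instead mollify $\we$ in the normal direction at scale $\eps$, or use the $L^{4,\infty}$ embedding and Hölder in time as in \cite[Theorem 4.2]{AbeFei23}, whose proof this argument follows.
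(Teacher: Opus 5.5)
\textbf{There is a genuine gap in the key step.} Your splitting and your treatment of the $\Psi$- and $u^{\mathbf R}$-parts are fine. What is missing is the extra power of $\eps$ on the $Z$-part
$$\eps^{-\frac32}\int_{\T^1}\int Z\,\theta_0'(\rho)^2\,J\,(\we-\we|_\Gamma)\cdot\no\,\dif r\,\dif s,$$
and the mechanisms you actually develop cannot supply it. All you know is $\we\in L^\infty(0,T;L^2)\cap L^2(0,T;H^1)$ (Theorem~\ref{thm:weEstim}). Consequently:
\begin{itemize}
\item $\partial_\no\we|_\Gamma$ has no trace, so the expansion $r\,\partial_\no\we|_\Gamma+O(r^{3/2}\|\nabla^2\we\|)$ is unavailable.
\item There is no $H^{1/2}$-bound for $\partial_\no\we$.
\item Mollifying, or integrating by parts in $r$, only reproduces the crude bound.
\end{itemize}
Parity of $\theta_0'^2$ is not the mechanism. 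Consider a field with $\we\cdot\no=\eps^{\frac12}a(s)\min\{|r|/\eps,1\}$ near $\Gamma$. Then $\|\we\|_{H^1}\le C\|a\|_{H^1(\T^1)}$, yet
$$\eps^{-\frac32}\int\theta_0'(\rho)^2(\we-\we|_\Gamma)\cdot\no\,J\,\dif r=c_\eps(s)\,a(s),\qquad c_\eps\ge c>0.$$
So for $u=\eps^{-\frac12}Z\theta_0'(\rho)$, which satisfies \eqref{assumptions}, nothing better than the crude bound holds. What excludes such fields is $\Div\we=-\eps^{-N-\frac12}G_\eps$. You mention this only in passing, and you attach it to an integration by parts in $r$, which is not where the gain comes from.

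\textbf{How to use the divergence constraint.} Write $w_n=\we\cdot\no$, $w_\tau=\we\cdot\btau$ and $\dif x=J\,\dif r\,\dif s$. Then $\partial_r(Jw_n)=J\Div\we-\partial_s w_\tau$. Integrating by parts in $s$ (not in $r$) gives
\begin{equation*}
\int_{\T^1}\int Z\theta_0'(\rho)^2\bigl[(Jw_n)(r,s)-(Jw_n)(0,s)\bigr]\dif r\,\dif s
=\int_{\T^1}\int\Bigl(\partial_s\bigl(Z\theta_0'(\rho)^2\bigr)\int_0^r w_\tau\,\dif r'+Z\theta_0'(\rho)^2\int_0^r J\Div\we\,\dif r'\Bigr)\dif r\,\dif s .
\end{equation*}
The three resulting contributions are controlled as follows.
\begin{itemize}
\item First term: use $\bigl|\int_0^r w_\tau\,\dif r'\bigr|\le |r|\sup_{|r'|\le 2\delta}|\we(r',s)|$, $\eps^{-\frac32}\int|r|e^{-2\alpha|\rho|}\dif r\le C\eps^{\frac12}$, and $H^1\hookrightarrow L^{4,\infty}$. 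Multiplied by $\eps^{-\frac32}$, it is $\le C\eps^{\frac12}\|Z\|_{H^1(\T^1)}\|\we\|_{H^1}$.
\item Second term: it is $\le C\|Z\|_{L^2(\T^1)}\|\Div\we\|_{L^2}$, and $\|\Div\we\|_{L^2(0,T;L^2)}\le C\eps^{\frac12}$ by \eqref{eq:SepsEstim1}.
\item Commutator term $(J(r,s)-J(0,s))\,w_n(0,s)=O(|r|)\,|\we|_\Gamma|$: it costs only $C\eps^{\frac12}\|Z\|_{L^2}\|\we\|_{H^1}$.
\end{itemize}

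\textbf{Your count of $\|Z\|$ is off by $\eps^{\frac12}$.} The argument above yields $C(R)\eps^{N+1}$ only with $\|Z\|_{L^2(0,T;H^1(\T^1))}\le CR\eps^{N+\frac12}$, not $\eps^N$ as you claim. Since \eqref{def Lambda energy} carries a factor $\eps$, the last line of \eqref{assumptions} gives $\int_0^T\Lambda_\eps/\eps\,\dif t\le R^2\eps^{2N+1}$. So the crude bound is already $C(R)\eps^{N+\frac12}$, and exactly $\eps^{\frac12}$ is missing. With your count, even the correct argument would stop at $\eps^{N+\frac12}$.

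\textbf{The $O(1)$ inner remainder needs localization.} The $O(1)$ remainder of $\nabla c_\mathrm{A}$ in $\Gamma_t(2\delta)$ cannot be handled ``as in the outer region'': H\"older gives only $C(R)\eps^{N+\frac12}$. You need that this remainder is $O(e^{-\alpha|\rho|})+O(\eps)$. Then $|\we-\we|_\Gamma|\le|r|^{\frac12}\|\partial_r\we(\cdot,s)\|_{L^2}$ produces the extra factor $\eps$.
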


\subsection{Proof of Theorem \ref{thm:main}}

In order to estimate the error due to linearization of $f'(c_\eps)$ we use from \cite[Proposition 4.3]{AbeFei23}:
\begin{prop} Let the same assumptions hold as in the beginning of Section \ref{subsec:prep}. Then
  \begin{align}
    \int_0^{T_\varepsilon}\int_{\Omega} |u|^3\dif x\,\dif t\leq C(R)T_\varepsilon^{\frac{1}{2}}\varepsilon^{3N+1}.\label{est:nonlinear}
  \end{align}
\end{prop}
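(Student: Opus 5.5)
We need to prove $\int_0^{T_\eps}\int_\Omega |u|^3\,dx\,dt\le C(R)T_\eps^{1/2}\eps^{3N+1}$ for $u=c_\eps-c_\mathrm{A}$, using the bounds \eqref{assumptions} valid on $(0,T_\eps)$. The plan is to split $\Omega$ into the interface region $\Gamma_t(2\delta)$ and its complement. In each region we apply a two-dimensional Gagliardo--Nirenberg (Ladyzhenskaya-type) inequality that is adapted to the anisotropic information we have: good control of $u$ in $L^\infty_tL^2_x$, of $\nabla_\btau u$ in $L^2_{t,x}$, and only weaker control of $\partial_\no u$ (an extra factor $\eps^{-1}$).

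\emph{Away from the interface.} On $\Omega\setminus\Gamma_t(\delta)$ we use the standard 2D inequality $\|u\|_{L^3}^3\le C\|u\|_{L^2}^2\|u\|_{H^1}$, localized with a cutoff that is smooth in $d_\Gamma$. Then
\[
\int_0^{T_\eps}\!\!\int_{\Omega\setminus\Gamma_t(\delta)}|u|^3\le C\sup_t\|u\|_{L^2}^2\int_0^{T_\eps}\big(\|u\|_{L^2}+\|\nabla u\|_{L^2(\Omega\setminus\Gamma_t(\delta))}\big)dt .
\]
Combining the $L^\infty L^2$ bound with Cauchy--Schwarz in time gives $\le C R^2\eps^{2N+1}\,T_\eps^{1/2}R\eps^{N+\frac12}$. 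This is even better than needed.

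\emph{Near the interface.} Here we work in the coordinates $(r,s)$, using the integration formula with the bounded Jacobian $J$. For fixed $s$, the 1D interpolation $\|g\|_{L^\infty(-2\delta,2\delta)}\le C\|g\|_{L^2}^{1/2}\|g\|_{H^1}^{1/2}$ in $r$ is combined with the analogous inequality in $s$ on $\T^1$. Concretely, we write $\int|u|^3\le \|u\|_{L^2}\,\|u\|_{L^4}^2$ and estimate $\|u\|_{L^4}^2$ by the anisotropic Ladyzhenskaya inequality
\[
\|u\|_{L^4(\Gamma_t(2\delta))}^2\le C\|u\|_{L^2}\big(\|u\|_{L^2}+\|\partial_\no u\|_{L^2}\big)^{1/2}\big(\|u\|_{L^2}+\|\nabla_\btau u\|_{L^2}\big)^{1/2}.
\]
This follows from the $L^{4,\infty}$ embedding argument applied in each direction separately. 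The result is
\[
\int_{\Gamma_t(2\delta)}|u|^3\le C\|u\|_{L^2}^2\big(\|u\|_{L^2}+\|\partial_\no u\|_{L^2}\big)^{1/2}\big(\|u\|_{L^2}+\|\nabla_\btau u\|_{L^2}\big)^{1/2}.
\]
Integrating in time and using Hölder ($L^\infty\cdot L^4\cdot L^4\subset L^1$ would require $L^4$ in time, so instead we take the product of square roots and apply Cauchy--Schwarz):
\[
\int_0^{T_\eps}\!\big(\cdots\big)^{1/2}\big(\cdots\big)^{1/2}dt\le T_\eps^{1/2}\|(\cdot)\|_{L^2_t}^{1/2}\|(\cdot)\|_{L^2_t}^{1/2}.
\]
With \eqref{assumptions} this gives $\|\partial_\no u\|_{L^2L^2}\le R\eps^{N-\frac12}$ and $\|\nabla_\btau u\|_{L^2L^2}\le R\eps^{N+\frac12}$. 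The interface contribution is therefore bounded by
\[
C\,R^2\eps^{2N+1}\,T_\eps^{1/2}\,(R\eps^{N-\frac12})^{1/2}(R\eps^{N+\frac12})^{1/2}=C R^3T_\eps^{1/2}\eps^{3N+1}.
\]
The $\|u\|_{L^2}$ terms are of higher order. The Hölder step with $T_\eps^{1/2}$ is a little delicate: it is cleanest to bound one factor as $(a)^{1/2}(b)^{1/2}\le$ (its $L^1_t$ norm), then use $\|\cdot\|_{L^1_t}\le T_\eps^{1/2}\|\cdot\|_{L^2_t}$ on the geometric mean and Cauchy--Schwarz.

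\emph{Main obstacle.} The key step is the anisotropic interpolation inequality on the curved strip $\Gamma_t(2\delta)$, with constants uniform in $t$. The isotropic inequality would lose a factor $\eps^{-1/2}$ and yield only $\eps^{3N+\frac12}$. To prove it, I would pull back to $(-2\delta,2\delta)\times\T^1$ via $X(\cdot,\cdot,t)$, note that $\partial_r$ corresponds to $\partial_\no$ and $\partial_s$ to a bounded multiple of $\partial_\btau$, and apply the one-dimensional estimates $\sup_r|g|^2\le C\|g\|_{L^2_r}\|g\|_{H^1_r}$ and $\sup_s|g|^2\le C\|g\|_{L^2_s}\|g\|_{H^1_s}$. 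Then
\[
\int|u|^4\le \int\sup_r|u|^2\cdot\sup_s|u|^2
\]
is bounded using Fubini. Smoothness of $X$ on $[0,T_0]$ makes all constants uniform.
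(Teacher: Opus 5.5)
Your argument is correct and is essentially the standard proof behind the result the paper cites (\cite[Proposition 4.3]{AbeFei23}). You apply Gagliardo--Nirenberg away from the interface and an anisotropic Ladyzhenskaya inequality in the tubular coordinates near it. H\"older in time with exponents $4,4,2$ then produces the factor $T_\eps^{1/2}$, and the mixed product $(R\eps^{N-\frac12})^{\frac12}(R\eps^{N+\frac12})^{\frac12}$ yields $\eps^{3N+1}$. The one point to state precisely is the outer cutoff: take it equal to $1$ on $\Omega\setminus\Gamma_t(2\delta)$ and vanishing on $\Gamma_t(\delta)$, or use a uniform extension operator for $\Omega\setminus\Gamma_t(\delta)$. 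Then only $\nabla u$ on $\Omega\setminus\Gamma_t(\delta)$ enters, while your interface estimate covers all of $\Gamma_t(2\delta)$.
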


\paragraph{Proof of Theorem \ref{thm:main}:} The central elements of the primary argument are as follows: It is sufficient to demonstrate that the function $T_\varepsilon$ possesses a positive lower bound $T_1\in (0,T_0]$ that is independent of the parameter $\varepsilon>0$ if $\varepsilon\leq \varepsilon_0$ for $\varepsilon_0>0$ sufficiently small. This will be accomplished by subtracting the approximate equations from the exact ones, employing the spectral estimate and the Gronwall inequality, together with the assumptions set forth in Subsection \ref{subsec:prep}. The next part of the argument will proceed by means of standard estimates, employing Hölder's and Young's inequalities. The crucial distinction between this approach and the ones in \cite[Section 4.2]{AbeFei23} is the inclusion of the mass conservation term. 
We note, that  \begin{equation}\label{velocity decompose new}
  \ve_\eps= \ve_\mathrm{A}+ \varepsilon^{N+\frac{1}{2}}\we
\end{equation}
with the help of \eqref{eq:NSAC3} and \eqref{eq:cA} we then find  that
\begin{align}\nonumber
  \partial_t u & +\ve_\eps\cdot \nabla u+ \eps^{N+\frac12}\big(\we-\we|_{\Gamma}\big)\cdot \nabla c_\mathrm{A} \\\label{eq:u2}
  &= \Delta u-\frac{f'(c_\eps)}{\eps^2}+\frac{f'(c_\mathrm{A})}{\eps^2} +{\varepsilon}^{-1}(\lambda_\varepsilon-\lambda_\mathrm{A})- S_\eps
\end{align}

First we estimate the mean value of $u$. To this end we take the mean value over \eqref{eq:u2} with respect to $x\in\Omega$ and obtain
\begin{align*}
  \frac{\dif}{\dif t} \fint_\Omega u\,dx + \eps^{N+\frac12}\fint_\Omega \big(\we-\we|_{\Gamma}\big)\cdot \nabla c_\mathrm{A}\,dx = \fint_\Omega \frac{f'(c_\mathrm{A})}{\eps^2} \,dx -\frac{\lambda_\mathrm{A}}\eps + O(\eps^{N+1})
\end{align*}
in $L^2(0,T_\eps)$ since $\lambda_\eps = \frac1\eps \int_\Omega f'(c_\eps)\,dx$ and due to \eqref{eq:SepsEstim1}. Moreover,
\begin{align*}
  &\eps^{N+\frac12}\int_\Omega |\big(\we-\we|_{\Gamma}\big)\cdot \nabla c_\mathrm{A}|\,dx\\
  &\leq C\eps^{N-\frac12} \int_{-2\delta}^{2\delta}\int_{\T^1} \underbrace{|(\tilde\we(r,s,t)-\tilde\we(0,s,t))|}_{\leq C\sqrt r \|\partial_r \tilde w(.,s)\|_{L^2((-2\delta,2\delta))}} \theta_0'(\rho_\eps)\,ds\, dr + C\eps^{N+1}\|\we(.,t)\|_{H^1(\Omega)}\\
  &\leq  C\eps^{N+1}\|\we(.,t) \|_{H^1(\Omega)}.
\end{align*}
Hence
\begin{equation*}
  \fint_\Omega \frac{f'(c_\mathrm{A})}{\eps^2} \,dx -\frac{\lambda_\mathrm{A}}\eps = O(\eps^{N+1})\qquad \text{in }L^2(0,T_\eps) 
\end{equation*}
and we can reformulate \eqref{eq:u2} as
\begin{align}\nonumber
  \partial_t u & +\ve_\eps\cdot \nabla u+ \eps^{N+\frac12}\big(\we-\we|_{\Gamma}\big)\cdot \nabla c_\mathrm{A}= \Delta u-P_0\left(\frac{f'(c_\eps)}{\eps^2}-\frac{f'(c_\mathrm{A})}{\eps^2}\right) - S'_\eps
  \\\label{eq:u}
               &= \Delta u-\frac{f''(c_\mathrm{A})}{\eps^2}u + \fint_{\Omega}\frac{f''(c_\mathrm{A})}{\eps^2}u\dif{x} -\eps^{-2}P_0\mathcal{N}(c_\mathrm{A},u)- S_\eps',
\end{align}
where $S_\eps'= O(\eps^{N+1})$ in $L^2(0,T_\eps,X_\eps')$, 
\begin{equation*}
  P_0 f= f-\fint_\Omega f(x)\,dx \qquad \text{for all }f\in L^1(\Omega)
\end{equation*}
and $\mathcal  {N}(c_\mathrm{A},u)=f'(c_\eps)-f'(c_\mathrm{A})-f''(c_\mathrm{A})u$. Because of the growth assumption $|f'''(s)|\leq C(|s|+1)$ for all $s\in\R$ and some $C>0$, we have by a Taylor expansion
\begin{equation*}
  |\mathcal  {N}(c_\mathrm{A},u)(x,t)|\leq C|u(x,t)|^2 \qquad \text{for all }x\in\Omega, t\in [0,T_0].
\end{equation*}
\noindent
\textit{Ad \eqref{eq:Error1}}. We test \eqref{eq:u} with $u$, integrate by parts and obtain:
  \begin{align*}
     & \frac{1}{2} \frac{\mathrm{~d}}{\mathrm{~d} t} \int_{\Omega} u^2 \mathrm{~d} x+\int_{\Omega}\left(|\nabla u|^2+\frac{f^{\prime \prime}\left(c_{\mathrm{A}}\right)}{\varepsilon^2} u^2\right) \mathrm{d} x  = \int_\Omega \frac{f^{\prime \prime}\left(c_{\mathrm{A}}\right)}{\varepsilon^2} u \dif x \fint_\Omega u(x)\dif x 
    \\&-\frac1{\varepsilon^2} \int_{\Omega}u P_0 \mathcal{N}\left(c_{\mathrm{A}}, u\right) \mathrm{~d} x-\int_{\Omega} S'_{\varepsilon} u \mathrm{~d} x
    - \varepsilon^{N+\frac{1}{2}}\int_\Omega (\mathbf{w}-\mathbf{w}|_\Gamma)\cdot\nabla c_\mathrm{A} u \dif{x}.
  \end{align*}
 By the definition of the $X_\eps$-norm, there is a decomposition $\tilde{u}(r,s,t)=Z(s,t)\eps^{-\frac12}\theta_0(\rho)+ \tilde{v}(r,s)$ in $\Gamma_t(\delta)$, where
  \begin{equation*}
    \|Z\|_{L^2(\T^1\times (0,T_\eps))}+ \|v\|_{L^2(0,T_\eps;H^1(\Gamma_t(\delta))}+\tfrac1\eps\|v\|_{L^2(0,T_\eps;L^2(\Gamma_t))}\leq CR\eps^{N+\frac12}.
  \end{equation*}
  This leads to
  \begin{align*}
   & \left|\int_\Omega \frac{f^{\prime \prime}\left(c_{\mathrm{A}}\right)}{\varepsilon^2} u \dif x \fint_\Omega u(x)\,dx\right| 
                                                                                                             \leq C\eps^{N-\frac12}\left|\int_{\Gamma_t(2\delta)} \zg f^{\prime \prime}\left(c_{\mathrm{A}}\right) u \dif x\right|  + O(\eps^{2N+1})\\
        &\quad \leq C\eps^{N-\frac12} \left|\int_{\Gamma_t(2\delta)} \zg f^{\prime \prime}\left(\theta_0(\rho_\eps(x,t))\right) \theta_0'(\rho_\eps(x,t))Z(S(x,t)) \dif x\right| + O(\eps^{2N+1})\\
     &\quad= C\eps^{N-\frac12}\left|\int_{\Gamma_t(2\delta)} \zg f^{\prime}\left(\theta_0(\rho_\eps(x,t))\right) Z(S(x,t)) \dif x\right| + O(\eps^{2N+1})\\
&\quad \leq O(\eps^{2N+1})
  \end{align*} 
  in $L^2(0,T_\eps)$ since $\left|\fint_\Omega u\dif x\right|=\left|\fint_\Omega c_A\dif x\right|\leq C\eps^{N+\frac32}$ uniformly in $t\in (0,T_\eps)$, 
  $\|u\|_{L^2(\Omega\times (0,T_\eps)\setminus \Gamma(\delta))}\leq C\eps^{N+\frac32}$ and Lemma~\ref{lem:MeanEstim}. Moreover, we have
  \begin{align*}
    -\frac1{\varepsilon^2} \int_{\Omega}u P_0 \mathcal{N}\left(c_{\mathrm{A}}, u\right)\, \mathrm{d} x &= - \frac1{\varepsilon^2} \int_{\Omega}u \mathcal{N}\left(c_{\mathrm{A}}, u\right) \, \mathrm{d} x + \frac1{\varepsilon^2}\int_\Omega \mathcal{N}\left(c_{\mathrm{A}}, u\right)\dif x \fint_\Omega u\dif x \\
    &\leq \frac{C}{\eps^2}\|u\|_{L^3(\Omega)}^3 + CR^2\eps^{3N+\frac12}
  \end{align*}
  uniformly in $t\in [0,T_\eps]$. Altogether we obtain
\begin{alignat}{2}\label{ineq:u energy-00}
   & \frac{1}{2}\frac{\dif}{\dif t}\int_{\Omega} u^2\dif x\,+ \int_{\Omega}\left( |\nabla u|^2+\frac{f''(c_\mathrm{A})}{\eps^2}u^2\right) \dif{x} \\\nonumber&\leq\eps^{N+\frac12}\bigg|\int_{\Omega}\big(\we-\we|_{\Gamma}\big)\cdot \nabla c_\mathrm{A}u\dif x\,\bigg|+\frac{C}{\eps^2}\int_{\Omega}|u|^3\dif x\,+\bigg|\int_{\Omega} S_\eps' u\dif x\,\bigg|+ CR^2\eps^{3N+\frac12}+O(\eps^{2N+1})
\end{alignat}
in $L^2(0,T_\eps)$. 
Applying the spectral estimate of Theorem \ref{thm:Spectral} in \eqref{ineq:u energy-00}, Gronwall's inequality, and \eqref{initial assumption-0}, one derives similarly as in \cite[Section 4.2]{AbeFei23}
\begin{alignat}{2}\label{ineq:u energy-1}
   & \sup_{0\leq t\leq T}\frac{1}{2}\int_{\Omega} u^2\dif x\, + \int_{0}^T\int_{\Om\setminus \Gamma_t(\delta)} |\nabla u|^2\dif x\, \dif t +  \int_{0}^T\int_{\Gamma_t({\delta})} |\nabla_\btau u|^2\dif{x} \dif t\nonumber \\&\leq e^{C_LT_0}\bigg(\frac{1}{2}\int_{\Omega} u^2|_{t=0}\dif x\, +\eps^{N+\frac12}\int_{0}^T\bigg|\int_{\Omega}\big(\we-\we|_{\Gamma}\big)\cdot \nabla c_\mathrm{A}u\,\dif x\,\bigg|\dif t\nonumber\\&\qquad+\eps^{-2}\int_{0}^T\int_{\Omega}|u|^3\dif x\,\dif t+\int_{0}^T\bigg|\int_{\Omega} S_\eps' u\dif{x}\,\bigg|\dif t+CR^2\eps^{3N+\frac12}+C\sqrt{T} \eps^ {2N+1}\bigg)
  \nonumber                                                                                                                                                                                                             \\&\leq \theta^2\frac{R^2}{8}\eps^{2N+1}+C(R,T_0)\big(\eps^{2N+\frac32}+\sqrt{T}\eps^{2N+1}\big)
\end{alignat}
since $N\geq 3$. Therefore, if $\varepsilon\in(0,\varepsilon_0)$ and $T\in (0,T_1]$ for $\varepsilon_0>0$ and $T_1\in (0,T_0]$ so small that
\begin{align}
  \theta^2\frac{R^2}{8}\eps^{2N+1}+C(R,T_0)\big(\eps^{2N+\frac32}+\eps^{3N-1}+\sqrt{T_1}\eps^{2N+1}\big)\leq \theta^2\frac{R^2}{4}\eps^{2N+1},\label{ineq:u energy-111}
\end{align}
we conclude
\begin{alignat}{2}\nonumber
  \sup_{0\leq t\leq T}\frac{1}{2}\int_{\Omega} u^2(x,t)\dif x\, &+ \int_{0}^T\int_{\Om\setminus \Gamma_t(\delta)} |\nabla u|^2\dif x\, \dif t\\\label{ineq:u energy-2}
  &+  \int_{0}^T\int_{\Gamma_t({\delta})} |\nabla_\btau u|^2\dif x \dif t\leq \theta^2\frac{R^2}{4}\eps^{2N+1}.
\end{alignat}
\textit{Ad \eqref{eq:Error2}.} The estimate \eqref{ineq:u energy-00} implies for $0\leq T\leq T_\varepsilon$:
\begin{alignat}{2}\label{ineq:u energy-3}
  & \int_0^T \int_{\Omega}\big( |\nabla u|^2+\frac{f''(c_\mathrm{A})}{\eps^2}u^2\big) \dif{x} \dif t\nonumber \\
  &\quad \leq\frac{1}{2}\int_{\Omega} u^2|_{t=0}\dif x\, +\eps^{N+\frac12}\int_{0}^T\bigg|\int_{\Omega}\big(\we-\we|_{\Gamma}\big)\cdot \nabla c_\mathrm{A}u\dif x\,\bigg|\dif t\nonumber\\&\qquad\qquad+\eps^{-2}\int_{0}^T\int_{\Omega}|u|^3\dif x\,\dif t+\int_{0}^T\bigg|\int_{\Omega}S_\eps u\dif x\,\bigg|\dif t+CR^2\eps^{3N+\frac12} +C\sqrt{T} \eps^{2N+1}
  \nonumber                                                                                                  \\
  &\quad \leq\theta^2\frac{R^2}{8}\eps^{2N+1}+C(R,T_0)(\eps^{2N+\frac32}+\sqrt{T} \eps^{2N+1}).
\end{alignat}
Hence for sufficienlty small  $\varepsilon_0>0$ and $T_1\in (0,T_0]$ and, if we choose $\theta\in (0,1]$ so small that $C_0 T_0 \theta^2\leq 1$,  where $C_0= -\min_{s\in\R} f''(s)$,  we have
\begin{alignat}{2}\label{ineq:u energy-4}
  \int_0^T \int_{\Omega}\big( |\nabla u|^2+\frac{f''(c_\mathrm{A})}{\eps^2}u^2\big) \dif{x} \dif t\leq\theta^2\frac{R^2}{4}\eps^{2N+1}
\end{alignat}
and
\begin{alignat}{2}\label{ineq:u energy-5}
  \eps^2\int_0^T \int_{\Omega}|\partial_{\nn} u|^2\dif{x} \dif t & \leq\eps^2\int_0^T \int_{\Omega}|\nabla u|^2\dif{x} \dif t
  \nonumber                                                                                                               \\&\leq\int_0^T \int_{\Omega}\big( \eps^2|\nabla u|^2+f''(c_\mathrm{A})u^2\big) \dif{x} \dif t+C_0\int_0^T \int_{\Omega}u^2\dif{x} \dif t\nonumber\\&\leq\frac{R^2}{4}\eps^{2N+3}+C_0T_0\theta^2\frac{R^2}{2}\eps^{2N+1}
  \leq\frac{3R^2}{4}\eps^{2N+1}
\end{alignat}
if $T\leq \min (T_1,T_\varepsilon)$ and $\varepsilon\in(0,\varepsilon_0)$.

\medskip

\noindent
\textit{Ad \eqref{eq:Error3}. } Testing \eqref{eq:u} by $-\varepsilon^4\Delta u$ one obtains as in \cite[Section 4.2]{AbeFei23} 
\begin{alignat}{2}\label{ineq:u energy-17}
  \sup_{0\leq t\leq T}\frac{1}{2}\int_{\Omega} |\nabla u|^2(x,t)\dif x\, + \int_{0}^T\int_{\Omega} |\Delta u|^2\dif x\, \dif t \leq \frac{2R^2}{3}\eps^{2N-3}
\end{alignat}
for sufficiently small $\eps_0\in (0,1]$.
We note that since $\fint_\Omega \Delta u\dif x=0$ and
\begin{equation*}
  \int_{\Omega}(P_0\mathcal{N}(c_\mathrm{A},u))\Delta u\dif x =   \int_{\Omega}\mathcal{N}(c_\mathrm{A},u)P_0\Delta u\dif x =   \int_{\Omega}\mathcal{N}(c_\mathrm{A},u)\Delta u\dif x 
\end{equation*}
the proof remains the same.

Because of the maximality of $T_\varepsilon$ in \eqref{def:Teps}, the estimates \eqref{ineq:u energy-2}, \eqref{ineq:u energy-4}, \eqref{ineq:u energy-5} and \eqref{ineq:u energy-17} imply  $T_\varepsilon\geq T_1$ and then \eqref{assumptions'} holds.
Finally,  \eqref{eq:convVelocityb} is a direct consequence of \eqref{velocity decompose} and \eqref{neww1estimate}. The remaining two statements in Theorem \ref{thm:main} follow from the constructions of $c_\mathrm{A}$ and  $\ve_\mathrm{A}$. Hence the proof of Theorem \ref{thm:main} is completed.

\appendix

\section{Formally Matched Asymptotics} \label{Appendix}
We will discuss the construction of approximate solutions using the method of formally matched asymptotics. There are two expansions, namely, the inner expansion in the tubular neighborhood of $\Gamma_t$ and the outer expansion outside away from the neighborhood. The scheme is similar to that in \cite{AlikakosLimitCH} with adaptations similar to those in \cite{ChenHilhorstLogak} and
\cite{StokesAllenCahn}. It is an adjustment as presented in \cite{AbelsMarquardt2} for the ``integer order part'' to the case of a Navier-Stokes/Allen-Cahn system instead of a Stokes/Cahn-Hilliard system, which can also be found in more detail in \cite{PhDMarquardt}.
First of all, we note that, since $
  \Div(\nabla c_{\eps}\otimes\nabla c_{\eps})=\frac{1}{2}\nabla\big(|\nabla c_{\eps}|^2\big)+\Delta c_{\eps}\nabla c_{\eps}$,
we  can rewrite \eqref{eq:NSAC1}-\eqref{eq:NSAC3} as follows
\begin{alignat}{2}\label{App eq:NSAC1-new}
  \partial_t \ve^\eps +\ve^\eps\cdot \nabla \ve^\eps-\Div(2\nu(c^\eps)D\ve^\eps)  +\nabla p^\eps & = -\eps\Delta c_{\eps}\nabla c_{\eps},                    \\\label{App eq:NSAC2-new}
  \Div \ve^\eps                                                                                  & = 0,                                                      \\ \label{App eq:NSAC3-new}
  \partial_t c^\eps +\ve^\eps\cdot \nabla c^\eps                                                 & =\Delta c_{\eps}-\e^{-2}f'(c_{\eps})+\e^{-1}\lambda_\eps,
\end{alignat}
in $\Omega\times (0,T_0)$ by redefining the pressure $p^\eps$. We will only focus on the formal asymptotics of the Allen-Cahn equation since the expansion of the Navier-Stokes part is the same as in \cite{AbeFei23}. For the sake of clarity, the ansatzes of the unknowns in the Allen Cahn equations are listed as follows.
The outer expansion is given by:
\begin{equation}\label{App eq: AnsatzOut}
  c_\mathrm{A}^\pm(x,t)  =\sum_{k=0}^{N+2}\eps^{k}c_{k}^{\pm}(x,t), \  \ \mathbf{v}_\mathrm{A}^\pm(x,t) =\sum_{k=0}^{N+2}\eps^{k}\mathbf{v}_{k}^{\pm}(x,t),\ \  p_\mathrm{A}^\pm(x,t) = \sum_{k=-1}^{N+2}\eps^{k}p_{k}^{\pm}(x,t)
\end{equation}
and the mass term expansion by \begin{equation} \label{App eq: Ansatz Mass}
  \lambda_{\mathrm{A}}(t)=\sum_{k=0}^{N+1} \varepsilon^k \lambda_k(t).
\end{equation}

\subsection{Outer Expansion in $\Omega^\pm$}\label{subsec: outer exp}
We insert the above ansatzes \eqref{App eq: AnsatzOut} and \eqref{App eq: Ansatz Mass} into \eqref{App eq:NSAC3-new}.
For this, we need the Taylor expansion of $f^\prime$, which is given by
\begin{equation}
  \begin{aligned}\label{eq: taylor f}
    f^{\prime}\left(c_\varepsilon\right) & =f^{\prime}\left(c_{0}^{ \pm}\right)+\varepsilon f^{\prime \prime}\left(c_{0}^{ \pm}\right) c_{1}^{ \pm}+\sum_{k=2}^{N + 2} \varepsilon^{k}\left(f^{\prime \prime}\left(c_{0}^{ \pm}\right) c_{k}^{ \pm}+f_{k-1}\left(c_{0}^{ \pm}, \ldots, c_{k-1}^{ \pm}\right)\right) \\
                                         & +\varepsilon^{N+3} f_{N+3}^{\varepsilon}\left(c_{0}^{ \pm}, \ldots, c_{N+2}^{ \pm}\right)\eqqcolon\sum_{k\geq0} \varepsilon^{k} f_{k}^{ \pm},
  \end{aligned}
\end{equation}
where the functions $f_{k}$ are polynomials in $\left(c_{1}^{ \pm}, \ldots, c_{k}^{ \pm}\right)$. Thus we get
\begin{equation}
  \begin{aligned}
     & \sum_{k\geq0} \varepsilon^{k} \partial_{t} c_{k}^{ \pm}
    + \sum_{k\geq 0}\varepsilon^k \sum_{0\leq j \leq k} \mathbf{v}_j^\pm \cdot \nabla c_{k-j}^\pm  =-\frac{1}{\varepsilon^{2}} f^{\prime}\left(c_{0}^{ \pm}\right)-\frac{1}{\varepsilon} f^{\prime \prime}\left(c_{0}^{ \pm}\right) c_{1}^{ \pm} \\
     & +\sum_{k\geq0} \varepsilon^{k}\left(\Delta c_{k}^{ \pm}-f^{\prime \prime}\left(c_{0}^{ \pm}\right) c_{k+2}^{ \pm}-f_{k+1}\left(c_{0}^{ \pm}, \ldots, c_{k-1}^{ \pm}\right)\right)
    +\sum_{k\geq0} \varepsilon^{k-1} \lambda_{k}.
  \end{aligned}
\end{equation}
By matching the coefficients, we get
for all $\varepsilon^{k-2}$ terms with $k\geq 2$:
\begin{equation}
  \begin{aligned}
     c_{k}^{ \pm}=\frac{1}{{f^{\prime\prime}}(\pm 1)}\left[-\partial_{t} c_{k-2}^{ \pm}-f_{k-1}\left( \pm 1, c_{1}^{ \pm},\dots, c_{k-1}^{ \pm}\right)+\lambda_{k-1}\right],\,c_{0}^{ \pm}= \pm 1,\,c_{1}^{ \pm}=\frac{\lambda_{0}}{f^{\prime \prime}( \pm 1)}, \label{eq: coeff outer eq}
  \end{aligned}
\end{equation}
for all $t\in (0,T_0)$ and independent of $x$.

\subsection{Inner Expansion in $\Gamma(2\delta)$}

Close to the interface $\Gamma$, we introduced the {stretched variable}:
$$
  \rho_{\varepsilon}(x, t)\coloneqq\frac{d_{\Gamma}(x, t)-\varepsilon h_{\varepsilon}(S(x, t), t)}{\varepsilon} \quad \text { for }(x, t) \in \Gamma(2\delta), \, \varepsilon \in(0,1).
$$
Here $h_{\varepsilon}\colon \T^{1} \times[0, T_0] \rightarrow \mathbb{R}$ is a given smooth function and can heuristically be interpreted as the distance of the zero level set of $c_{\varepsilon}$ to $\Gamma$.
In the following, we write $\rho\coloneqq\rho_{\varepsilon}(x, t)$.
When referring to $\tilde{c}$ and the expansion terms, we write $\nabla=\nabla_x$ and $\Delta=\Delta_x$, for the gradient and Laplacian, resp., with respect to $x$.
The derivatives $\partial_t^{\Gamma} h_{\varepsilon}(x, t)$, $\nabla^{\Gamma} h_{\varepsilon}(x, t)$, $\Delta^{\Gamma} h_{\varepsilon}(x, t)$, $\partial_{t}^2 h_{\varepsilon}(x, t)$ are for $(x, t) \in \Gamma(2 \delta)$ to be understood in the sense of \eqref{Prelim:1.13}-\eqref{Prelim:1.12}.
In $\Gamma(2\delta)$ use the Ansatz
\begin{equation}
  \tc_{\mathrm{A}}^\mathrm{in}(x, t)=\tilde{c}_{\mathrm{A}}^\mathrm{in}\left(\frac{d_\Gamma(x, t)}{\varepsilon}-h_{\varepsilon}(S(x, t), t), x, t\right)
\end{equation}
for some $\tilde{c}_\mathrm{A}^\mathrm{in}: \mathbb{R} \times \Gamma(2\delta) \rightarrow \mathbb{R}$ and analogously $\tv_\mathrm{A}^{\mathrm{in}}(x,t)=\tilde{\ve}_\mathrm{A}^{\mathrm{in}}(\rho, x, t)$, $\tp_\mathrm{A}^{\mathrm{in}}(x,t)=\tilde{p}_\mathrm{A}^{\mathrm{in}}(\rho, x, t)$.
Furthermore, we assume that we have the inner expansions
\begin{align}
  \tilde{c}_\mathrm{A}^\mathrm{in}(\rho, x, t) & = \sum_{k=0}^{N+2} \varepsilon^k c_k(\rho, x, t) &  & \text{for all $(\rho, x, t) \in \mathbb{R} \times \Gamma(2 \delta)\times [0,T_0]$}, \label{eq:c in} \\
  h_{\varepsilon}(s, t)                        & = \sum_{k=0}^{N} \varepsilon^k h_{k+1}(s, t)     &  & \text {for all } s \in \T^{1},\, t \in[0, T_0] \label{eq: h as fin power ser},                      \\
  \tilde{\ve}_\mathrm{A}^{\mathrm{in}}(\rho, x, t)     & = \sum_{k=0}^{N+2}\eps^k \mathbf{v}_k(\rho,x,t)  &  & \text{for all $(\rho, x, t) \in \mathbb{R} \times \Gamma(2 \delta)\times [0,T_0]$},                 \\
  \tilde{p}_\mathrm{A}^{\mathrm{in}}(\rho, x, t)     & =\sum_{k=-1}^{N+1}\eps^k p_k(\rho,x,t)           &  & \text{for all $(\rho, x, t) \in \mathbb{R} \times \Gamma(2 \delta)\times [0,T_0]$}\label{eq:p in}
\end{align}
where $N\in\N$, $c_k\colon\mathbb{R} \times \Gamma(2 \delta) \rightarrow \mathbb{R}$ and $h_{k+1}: \T^{1} \times[0, T_0] \rightarrow \mathbb{R}$ are smooth functions for $k \in\{0,\ldots, N\}$.
In order to join the inner and outer expansions, it is necessary to use the inner-outer matching conditions, which control all the derivatives of the coefficients, given by:

\begin{defn}[Inner-outer matching condition]~\\ Let $m,n,l\geq 0$. For all $k \geqslant 0$ and some constants $\alpha,C>0$ and all $\rho>0, $ we define the \emph{inner-outer matching condition} for $c_k$ as
  \begin{equation}
    \sup_{(x,t)\in\Gamma(2\delta)}|\partial_x^m \partial_t^n \partial_\rho^l\left[c_k\left( \pm \rho, x,t\right)-c_k^{ \pm}(x, t)\right]| \leqslant C e^{-\alpha \rho}\label{eq: matching cond c} .
  \end{equation}\end{defn}
In the new coordinates $(\rho, x, t)$, the Allen-Cahn equation is
\begin{align}
  \begin{split}
    \partial_\rho^2 \tilde{c}_{\varepsilon}-f^{\prime}\left(\tilde{c}_{\varepsilon}\right)= & \varepsilon\left(\partial_\rho \tilde{c}_{\varepsilon} \partial_t d_{\Gamma}+\partial_\rho \tilde{c}_{\varepsilon} {\mathbf{v}_\varepsilon} \cdot \nabla d_{\Gamma}-2 \nabla \partial_\rho \tilde{c}_{\varepsilon} \cdot \nabla d_{\Gamma}-\partial_\rho \tilde{c}_{\varepsilon} \Delta d_{\Gamma}+\lambda_\varepsilon\right) \\
                                                                                            & +\varepsilon^2\left(2 \nabla \partial_\rho \tilde{c}_{\varepsilon} \cdot \nabla^{\Gamma} h_{\varepsilon}+\partial_\rho \tilde{c}_{\varepsilon} \Delta^{\Gamma} h_{\varepsilon}-\partial_\rho^2 \tilde{c}_{\varepsilon}\left|\nabla^{\Gamma} h_{\varepsilon}\right|^2-\Delta \tilde{c}_{\varepsilon}\right.                    \\
                                                                                            & \left.+\partial_t \tilde{c}_{\varepsilon}-\partial_\rho \tilde{c}_{\varepsilon} \partial_t^{\Gamma} h_{\varepsilon}+{\mathbf{v}_\varepsilon} \cdot\left(\nabla \tilde{c}_{\varepsilon}-\partial_\rho \tilde{c}_{\varepsilon} \nabla^{\Gamma} h_{\varepsilon}\right)\right) , \label{eq: PDE in new coordinates}
  \end{split}
\end{align}
which should hold in $$S^\varepsilon = \left\{(\rho,x,t)\in\R\times\Gamma(2\delta)|\rho= \frac{d_\Gamma(x,t)}{\varepsilon}-h_\varepsilon(S(x,t),t)\right\}.$$

\begin{rem}[normalization of inner coefficients $c_k$]~\\
  We normalize the coefficients $c_k$ such that $c_k(0, x, t)=0$ for all $(x, t) \in \Gamma(2 \delta)$ and $ k \geqslant 0$.
\end{rem}

It seems reasonable at this point to conclude the discussion by inserting the Ansatz \eqref{eq:c in}-\eqref{eq:p in} into the PDE \eqref{eq: PDE in new coordinates} from before in a manner consistent with the traditional approach of matched asymptotic expansions. However, this would yield solutions that grow as polynomials of $\rho$ as $|\rho|\to\infty$. This was demonstrated in the reference \cite{DeMottoniSchatzman}. It follows that our matching conditions, as in equation \eqref{eq: matching cond c}, cannot be fulfilled. It is substantial to recall that equation \eqref{eq: PDE in new coordinates} must only hold in $S^\varepsilon$. Consequently, we can add a function that vanishes in $S^\varepsilon$. This function can be selected in such a way that it ensures the inner outer matching condition. The following lemma states the sufficient condition for \eqref{eq: PDE in new coordinates} to have a unique solution that satisfies a matching condition.

\begin{lem}[optimal profile, solvability condition I]\label{lem: solv cond}~\\ Let be $f$ as before.
  The ODE
  \begin{equation}
    \theta_{0}^{\prime \prime}-f'\left(\theta_{0}\right)=0 \text { on } \mathbb{R},\quad \theta_{0}( \pm \infty)= \pm 1,\quad \theta_{0}(0)=0\label{eq: optimal profile}
  \end{equation}
  has a unique solution $\theta_0$, the \emph{optimal profile} satisfying $\partial_\rho^k (\theta_0\pm 1) = {O}(\mathrm{e}^{-\alpha |\rho|})$with  $\alpha\coloneqq\min \sqrt{f^{\prime\prime}(\pm1)}$, $k\in\N_0$ and $\rho\to \pm \infty$. Let $\mathcal{L}$ be defined as
  $$
    \mathcal{L} v\coloneqq -v^{\prime \prime}+f^{\prime \prime}\left(\theta_{0}\right) v .
  $$
  Assume that a function $h(\rho, s, t)$ satisfies, as $\rho \rightarrow \pm \infty$,
  \begin{equation}
    \partial_{\rho}^{m} \partial_{s}^{n} \partial_{t}^{l}\left[h(\rho, s, t)-h^{ \pm}(t)\right]=O\left(\mathrm{e}^{-\alpha |\rho|}\right)
  \end{equation}
  for all $(m, n, l) \in \mathbb{N}_0^{3}$ and $(s, t)$ in $\T^{1}\times[0,T_0]$. Then
  $$
    \mathcal{L} v=h(\cdot, s, t)\quad \text{ in } \mathbb{R} \text{ with } v(0, s, t)=0
  $$
  has a unique bounded solution $v(\rho, s, t)$ if and only if for all $(s, t) \in\T^{1} \times[0,T_0]$ it holds
  \begin{equation}
    \int_{\mathbb{R}} h(\rho, s, t) \theta_{0}^{\prime}(\rho) \dif\rho=0 .\tag{\text{\textit{solvability condition}}}   \label{eq: solv cond}
  \end{equation}
  Moreover, if the solution exists, then it satisfies for all $(m, n, l) \in \mathbb{N}_0^{3}$ and $(s, t) \in \T^{1} \times[0, T_0]$:
  $$
    \partial_{l}^{m} \partial_{s}^{n} \partial_{t}^{l}\left[v(\rho, s, t)-\frac{h^{ \pm}(t)}{f^{\prime\prime}( \pm 1)}\right]={O}\left(\mathrm{e}^{-\alpha |\rho|}\right) \text{ as } \rho\rightarrow \pm \infty .
  $$
\end{lem}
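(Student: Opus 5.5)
The first step is the optimal profile. Equation \eqref{eq: optimal profile} is autonomous, so multiplying by $\theta_0'$ and integrating from $-\infty$ gives the first integral $\tfrac12(\theta_0')^2=f(\theta_0)$; here we use $f(\pm1)=0$, which is the normalization implicit in $\sigma=\int_{-1}^1\sqrt{2f(s)}\dif s$. We then take the increasing branch $\theta_0'=\sqrt{2f(\theta_0)}$ with $\theta_0(0)=0$. This is solved by separation of variables, $\rho=\int_0^{\theta_0}(2f(s))^{-1/2}\dif s$. The integral diverges logarithmically at $\pm1$ because $f''(\pm1)>0$, and this shows that $\theta_0$ exists on all of $\R$ and tends to $\pm1$.

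For uniqueness, note that any solution with the limits $\pm1$ must satisfy the same first integral, since the energy is constant and vanishes at infinity. Such a solution is strictly monotone, because $\theta_0'$ can only vanish where $f(\theta_0)=0$, i.e.\ at $\pm1$. The condition $\theta_0(0)=0$ then fixes the translation. Exponential decay follows by linearizing at $\pm1$: for instance $w=1-\theta_0$ satisfies $w'=-\sqrt{2f(1-w)}=-\sqrt{f''(1)}\,w(1+O(w))$, which gives $w=O(e^{-\sqrt{f''(1)}\rho})$. Differentiating the ODE repeatedly transfers this decay to all derivatives. Since $\alpha=\min\sqrt{f''(\pm1)}$, the bound holds with rate $\alpha$.

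For the linear problem, $\mathcal L$ is a Schrödinger operator whose potential $f''(\theta_0)$ tends to $f''(\pm1)>0$ exponentially fast. Differentiating the profile equation gives $\mathcal L\theta_0'=0$, and $\theta_0'>0$ decays exponentially. The homogeneous equation $\mathcal L v=0$ has a second solution $\eta=\theta_0'\int_0^\rho(\theta_0')^{-2}$, which grows like $e^{\sqrt{f''(\pm1)}|\rho|}$. Hence the bounded kernel is exactly $\operatorname{span}\{\theta_0'\}$.

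\emph{Necessity} of the solvability condition: if $v$ is a bounded solution, then by the ODE and the asymptotics $v$ has bounded derivatives. Multiplying by $\theta_0'$ and integrating by parts twice, with vanishing boundary terms because $\theta_0',\theta_0''$ decay exponentially, yields $\int h\theta_0'=\int v\,\mathcal L\theta_0'=0$.

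\emph{Sufficiency}: variation of constants with the Wronskian $W(\theta_0',\eta)=1$ gives
\begin{equation*}
v(\rho)=\eta(\rho)\int_{-\infty}^{\rho}\theta_0' h\,\dif\rho'-\theta_0'(\rho)\int_0^\rho\eta\, h\,\dif\rho'+c\,\theta_0'(\rho).
\end{equation*}
As $\rho\to-\infty$, boundedness is clear. As $\rho\to+\infty$, the condition $\int_\R h\theta_0'=0$ turns the first integral into $-\int_\rho^\infty\theta_0'h$, which is $O(e^{-\sqrt{f''(1)}\rho})$ and compensates the growth of $\eta$. Using also $\eta\theta_0'$ bounded, one obtains that $v$ is bounded. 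The constant $c$ is fixed uniquely by $v(0)=0$, because $\theta_0'(0)\neq0$; uniqueness then follows from the kernel description.

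For the asymptotics, set $w=v-h^\pm/f''(\pm1)$ near $\pm\infty$. It solves $-w''+f''(\pm1)w=g$, where $g=(h-h^\pm)-(f''(\theta_0)-f''(\pm1))v$ is $O(e^{-\alpha|\rho|})$. Bounded solutions of this constant-coefficient equation decay like $e^{-\alpha|\rho|}$; at the critical rate one may need to shrink $\alpha$ slightly, which I would accept, as the paper does by allowing generic $\alpha$. Derivatives in $\rho$ follow from the equation. For derivatives in $s$ and $t$, I would differentiate the equation $\mathcal L v=h$ in the parameters: $\partial_s^n\partial_t^l v$ solves the same problem with right-hand side $\partial_s^n\partial_t^l h$, which satisfies the solvability condition by differentiating the condition, and vanishes at $\rho=0$. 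The explicit formula also gives smooth parameter dependence.

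The main obstacle is getting the exponential rate uniform in the parameters and exactly $\alpha$, since the variation-of-constants integrals produce borderline terms like $\rho e^{-\alpha\rho}$ when the two decay rates coincide. If needed, I would handle this by allowing a slightly smaller $\alpha$.
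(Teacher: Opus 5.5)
The paper gives no argument for this lemma; it only cites \cite[Lemmas~2.6.1 and 2.6.2]{PromotionStefan}. Your proposal is a correct, self-contained proof along the classical route. For the profile you use the first integral $\tfrac12(\theta_0')^2=f(\theta_0)$ and separation of variables. For the linear problem you use reduction of order, $\eta=\theta_0'\int_0^\rho(\theta_0')^{-2}$ with $W(\theta_0',\eta)=1$, to identify the bounded kernel as $\operatorname{span}\{\theta_0'\}$, and then variation of constants for existence, uniqueness and smooth dependence on $(s,t)$. The citation buys brevity. Your version shows which hypotheses are actually used. Besides $f(\pm1)=0$ (after adding a constant), you need $f>0$ on $(-1,1)$; this is part of ``double well'' but is not in the paper's list of assumptions. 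You also need that a solution cannot reach $\pm1$ at a finite point, by ODE uniqueness; this is what gives strict monotonicity in your uniqueness step. One small correction: since $\theta_0'\eta'-\theta_0''\eta=1$, your formula solves $\mathcal L v=-h$. The correct one is $v=\theta_0'\int_0^\rho\eta h\dif\rho'-\eta\int_{-\infty}^\rho\theta_0'h\dif\rho'+c\,\theta_0'$, with $c=0$ because $\eta(0)=0$.

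Your caveat about the rate is necessary, not merely a convenience: the sharp-rate conclusion is false as stated. Since $f$ is even, $\sqrt{f''(1)}=\sqrt{f''(-1)}=\alpha$. Take $v=\chi(\rho)\rho e^{-\alpha\rho}$, where $\chi$ is smooth, vanishes on $(-\infty,\tfrac12]$ and equals $1$ on $[1,\infty)$, and set $h\coloneqq\mathcal L v$. For $\rho\geq1$ one gets $h=2\alpha e^{-\alpha\rho}+(f''(\theta_0)-f''(1))\rho e^{-\alpha\rho}$. Hence $h$ satisfies the hypothesis with $h^\pm=0$, and $v$ is the unique bounded solution with $v(0)=0$, yet $v\neq O(e^{-\alpha\rho})$.

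Your constant-coefficient argument actually yields $O((1+|\rho|)e^{-\alpha|\rho|})$. This gives decay at any rate $\beta<\alpha$, or at rate $\beta$ if the data decay at a rate $\beta<\alpha$; that is the correct formulation. It suffices for the paper, whose matching conditions and classes $\mathcal R_{k,\alpha}$ only require some exponential rate. The profile part, by contrast, does hold with the sharp $\alpha$, as your $\log w$ argument shows.
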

\begin{proof} The part with the optimal profile $\theta_0$ follows from \cite[Lemma 2.6.1]{PromotionStefan} and the rest from \cite[Lemma 2.6.2]{PromotionStefan}.
\end{proof}
\begin{rem}\label{rem: theta odd}
  Since $f$ is even, $f'$ is odd. The ODE of $\theta_0$ and its conditions are invariant under reflection; by uniqueness this yields $\theta_0(-\rho)=-\theta_0(\rho)$, i.e. the latter is an odd function.
\end{rem}

We see that the above lemma not only gives us the existence of a solution to \eqref{eq: PDE in new coordinates}. After matching the coefficients in front of the term some order in $\eps$, it reduces the PDE into an ODE in the stretched variable $\rho$ by fixing $(x,t)$. Nevertheless, the \eqref{eq: solv cond} has to be fulfilled. For this reason, we add an auxiliary function $g_{\varepsilon}(x, t)$ for $(x, t) \in \Gamma(2 \delta)$. Thus, (\ref{eq: PDE in new coordinates}) becomes
\begin{equation}
  \begin{aligned}
    \partial_\rho^2 \tilde{c}_{\varepsilon}-f^{\prime}\left(\tilde{c}_{\varepsilon}\right)= & \varepsilon\left(\partial_\rho \tilde{c}_{\varepsilon} \partial_t d_{\Gamma}+\partial_\rho \tilde{c}_{\varepsilon} {\mathbf{v}_\varepsilon} \cdot \nabla d_{\Gamma}-2 \nabla \partial_\rho \tilde{c}_{\varepsilon} \cdot \nabla d_{\Gamma}-\partial_\rho \tilde{c}_{\varepsilon} \Delta d_{\Gamma}-\lambda_\varepsilon\right) \\
                                                                                            & +\varepsilon^2\left(2 \nabla \partial_\rho \tilde{c}_{\varepsilon} \cdot \nabla^{\Gamma} h_{\varepsilon}+\partial_\rho \tilde{c}_{\varepsilon} \Delta^{\Gamma} h_{\varepsilon}-\partial_\rho^2 \tilde{c}_{\varepsilon}\left|\nabla^{\Gamma} h_{\varepsilon}\right|^2-\Delta \tilde{c}_{\varepsilon}\right.                    \\
                                                                                            & \left.+\partial_t \tilde{c}_{\varepsilon}-\partial_\rho \tilde{c}_{\varepsilon} \partial_t^{\Gamma} h_{\varepsilon}+{\mathbf{v}_\varepsilon} \cdot\left(\nabla \tilde{c}_{\varepsilon}-\partial_\rho \tilde{c}_{\varepsilon} \nabla^{\Gamma} h_{\varepsilon}\right)\right)                                                    \\
                                                                                            & +\varepsilon g_{\varepsilon} \theta_0^{\prime}(\rho)\left(d_{\Gamma}-\varepsilon\left(\rho+h_{\varepsilon}\right)\right).\label{eq: PDE in new coord with aux fct}
  \end{aligned}
\end{equation}
It is evident that the added term in the last line is zero in $S^\varepsilon$. After matching the coefficients in front of powers $\eps^k$, we consider (\ref{eq: PDE in new coord with aux fct}) as ODE in $\rho \in \mathbb{R}$ where $(x, t) \in \Gamma(2 \delta)$ are seen as fixed parameters. This idea is inspired by \cite[pg. 179]{PromotionStefan}.
We assume that the auxiliary function has an expansion of the form
\begin{equation} \label{eq App: g}    g_{\varepsilon}(x, t) = \sum_{k=0}^{N+1} g_{k}(x, t) \varepsilon^{k}
  \qquad \text{for } (x, t) \in \Gamma(2 \delta), \eps \in (0,1] \text {. }
\end{equation}
The subsequent equations hold for $(x,t)\in\Gamma(2\delta)$ and for $\rho\in\R$. For the $\varepsilon^0$ terms, we have
\begin{equation}
  \partial_\rho^{2} c_{0}-f^{\prime}(c_{0})=0\label{eq: 0 order inner expansion c},
\end{equation}
which is solved by $ c_{0}=\theta_{0}$ with the optimal profile $\theta_0$ (cf.~Lemma \ref{lem: solv cond}). The $\varepsilon^1$-terms yield
\begin{equation*}%\label{eq: 1st order c}
  \partial_\rho^2 c_1-f^{\prime \prime}(\theta_0) c_1  =\theta_0^{\prime}(\rho)\left(\partial_t d_{\Gamma}+\mathbf{v}_0 \cdot \nabla d_{\Gamma}-\Delta d_{\Gamma}+ g_0 d_{\Gamma}\right)-\lambda_0.
\end{equation*}
Testing the right-hand side with $\theta_0'$, we obtain
$$
  \sigma (\partial_t d_\Gamma + \mathbf{v}_0 \cdot \nabla d_\Gamma- \Delta d_\Gamma+ g_0 d_{\Gamma})-2\lambda_0 = 0
$$
because of $\int_\mathbb{R}(\partial_\rho^2 c_1 + f''(\theta_0)c_1)\theta_0'\dif \rho = \int_\mathbb{R}c_1 \mathcal{L}\theta _0'\dif \rho = 0$. On $\Gamma$ this equation is equivalent to \eqref{eq:Limit5} (for a suitable time dependent constant $\lambda_0$). On $\Gamma(2\delta)\backslash \Gamma$ this equation is equivalent to
\begin{equation}
  g_0=-\frac1{d_\Gamma}\left(\partial_t d_\Gamma + \mathbf{v}_0 \cdot \nabla d_\Gamma- \Delta d_\Gamma -\frac{2}{\sigma }\lambda_0\right) \label{eq:g0}
\end{equation}
such that \eqref{eq: solv cond} is fulfilled. Here $g_0$ extends to a smooth function on $\Gamma(2\delta)$ since the latter bracket vanishes on $\Gamma$. Altogether we obtain
\begin{equation}\label{eq: 1st order c}
  \partial_\rho^2 c_1-f^{\prime \prime}(\theta_0) c_1   =-\left(1-\frac{2}{\sigma}\theta_0'(\rho)\right)\lambda_0,
\end{equation}
In particular, we see $c_1=c_1(\rho,t)$ independent of $x$. Moreover, $c_1$ is even with respect to $\rho$ since the right-hand side and $f''(\theta_0)$ are even with respect to $\rho\in\R$. 
\noindent For the $\varepsilon^2$ order, we have:\begin{align}
  \partial_{\rho}^{2} c_{2}-f^{\prime \prime}(\theta_{0}) c_{2} & =B^{1}-\lambda_{1}+\tfrac12f^{\prime\prime\prime}(\theta_0)c_1^2\label{eq: c2 ode},
\end{align}
where
\begin{align*}
  B^{1} & =\theta_{0}^{\prime}(\rho)\left(\Delta^{\Gamma} h_{1}-\ve_0\cdot \nabla^\Gamma h_1-\ve_1\cdot \nabla d_\Gamma-\partial_{t}^{\Gamma} h_{1}-g_{0} h_{1}+g_{1} d_{\Gamma}\right)-\theta_{0}^{\prime \prime}(\rho) |\nabla^{\Gamma} h_{1}|^2 \\
        & \quad +\tilde{B}^{1}(c_0,c_1,\ve_0).
\end{align*}
The remainder $\tilde{B}^{1}$ has an exponential decay as $|\rho| \rightarrow \infty$ by the inner outer matching condition and is given by
\begin{equation}
  \label{eq:Btilde 1}
  \tilde{B}^1= \partial_\rho c_1(\rho)(\partial_t d_\Gamma+\mathbf{v}_0 \cdot\nabla d_\Gamma+\Delta d_\Gamma)-2\rho\theta_0'(\rho)g_0.
\end{equation} Testing with $\theta_0'(\rho)$, yields the solvability condition:
\begin{equation} \label{eq: g1dGamma}
  \begin{aligned}
    g_1 d_\Gamma & = \partial_{t}^{\Gamma} h_{1}+\ve_0\cdot \nabla^\Gamma h_1+\ve_1\cdot \nabla d_\Gamma-\Delta^{\Gamma} h_{1}+g_{0} h_{1}+2\frac{\lambda_1}\sigma - \frac1\sigma\int_{\R} \left(\tilde{B}^1 +\tfrac12 f'''(\theta_0)c_1^2\right)\theta_0'\, d\rho \\
                 & =\partial_{t}^{\Gamma} h_{1}+\ve_0\cdot \nabla^\Gamma h_1+\ve_1\cdot \nabla d_\Gamma-\Delta^{\Gamma} h_{1}+g_{0} h_{1}+2\frac{\lambda_1}\sigma,
  \end{aligned}
\end{equation}
because the last integral vanishes due to the evenness of $c_1$.
On $\Gamma$ this determines the evolution equation for $h_1$ and on $\Gamma(2\delta)\setminus \Gamma$ this determines $g_1$. Then \eqref{eq: c2 ode} simplifies to:
\begin{equation}
  \label{eq: c2 ode simplified}
  \partial_\rho ^2c_2 -f''(\theta_0)c_2 = -\left(1-\frac{2}{\sigma}\theta_0'(\rho)\right)\lambda_1+\frac{1}{2}f'''(\theta_0(\rho))c_1^2(\rho)-\theta_0''(\rho)|\nabla^\Gamma h_1|^2+\tilde{B}^1
\end{equation}
In general, for $\varepsilon^k$ terms with $k\geq 3$:
\begin{align}
  \partial_{\rho}^{2} c_{k}-f^{\prime \prime}\left(\theta_{0}\right) c_{k} & =B^{k-1}-\lambda_{k-1}+f_{k-1}(c_0,\dots,c_{k-1})+\partial_{t}c_{k-2}\label{eq: coeff inner eq},
\end{align}
where
$$
  \begin{aligned}
    B^{k-1} & =\theta_{0}^{\prime}(\rho)\left(\Delta^{\Gamma} h_{k-1}-\partial_{t}^{\Gamma} h_{k-1}\right)-2 \theta_{0}^{\prime \prime}(\rho) \nabla^{\Gamma} h_{1} \cdot \nabla^{\Gamma} h_{k-1}-\theta_{0}^{\prime}(\rho) g_{0} h_{k-1} \\
            & +\theta_{0}^{\prime}(\rho) g_{k-1} d_{\Gamma}+\tilde{B}^{k-1}
  \end{aligned}
$$
and $\tilde{B}^{k-1}$ depend on $c_0,\dots,c_{k-1}$, $\mathbf{v}_0,\dots,\mathbf{v}_{k-2}$ and $h_1,\dots, h_{k-2}$ determined by an induction argument, which have an exponential decay as $|\rho| \rightarrow \infty$ by the inner outer matching condition.

\begin{rem}
  \label{rem: c1 even} Recall that the optimal profile $\theta_0$ is odd and thus $\theta_0'$ is even in $\rho$. Moreover, in \eqref{eq: 1st order c} the right-hand side does only depend on $\rho$ via $\theta_0'(\rho)$ and is therefore even. The linear differential operator $\mathcal{L}c_1 =  -\partial_\rho^2 c_1 + f''(\theta_0(\rho))\,c_1$ has even coefficients and hence preserves parity. Consequently, the unique bounded solution $\hat c_1(\rho,s,t)$ is even in $\rho$.
\end{rem}

\subsection{Existence of the Expansion Terms}
\subsubsection{Solving Zeroth Order Terms}

The most important contribution in the expansion, is the zeroth order term since in the sharp interface limit, we take $\varepsilon\to 0$. The only term left in the limit will be the zeroth order term. 
\begin{lem}[zeroth order terms] We define the terms of
  \begin{itemize}

  \item \textit{the outer expansion} as  $c_{0}^{ \pm}(x, t)= \pm 1$ for $(x, t) \in \Omega^{ \pm} \cup \Gamma_{T_0}(2 \delta)$.
    \item \textit{the inner expansion} as $c_{0}\left(\rho, x, t\right)=\theta_{0}(\rho)$ for all $\left(\rho,x, t\right) \in \mathbb{R} \times \Gamma(2 \delta)$.
  \end{itemize}
  Then $c_0^\pm$ and $c_0$ fulfill the outer and inner equation respectively in zeroth order. They are smooth and bounded and satisfy the inner-outer matching condition \eqref{eq: matching cond c} for $k=0$.
\end{lem}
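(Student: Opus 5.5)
The lemma is a direct verification, and I will carry it out in three parts: the outer equation, the inner equation, and the matching condition together with regularity.

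\emph{Outer equation.} I set $c_0^\pm\equiv\pm1$. The outer equation at order $\eps^{-2}$ is $f'(c_0^\pm)=0$. By the assumptions on the double well potential, $f'(\pm1)=0$, so this holds. All derivatives of $c_0^\pm$ vanish, and the $\eps^{-2}$ term is the only one in which $c_0^\pm$ appears alone. Hence $c_0^\pm$ satisfies the outer equation to zeroth order and is trivially smooth and bounded on $\Omega^\pm\cup\Gamma_{T_0}(2\delta)$.

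\emph{Inner equation.} I set $c_0(\rho,x,t)=\theta_0(\rho)$. The $\eps^0$ terms of \eqref{eq: PDE in new coord with aux fct} give exactly \eqref{eq: 0 order inner expansion c}. By Lemma~\ref{lem: solv cond}, the optimal profile problem \eqref{eq: optimal profile} has a unique solution with $\theta_0(0)=0$. So $c_0$ solves the zeroth order inner equation and also satisfies the normalization $c_0(0,x,t)=0$. Since $c_0$ does not depend on $(x,t)$, it is smooth in all variables; its smoothness in $\rho$ follows from the ODE, because $f\in C^\infty$. It is bounded because $|\theta_0|\le1$, which follows from the phase-plane identity $\tfrac12\theta_0'^2=f(\theta_0)-f(\pm1)$ and monotonicity. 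I will quote this fact from \cite{PromotionStefan} rather than reprove it.

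\emph{Matching condition.} For \eqref{eq: matching cond c} with $k=0$, every derivative $\partial_x^m\partial_t^n$ with $m+n\ge1$ of $c_0(\pm\rho,x,t)-c_0^\pm(x,t)$ vanishes identically. It therefore remains to bound $\partial_\rho^l[\theta_0(\pm\rho)\mp1]$ by $Ce^{-\alpha\rho}$ for $\rho>0$. This is exactly the decay statement $\partial_\rho^k(\theta_0\mp1)=O(e^{-\alpha|\rho|})$ in Lemma~\ref{lem: solv cond}, with $\alpha=\min\sqrt{f''(\pm1)}$. On any compact $\rho$-interval the bound holds trivially, so the constant can be taken uniform in $\rho>0$.

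The only step with any real content is this exponential decay, since it rests on the linearization $f''(\pm1)>0$ near the wells. Because it is already part of Lemma~\ref{lem: solv cond}, I do not expect any genuine obstacle.
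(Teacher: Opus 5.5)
Your proposal is correct and follows the same route as the paper. The paper's proof consists only of the observation that the matching condition \eqref{eq: matching cond c} for $k=0$ reduces to the exponential decay $\partial_\rho^m[\theta_0(\rho)\mp 1]=O(e^{-\alpha|\rho|})$ of the optimal profile, since the $x$- and $t$-derivatives vanish. Your explicit checks that $f'(\pm1)=0$ gives the outer equation and that $\theta_0$ solves \eqref{eq: 0 order inner expansion c} are the routine parts the paper leaves implicit. Boundedness of $\theta_0$ already follows from continuity and the limits $\pm1$, so the phase-plane argument is not needed.
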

\begin{proof}
  \textit{Ad (\ref{eq: matching cond c})}. The inner outer matching condition is fulfilled  since $\partial_{\rho}^{m}[\theta_0(\rho)\pm 1] = {O}(\mathrm{e}^{-\alpha \rho}) $ as $\rho \to \infty$ in $\mathrm{L}^\infty(\Gamma(2\delta))$, $m\in\N_0$.
\end{proof}

%%%%%%%%%%%%%%%%%%%%%%%%%%%%%%%%%%%%%%%%%%%%%%%%%%%%%%%%%%%%%%%%%%%%%%%%%%%%%%%%%%%%%%%

\subsubsection{Determination of the Higher Order Terms}
In this subsection, we calculate the exact form of the $g_i$ for $i\geq 1$ as in \eqref{eq App: g}. We have the freedom to choose $g_i$, such that \eqref{eq: solv cond} for \eqref{eq: coeff inner eq} is fulfilled. By construction, the terms with $g_i d_\Gamma$ in \eqref{eq: coeff inner eq} vanish on the interface $\Gamma$. Such functions can be extended in the normal direction of the interface $\Gamma$ as follows from \cite[pg. 428]{StokesAllenCahn}:
\begin{lem}[extension on the interface]\label{lem: extension interface} If $g\colon \Gamma(2\delta)\to \R$ is smooth in normal direction and vanishes on the interface $\Gamma$, then
  \begin{equation}
    \tilde{g}(x,t) = \begin{cases} \frac{g(x.t)}{d_\Gamma (x,t)} & \text{if }(x.t)\in\Gamma(2\delta)\backslash\Gamma, \\
              \partial_\mathbf{n} g(x,t)    & \text{if }(x,t)\in\Gamma
    \end{cases}
  \end{equation}
  is smooth in the normal direction. Moreover, $\tilde{g}$ is smooth if $g$ is also smooth.
\end{lem}
\iffalse
  \begin{proof} For the relation on the interface $\Gamma$, we use \eqref{eq:1.4} and Taylor expand the function around $r=0$:
    \begin{equation*}
      \frac{g(x,t)}{d_\Gamma} = \frac{\tilde{g}(r,s,t)-\tilde{g}(0,s,t)}{r} = \partial_r \tilde{g}(0,s,t)+{O}(r) \text{ for }r\to 0.
    \end{equation*}
    Since $\partial_r \tilde{g}(0,s,t) = \partial_\mathbf{n}{g}(P_{\Gamma_t}(x),t)$, we get the desired identity. The smoothness of $\tilde{g}$ follows from
    $$\begin{aligned}
        \int_0^1 \partial_r g(\theta r, s, t) \dif\theta & =\frac{g(r, s, t)}{r}-\frac{1}{r^2} \int_0^r g(u, s, t) \dif u                                                                                \\
                                                         & =\tilde{g}(r, s, t)-\frac{1}{r} \int_0^r \tilde{g}(u, s, t) \dif u                                                                            \\
                                                         & =\int_0^r\underbrace{ \frac{\tilde{g}(r, s, t)-\tilde{g}(u, s, t)}{r}}_{=\ \partial_r \tilde{g}( u, s, t)+{O}(r) \text{ for $r\to 0$}} \dif u \\
      \end{aligned}
    $$
    since the left hand side is smooth.
  \end{proof}\fi

\begin{lem}[higher order terms]\label{lem: higher order terms}
  Let $k \in\{1, \ldots, N+2\}$ and the coefficients $\lambda_{k-1}$ of the asymptotic expansion $\lambda_\mathrm{A}$ (cf.~Theorem \ref{lem: coeff lambda}) be given. Furthermore, the $h_{i-1}$, $i\in \{2,\ldots, N+2\}$, satisfies equation \eqref{eq: time der of h} as demonstrated in the following proof. Then there are unique smooth functions $c_{k}$ and $c_{k}^{ \pm}$ which are bounded on their respective domains such that, for the $k$-th order the outer equations \eqref{eq: coeff outer eq}, the inner equations \eqref{eq: coeff inner eq}, the inner-outer matching conditions \eqref{eq: matching cond c} are satisfied.
\end{lem}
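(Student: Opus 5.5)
The plan is induction on $k$. The inductive hypothesis is that $c_0,\dots,c_{k-1}$, $c_0^\pm,\dots,c_{k-1}^\pm$, $h_1,\dots,h_{k-1}$ and $g_0,\dots,g_{k-2}$ are already constructed, smooth, bounded, and satisfy \eqref{eq: matching cond c}. The base cases are the zeroth order lemma and the explicit treatment of \eqref{eq: 1st order c} and \eqref{eq: c2 ode simplified}.

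\textbf{Outer part.} For order $k$ we first define $c_k^\pm$ by \eqref{eq: coeff outer eq}. This is an explicit algebraic formula in the previously determined $c_j^\pm$ and $\lambda_{k-1}$. By induction, each $c_k^\pm$ is smooth and depends only on $t$. This gives existence and uniqueness of the outer terms directly, since $f''(\pm1)>0$.

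\textbf{Inner part.} We then consider \eqref{eq: coeff inner eq} as an ODE $\mathcal{L}c_k=-(\text{RHS})$ in $\rho$, with $(x,t)\in\Gamma(2\delta)$ as parameters.

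First we check the hypotheses of Lemma~\ref{lem: solv cond}. The right-hand side must converge exponentially as $\rho\to\pm\infty$ to a limit $h^\pm$. The terms $B^{k-1}$ involving $\theta_0',\theta_0''$ and $\tilde B^{k-1}$ decay exponentially by the inductive matching conditions. The remaining terms $-\lambda_{k-1}+f_{k-1}(c_0,\dots,c_{k-1})+\partial_t c_{k-2}$ converge, by \eqref{eq: matching cond c} for $j<k$, to $-\lambda_{k-1}+f_{k-1}(c_0^\pm,\dots)+\partial_t c_{k-2}^\pm$, again with exponential rate. A Taylor argument for the polynomial $f_{k-1}$ preserves these exponential rates for all derivatives.

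Next we impose the solvability condition $\int_\R(\cdots)\theta_0'\,d\rho=0$. This gives one scalar equation of the form
\[
\sigma\bigl(\partial_t^\Gamma h_{k-1}-\Delta^\Gamma h_{k-1}+\ve_0\cdot\nabla^\Gamma h_{k-1}+g_0h_{k-1}\bigr)-\sigma g_{k-1}d_\Gamma = F_{k-1}-2\lambda_{k-1},
\]
where $F_{k-1}$ collects already known quantities. We read this equation in two ways.
\begin{itemize}
\item On $\Gamma$ ($d_\Gamma=0$), it is the linear parabolic equation \eqref{eq: time der of h} for $h_{k-1}$ on $\T^1$. This equation has smooth coefficients, is uniquely solvable, and gives a smooth $h_{k-1}$ with smooth initial data.
\item Off $\Gamma$, it defines $g_{k-1}$. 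Because the bracket minus the right-hand side vanishes on $\Gamma$, Lemma~\ref{lem: extension interface} shows that $g_{k-1}$ extends smoothly.
\end{itemize}
With the solvability condition satisfied, Lemma~\ref{lem: solv cond} yields a unique bounded $c_k$ with $c_k(0,x,t)=0$, smooth in all variables. Its limits at $\pm\infty$ are $h^\pm/f''(\pm1)$ (up to sign conventions).

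\textbf{Matching.} It remains to check that these limits coincide with $c_k^\pm$ from \eqref{eq: coeff outer eq}. This is the main point. We will compare the limit $h^\pm$ of the inner right-hand side with the outer equation at order $\eps^{k-2}$. All $\rho$-dependent geometric terms decay, so the inner equation at $\pm\infty$ reduces exactly to $f''(\pm1)c_k^\pm=-\partial_tc_{k-2}^\pm-f_{k-1}(\pm1,\dots)+\lambda_{k-1}$. Exponential convergence of all mixed derivatives then follows from the last assertion of Lemma~\ref{lem: solv cond}, with $\alpha$ reduced if necessary.

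\textbf{Main obstacle.} The step requiring the most care is the index bookkeeping. The outer order $\eps^{k-2}$ must line up with the inner order $\eps^k$, including the mass term $\lambda_{k-1}$ and the $c_1$, $\lambda_0$ contributions that are new compared to \cite{AbeFei23}. We must also confirm that $h_{k-1}$ enters only linearly through $B^{k-1}$ with coefficient $\sigma$. This linearity is what makes the solvability condition a well-posed, non-degenerate equation for $h_{k-1}$.
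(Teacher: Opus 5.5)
Your proposal is correct and follows essentially the same route as the paper: the solvability condition from Lemma~\ref{lem: solv cond} is read on $\Gamma$ as the evolution equation \eqref{eq: time der of h} for $h_{k-1}$ and off $\Gamma$ as the definition of $g_{k-1}$, made smooth by Lemma~\ref{lem: extension interface}. Matching then follows, as in the paper, by identifying the $\rho\to\pm\infty$ limit of the inner equation with the outer formula \eqref{eq: coeff outer eq}. One small slip: your inductive hypothesis should list only $h_1,\dots,h_{k-2}$, since $h_{k-1}$ is exactly what the order-$k$ solvability condition determines (and once $\lambda_{k-1}$ is no longer taken as given, its dependence on $h_{k-1}$ via Theorem~\ref{lem: coeff lambda} makes that equation nonlocal, as remarked after that theorem).
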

\begin{proof} \textit{Ad inner coefficients}.
  For the first order, $k=1$, the solvability condition is checked in the last section.
  By applying Lemma \ref{lem: solv cond} to \eqref{eq: coeff inner eq}, we can get $c_{k}$ for the other $k\geq 2$. The solvability condition for \eqref{eq: coeff inner eq} reads
  \begin{equation}
    \begin{aligned}
      \partial_{t}^{\Gamma} h_{k-1}
      +\mathbf{v}_0 \cdot \nabla^{\Gamma} h_{k-1}-\Delta^{\Gamma} h_{k-1}+\sigma^{-1}\int_\mathbb{R} \nabla d_\Gamma \cdot \mathbf{v}_{k-1}(\theta_0'(\rho)^2) \mathrm{d} \rho \\
      +g_{0} h_{k-1}-g_{k-1} d_{\Gamma}+2\sigma^{-1} \lambda_{k-1}=\sigma^{-1}\int_{\mathbb{R}} \tilde{B}^{k-1} \theta_{0}^{\prime}(\rho) \dif \rho,
    \end{aligned}\label{eq: time der of h}
  \end{equation}
  using $\sigma\coloneqq{\int_{\R} (\theta^\prime_{0}(\rho))^2 \dif {\rho}}$  and $\int_\R\theta_0''(\rho)\theta_0'(\rho)\dif \rho=0$. We apply Lemma \ref{lem: extension interface}. To this end we have on $\Gamma\left(\right.$since $\left.g_{k-1}d_\Gamma=0\right)$:
  $$
    \begin{aligned}
      \partial_{t}^{\Gamma} h_{k-1}+\mathbf{v}_0 \cdot \nabla^{\Gamma} h_{k-1}-\Delta^{\Gamma} h_{k-1}+\sigma^{-1}\int_\mathbb{R} \nabla d_\Gamma \cdot \mathbf{v}_{k-1}(\theta_0'(\rho)^2) \mathrm{d} \rho \\
      +g_{0} h_{k-1}+2\sigma^{-1} \lambda_{k-1}=\sigma^{-1} \int_{\mathbb{R}} \tilde{B}^{k-1} \theta_{0}^{\prime}(\rho) \dif \rho.
    \end{aligned}
  $$
  We take in $\Gamma(2 \delta) \backslash \Gamma$:
  \begin{equation}
    \begin{aligned}
      g_{k-1}= \frac{1}{d_\Gamma}\left[
      \partial_{t}^{\Gamma} h_{k-1}+\mathbf{v}_0 \cdot \nabla^{\Gamma} h_{k-1}-\Delta^{\Gamma} h_{k-1}+\sigma^{-1}\int_\mathbb{R} \nabla d_\Gamma \cdot \mathbf{v}_{k-1}(\theta_0'(\rho)^2) \mathrm{d} \rho \right. \\
      \left.   +g_{0} h_{k-1}-\Delta^{\Gamma} h_{k-1}-{\sigma}^{-1}\int_{\mathbb{R}} \theta_{0}^{\prime}(\rho) \tilde{B}^{k-1} \dif \rho+{2\sigma^{-1}} \lambda_{k-1}\right]
    \end{aligned}
  \end{equation}
  and on $\Gamma$:
  \begin{equation}
    \begin{aligned}
      g_{k-1}=\mathbf{n} \cdot \nabla \left[\partial_{t}^{\Gamma} h_{k-1}+\mathbf{v}_0 \cdot \nabla^{\Gamma} h_{k-1}-\Delta^{\Gamma} h_{k-1}+\sigma^{-1}\int_\mathbb{R} \nabla d_\Gamma \cdot \mathbf{v}_{k-1}(\theta_0'(\rho)^2) \mathrm{d} \rho \right. \\
      \left. +g_{0} h_{k-1}-\Delta^{\Gamma} h_{k-1}-{\sigma}^{-1} \int_{\mathbb{R}} \theta_{0}^{\prime}(\rho) \tilde{B}^{k-1} \dif \rho+2\sigma^{-1} \lambda_{k-1}\right]
    \end{aligned}
  \end{equation}
  such that the \eqref{eq: solv cond} in $\Gamma(2 \delta) \backslash \Gamma$ holds and $g_{k-1}$ is smooth because of Lemma \ref{lem: extension interface}.

  \medskip
  
\noindent
  \textit{Ad matching-conditions.} The identity $\partial_x^m \partial_\rho^n \partial_t^l\left[c_k( \pm \rho, x, t)-c_k^{ \pm}(t)\right]= {O}\left(e^{-\alpha \rho}\right)$ as $\rho \rightarrow \infty$ in $\mathrm{L}^\infty(\Gamma(2\delta))$ holds since $c_k^\pm$ is the limit of $\rho\to\infty$ of the right-hand side of \eqref{eq: coeff inner eq} divided by $f''(\pm 1)$, thus by construction this yields the exponential decay.
\end{proof}

\subsection{Calculation of Mass-Conserving Term $\lambda^\varepsilon$}\label{subsec: lambda}
The goal is to identify the coefficients of the power expansion in the mass-conserving term $\lambda_\mathrm{A}$, as outlined in the beginning of this section. The calculation of the coefficients is based upon the mass conservation of the approximate solution, i.e., $\int_{\Omega} \partial_t \bar{c}_\mathrm{A}(x, t) \dif x = 0$. By \cite[Section 5.4]{ChenHilhorstLogak}, we can conclude:

\begin{thm}\label{lem: coeff lambda} The coefficients of the asymptotic expansion $\hat{\lambda}_\mathrm{A}(t)=\sum_{i=0}^{N+1}\varepsilon^i \lambda_i(t)$ for $t\in[0,T_0]$ are
  \begin{equation} \label{eq: lambda i}
    \begin{aligned}
      \sigma^{-1} \lambda_{0}  & =\overline{H_{\Gamma_{t}}}- \overline{\no_{\Gamma_t}\cdot \ve},                                                                                                            \\
      2\sigma^{-1}\lambda_i(t) & =-\overline{V_{\Gamma_{t}} H_{\Gamma_{t}} h_{i}} -\overline{\mathbf{v}_0\cdot \nabla^{\Gamma} h_{i}}+ \Lambda_{i-1},\quad i \geqslant 1,
    \end{aligned}
  \end{equation}
  where $\Lambda_{i-1}(t)$ depends only on expansions of order less equal than $i-1$
  and $$\overline{\phi(\cdot)}\coloneqq \frac{1}{|\mathbb{T}^1|} \int_{\mathbb{T}^1} \phi(s)|X_0'(s)| \dif s=\frac{1}{|\Gamma_t \mid} \int_{\Gamma_t} \phi(x, t) \dif \mathcal{H}^{1} .$$
\end{thm}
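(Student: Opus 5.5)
The coefficients $\lambda_i$ are fixed by requiring that the mass of the ``integer order'' approximate solution $\tc_\mathrm{A}$ be conserved up to high order: $\int_\Omega \partial_t \tc_\mathrm{A}\,dx=O(\eps^{N+2})$. I would expand this integral in powers of $\eps$ and set each coefficient to zero, following \cite[Section 5.4]{ChenHilhorstLogak}.

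\emph{Splitting the integral.} Write $\tc_\mathrm{A}=\zg \tc_\mathrm{A}^{\mathrm{in}}+(1-\zg)(c_\mathrm{A}^+\chi_++c_\mathrm{A}^-\chi_-)$. By the matching conditions of Theorem~\ref{thm:Approx1}, $\partial_t$ falling on $\zg$ only produces exponentially small terms. The outer coefficients $c_k^\pm$ depend on $t$ alone by \eqref{eq: coeff outer eq}, so the outer contribution is $\sum_k\eps^k\big(\tfrac{d}{dt}c_k^+\,|\Omega^+(t)|+\tfrac{d}{dt}c_k^-\,|\Omega^-(t)|\big)$ plus transport of the jump. For the inner part I use Lemma~\ref{lem:ChainRule}:
\[
\partial_t\tc_\mathrm{A}^{\mathrm{in}}=-\big(\eps^{-1}V+\partial_t^\Gamma h_\eps\big)\partial_\rho\tilde c+\partial_t\tilde c .
\]
I then integrate in the coordinates $(r,s)$ with Jacobian $J=1+\kappa r+O(r^2)$, substitute $r=\eps(\rho+h_\eps)$, and Taylor-expand $J$ in $\eps$. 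The leading term $-\eps^{-1}V\theta_0'$ integrates against $\mathcal H^1$ to $-2\int_{\Gamma_t}V$. This is matched by the jump $c_0^+-c_0^-=2$ times $\tfrac{d}{dt}|\Omega^+|$, so the $O(1)$ balance is automatic.

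\emph{Order by order.} At order $\eps^{i}$ the new unknowns appear only in a few places:
\begin{itemize}
\item through $\tfrac{d}{dt}c_{i+1}^\pm$, which is $\lambda_i/f''(\pm1)$-dependent by \eqref{eq: coeff outer eq}, together with the profile parts $\lambda_i\theta_1$-type terms in $c_{i+1}$ (using \eqref{eq: 1st order c} and \eqref{eq: coeff inner eq}), which after integration give $\lambda_i$ times $\int(1-\tfrac{2}{\sigma}\theta_0')$-type constants;
\item through $h_i$, via $-\partial_t^\Gamma h_i\,\theta_0'$ and via the first Jacobian correction $-V\kappa\,\eps(\rho+h_i)\theta_0'$.
\end{itemize}
Integrating over $\rho$, the $\rho\theta_0'$ part vanishes by parity. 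This leaves $\int_{\Gamma_t}(-\partial_t^\Gamma h_i - V H h_i)$ with $\kappa=-H$ up to sign.

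Next I substitute the evolution equation \eqref{eq: time der of h} for $\partial_t^\Gamma h_i$ on $\Gamma$, in which $g_{i}d_\Gamma=0$. The term $\int_{\Gamma_t}\Delta_\Gamma h_i$ vanishes on the closed curve. What remains is an identity of the form
\[
2\sigma^{-1}\lambda_i=-\overline{VHh_i}-\overline{\ve_0\cdot\nabla^\Gamma h_i}+\Lambda_{i-1},
\]
where $\Lambda_{i-1}$ collects everything built from $c_j,h_j,\ve_j,\lambda_j$ with $j<i$, including $\tilde B^{i-1}$ and the $\ve_{i}$-normal integral, which is again lower order in the induction. For $i=0$ the same computation at order $\eps^0$ combined with the $\eps^1$ solvability condition $\sigma(\partial_td_\Gamma+\ve_0\cdot\nabla d_\Gamma-\Delta d_\Gamma)=2\lambda_0$ on $\Gamma$ gives \eqref{eq:Limit5}. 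Averaging over $\Gamma_t$ and using $\int_{\Gamma_t}V=0$, which is volume preservation of the $O(1)$ interface, yields $\sigma^{-1}\lambda_0=\overline{H}-\overline{\no\cdot\ve}$, up to the paper's normalisation.

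\emph{Main obstacle.} The delicate part is the bookkeeping that shows $\lambda_i$ enters the order-$\eps^{i}$ mass balance with a \emph{nonzero} coefficient, so that the equation can be solved for $\lambda_i$. Outer contributions $|\Omega^\pm|/f''(\pm1)$ must combine with inner contributions $\int(\theta_1-\text{limits})$, and the nonlocal $\tfrac{2}{\sigma}\lambda_i$ from \eqref{eq: time der of h} must combine with them as well. I would check that after using \eqref{eq: time der of h} the net coefficient reduces to $2\sigma^{-1}|\Gamma_t|$ plus terms absorbed into lower order via the induction. Two points I would verify carefully along the way are that $h_i$ is coupled to $\lambda_i$ only through \eqref{eq: time der of h}, making the system triangular, and that \eqref{eq:meantcEstim} is then obtained with all orders up to $N+1$ cancelled.
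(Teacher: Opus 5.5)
\paragraph{Assessment.} Your overall strategy is the one the paper takes over from \cite[Section 5.4]{ChenHilhorstLogak}: expand $\int_\Omega\partial_t\tc_\mathrm{A}\,dx$ in powers of $\eps$, eliminate $D_th_i$ with \eqref{eq: time der of h}, and average over $\Gamma_t$. However, the bookkeeping you call the main obstacle is wrong, and carried out as you describe it would not yield an algebraic formula for $\lambda_i$.

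\paragraph{Where $\lambda_i$ enters.} In $\int_\Omega\partial_t\tc_\mathrm{A}\,dx$ the coefficients $c_k^\pm$ and $c_k$ first appear at order $\eps^k$:
\begin{itemize}
\item the outer ones through $\tfrac{d}{dt}c_k^\pm\,|\Omega^\pm(t)|$, together with the non-decaying part of the inner $\partial_tc_k$;
\item the inner term $-\eps^{-1}V\partial_\rho c_k$ through integration, since it gains a factor $\eps$ from $dr=\eps\,d\rho$.
\end{itemize}
So neither $c_{i+1}^\pm$ nor the $\lambda_i\theta_1$-type part of $c_{i+1}$ occurs at order $\eps^i$. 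When they do occur, at order $\eps^{i+1}$, the time derivative turns them into terms like $\dot\lambda_i\bigl(|\Omega^+(t)|/f''(1)+|\Omega^-(t)|/f''(-1)\bigr)$, not $\lambda_i$ times a constant. Putting them at order $\eps^i$ would give either a wrong coefficient or an ODE for $\lambda_i$.

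The correct accounting at order $\eps^i$ is as follows.
\begin{itemize}
\item The new unknown $h_i$ enters through $-\partial_t^\Gamma h_\eps\,\theta_0'$ and through the Jacobian correction $J=1+\kappa r+O(r^2)$, $\kappa=\Delta d_\Gamma|_\Gamma=-H$, applied to $-\eps^{-1}V\theta_0'$. Together these give $-2\int_{\Gamma_t}(D_th_i-VHh_i)\,d\mathcal H^1=-2\tfrac{d}{dt}\int_{\Gamma_t}h_i\,d\mathcal H^1$.
\item The contributions of $c_i$ and $c_i^\pm$ involve only $\lambda_{i-1}$ and $\dot\lambda_{i-1}$. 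They belong to $\Lambda_{i-1}$, which therefore also contains time derivatives of lower-order quantities.
\end{itemize}
Hence $\lambda_i$ itself enters only through $D_th_i$ via \eqref{eq: time der of h}, with coefficient $4\sigma^{-1}|\Gamma_t|\neq0$. Nothing has to ``combine''; at each order $(h_i,\lambda_i)$ solve a closed nonlocal problem with lower-order data.

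\paragraph{The $O(1)$ step.} Your argument here double counts. With the cut-off decomposition \eqref{eq:DefcA}, the outer part contributes $\int_\Omega(1-\zg)\bigl(\partial_tc_0^+\chi_++\partial_tc_0^-\chi_-\bigr)dx=0$. Moreover, $-2\int_{\Gamma_t}V\,d\mathcal H^1$ and $2\tfrac{d}{dt}|\Omega^+(t)|$ are the same quantity, not two terms that cancel. The $O(1)$ balance is therefore the genuine condition $\int_{\Gamma_t}V\,d\mathcal H^1=0$. It holds because the given $\Gamma$ satisfies \eqref{eq:Limit5} and $\int_{\Gamma_t}\no\cdot\ve_0\,d\mathcal H^1=0$ by incompressibility.

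\paragraph{The resulting formulas.} $\lambda_0$ then follows from the $\eps^1$ solvability relation $\sigma(-V+\no\cdot\ve_0+H)=2\lambda_0$ on $\Gamma$. With the normalisation of \eqref{eq: 1st order c} and \eqref{eq:g0}, averaging gives $2\sigma^{-1}\lambda_0=\overline{H}+\overline{\no\cdot\ve_0}=\overline{H}$. You should state this factor $2$ explicitly rather than absorb it into ``normalisation''.

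Similarly, for $i\ge1$, substituting \eqref{eq: time der of h} also produces $-\overline{g_0h_i}$ from the term $g_0h_{k-1}$ there, and $g_0$ does not vanish on $\Gamma$ in general. Since this term involves $h_i$, it cannot be hidden in $\Lambda_{i-1}$; compare the formula for $\lambda_{N+\frac12}$ in the remark after Theorem~\ref{thm: h_N+1/2}. So your claim that exactly $-\overline{VHh_i}-\overline{\ve_0\cdot\nabla^\Gamma h_i}+\Lambda_{i-1}$ remains does not follow from the computation you describe.
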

\begin{proof}
  Done as in \cite[Section 5.4]{ChenHilhorstLogak} with additional convection terms. We note that we use the opposite sign convention for $f$ and $\lambda_\eps$, $\lambda_i$.
\end{proof}

\begin{rem}
By entering the identity of the mass coefficients \eqref{eq: lambda i} in \eqref{eq: time der of h}, the $\lambda_i$ can be replaced. Additionally, \eqref{eq: time der of h} depends on some mean values of coefficients of height functions, other than the higher order terms in (A.71) from \cite{AbeFei23}.
\end{rem}
\iffalse The proof is based on the following key ingredients: firstly, the mass conservation condition over $\Omega$ must be used; secondly, the integral must be decomposed into two subdomains separated by the zero level set of $c_\varepsilon$. A suitable change of variables must then be employed, the inner and outer expansions must be plugged in, and the terms must be matched in order to determine the $\lambda_i$.
\fi

\noindent
\begin{proof*}{of Theorem \ref{thm:Approx1}} By construction, one can verify the claimed estimates of Theorem \ref{thm:Approx1} analogously as, e.g., in \cite[Section 4]{AbelsMarquardt2}. In our case, the estimates are even simpler since there is no $\varepsilon^{M-\frac{1}{2}}$-order term as in \cite{AbelsMarquardt2}.    
\end{proof*}

\def\ocirc#1{\ifmmode\setbox0=\hbox{$#1$}\dimen0=\ht0 \advance\dimen0
    by1pt\rlap{\hbox to\wd0{\hss\raise\dimen0
        \hbox{\hskip.2em$\scriptscriptstyle\circ$}\hss}}#1\else {\accent"17 #1}\fi}

\bigskip

\noindent
{\it
  (H. Abels) Fakult\"at f\"ur Mathematik,
  Universit\"at Regensburg,
  93040 Regensburg,
  Germany}\\
{\it E-mail ad\dif{r}ess: {\sf helmut.abels@mathematik.uni-regensburg.de} }\\[1ex]
{\it
(H. Mumtaz) Fakult\"at f\"ur Mathematik,
Universit\"at Regensburg,
93040 Regensburg,
Germany}\\
{\it E-mail ad\dif{r}ess: {\sf hanifah.mumtaz@mathematik.uni-regensburg.de} }\\[1ex]

\begin{thebibliography}{10}

  \bibitem{AbeFei23}
  H.~Abels and M.~Fei,
  \newblock Sharp interface limit for a {N}avier--{S}tokes/{A}llen--{C}ahn system with different viscosities,
  \newblock {\em SIAM J. Math. Anal.} \textbf{55} (2023), no.~4, 4039--4088.

  \bibitem{AbelsConvectiveAC}
  H.~Abels.
  \newblock ({N}on-)convergence of solutions of the convective {A}llen-{C}ahn equation.
  \newblock {\em Partial Differ. Equ. Appl.} 3(1), 2022.

  \bibitem{AbelsMarquardt1}
H.~Abels and A.~Marquardt.
\newblock Sharp interface limit of a {S}tokes/{C}ahn-{H}illiard system, {P}art
  {I}: {C}onvergence result.
\newblock {\em Interfaces Free Bound.} 23(3):353--402, 2021.

  \bibitem{AbelsMarquardt2}
  H.~Abels and A.~Marquardt.
  \newblock Sharp interface limit of a {S}tokes/{C}ahn-{H}illiard system, {P}art {II}: {A}pproximate solutions.
  \newblock {\em J. Math. Fluid Mech.} 23(2):Paper No. 38, 48, 2021.

  \bibitem{AbelsMoserNSAC}
  H.~Abels and M.~Moser.
  \newblock Well-posedness of a {N}avier-{S}tokes/mean curvature flow system.
  \newblock In {\em Mathematical analysis in fluid mechanics---selected recent results}, volume 710 of {\em Contemp. Math.}, pages 1--23. Amer. Math. Soc.,
  [Providence], RI, [2018] \copyright 2018.

  \bibitem{StokesAllenCahn}
  H.~Abels and Y.~Liu.
  \newblock Sharp interface limit for a {S}tokes/{A}llen-{C}ahn system.
  \newblock {\em Arch. Ration. Mech. Anal.} 229(1):417--502, 2018.

  \bibitem{AlikakosLimitCH}
  N.~D. Alikakos, P.~W. Bates, an\dif{x}.~Chen.
  \newblock Convergence of the {C}ahn-{H}illiard equation to the {H}ele-{S}haw model.
  \newblock {\em Arch. Rational Mech. Anal.} 128(2):165--205, 1994.

  \bibitem{Allen1977}
  J.~Cahn and S.~M.~Allen,
  \newblock A microscopic theory for domain wall motion and its experimental verification in Fe-Al alloy domain growth kinetics,
  \newblock {\em J. Phys. Colloques} 38:C7-51--C7-54, 1977.
  \newblock \doi{10.1051/jphyscol:1977709}.

  \bibitem{Allen19791085}
  S.~M.~Allen and J.~W.~Cahn,
  \newblock A microscopic theory for antiphase boundary motion and its application to antiphase domain coarsening,
  \newblock {\em Acta Metallurgica} 27(6):1085--1095, 1979.
  \newblock \doi{10.1016/0001-6160(79)90196-2}.

  \bibitem{Analyticity}
  Y.~Giga.
  \newblock Analyticity of the semigroup generated by the Stokes operator in {$L_{r}$} spaces.
  \newblock {\em Math. Z.} 178:297--329, 1981.

  \bibitem{BoyerModelH}
  F.~Boyer.
  \newblock Mathematical study of multi-phase flow under shear through order parameter formulation.
  \newblock {\em Asymptot. Anal.} 20(2):175--212, 1999.

  \bibitem{CahnHilliard}
  J.~W. Cahn and J.~E. Hilliard.
  \newblock Free energy of a nonuniform system. {I}. {I}nterfacial energy.
  \newblock {\em J. Chem. Phys.} 28, No. 2:258--267, 1958.

  \bibitem{ChenHilhorstLogak}
  X.~Chen, D.~Hilhorst, and E.~Logak.
  \newblock Mass conserving {A}llen-{C}ahn equation and volume preserving mean curvature flow.
  \newblock {\em Interfaces Free Bound.} 12(4):527--549, 2010.

  \bibitem{DeMottoniSchatzman}
  P.~De~Mottoni and M.~Schatzman.
  \newblock Geometrical evolution of devoloped interfaces.
  \newblock {\em Trans. Amer. Math. Soc.} 347(5):1533--1589, 1995.

  \bibitem{ESS}
  L.~C.~Evans, H.~M.~Soner, and P.~E.~Souganidis
  \newblock Phase transitions and generalized motion by mean curvature.
  \newblock {\em Comm.~Pure Appl.~Math.} 45:1097--1123, 1992.

  \bibitem{Fei}
  M.~Fei.
  \newblock Global sharp interface limit of the {H}ele-{S}haw-{C}ahn-{H}illiard system.
  \newblock {\em Math. Methods Appl. Sci.} 40(3):833--852, 2017.

  \bibitem{FischerLauxSimon}
  J.~Fischer, T.~Laux, T.~M.~Simon
  \newblock Convergence rates of the Allen-Cahn equation to mean curvature flow: A short proof based on relative entropies.
  \newblock {\em SIAM J. Math. Anal.} 52(6):6222--6233, 2020.

  \bibitem{GalGrasselliDCDS}
  C.~G. Gal and M.~Grasselli.
  \newblock Longtime behavior for a model of homogeneous incompressible two-phase flows.
  \newblock {\em Discrete Contin. Dyn. Syst.} 28(1):1--39, 2010.

  \bibitem{GiorginiGrasselliWu}
  A.~Giorgini, M.~Grasselli, and H.~Wu.
  \newblock Diffuse interface models for incompressible binary fluids and the mass-conserving {A}llen-{C}ahn approximation.
  \newblock {\em J. Funct. Anal.} 283(9):109631, 2022.

  \bibitem{GurtinTwoPhase}
  M.~E. Gurtin, D.~Polignone, and J.~Vi{\~n}als.
  \newblock Two-phase binary fluids and immiscible fluids described by an order parameter.
  \newblock {\em Math. Models Methods Appl. Sci.} 6(6):815--831, 1996.

  \bibitem{HenselLiuModelH}
  S.~Hensel, Y.~Liu.
  \newblock The sharp interface limit of a Navier--Stokes/Allen-Cahn system with constant mobility: Convergence rates by a relative energy approach.
  \newblock {\em SIAM J. Math. Anal.} 55(5):4751--4787, 2023.

  \bibitem{HohenbergHalperin}
  P.~Hohenberg and B.~Halperin.
  \newblock Theory of dynamic critical phenomena.
  \newblock {\em Rev. Mod. Phys.} 49:435--479, 1977.

  \bibitem{Ilmanen}
  T.~Ilmanen
  \newblock Convergence of the Allen-Cahn equation to Brakke's motion by mean curvature.
  \newblock {\em J.~Differential Geom.} 38:417--461, 1993.

  \bibitem{TwoPhaseVariableDensityJiangEtAl}
  J.~Jiang, Y.~Li, and C.~Liu.
  \newblock Two-phase incompressible flows with variable density: an energetic variational approach.
  \newblock {\em Discrete Contin. Dyn. Syst.} 37(6):3243--3284, 2017.

  \bibitem{PreprintRemarks}
  S.~Jiang, X.~Su, F.~Xie.
  \newblock Remarks on Sharp Interface Limit for an Incompressible Navier-Stokes and Allen-Cahn Coupled System.
  \newblock {\em Chin. Ann. Math. Ser. B.} 44:663--686, 2023.

  \bibitem{Kagaya}
  T.~Kagaya
  \newblock Convergence of the Allen-Cahn equation with a zero Neumann boundary condition on non-convex domains.
  \newblock {\em Math.~Ann.} 373:1485--1528, 2019.

  \bibitem{KKR}
  M.~Katsoulakis, G.~T.~Kossioris, F.~Reitich
  \newblock Generalized Motion by Mean Curvature with Neumann Conditions and the Allen-Cahn Model for Phase Transitions.
  \newblock {\em The Journal of Geometric Analysis} 5(2):255--279, 1995.

  \bibitem{KroemerLaux}
    M.~Kroemer and T.~Laux  
    \newblock Quantitative convergence of the nonlocal Allen-Cahn equation to volume-preserving mean curvature flow. 
    \newblock {\em Math. Ann.} 391(3):4455--4472, 2025.


\bibitem{LauxSimon}
  T.~Laux, T.~M.~Simon
  \newblock Convergence of the Allen-Cahn Equation to Multiphase Mean Curvature Flow.
  \newblock {\em Comm.~Pure Appl.~Math.} 71(8):1493--1714, 2018.

  \bibitem{LeeLowengrub1}
  H.-G. Lee, J.~S. Lowengrub, and J.~Goodman.
  \newblock Modeling pinchoff and reconnection in a {H}ele-{S}haw cell. {I}.
  {T}he models and their calibration.
  \newblock {\em Phys. Fluids} 14(2):492--513, 2002.

  \bibitem{LiuSatoTonegawa2}
  C.~Liu, N.~Sato, and Y.~Tonegawa.
  \newblock Two-phase flow problem coupled with mean curvature flow.
  \newblock {\em Interfaces Free Bound.} 14(2):185--203, 2012.

  \bibitem{LiuShenModelH}
  C.~Liu and J.~Shen.
  \newblock A phase field model for the mixture of two incompressible fluids and its approximation by a {F}ourier-spectral method.
  \newblock {\em Phys. D} 179(3-4):211--228, 2003.

  \bibitem{PhDMarquardt}
  A.~Marquardt.
  \newblock {\em Sharp Interface Limit for a Stokes / Cahn-Hilliard System}.
  \newblock PhD thesis, University Regensburg, urn:nbn:de:bvb:355-epub-384308, 2019.

  \bibitem{MizunoTonegawa}
  M.~Mizuno, Y.~Tonegawa
  \newblock Convergence of the Allen-Cahn equation with Neumann boundary conditions.
  \newblock {\em SIAM J.~Math.~Anal.} 47(3):1906--1932, 2015.

  \bibitem{PromotionStefan}
  S.~Schaubeck.
  \newblock {\em Sharp interface limits for diffuse interface models}.
  \newblock PhD thesis, University Regensburg, urn:nbn:de:bvb:355-epub-294622, 2014.

  \bibitem{SchumacherInstNSt}
  K.~Schumacher.
  \newblock The instationary {N}avier-{S}tokes equations in weighted {B}essel-potential spaces.
  \newblock {\em J. Math. Fluid Mech.}, 11(4):552--571, 2009.

  \bibitem{Stein:SingInt}
  E.~M. Stein.
  \newblock {\em Singular Integrals and Differentiability Properties of Functions}.
  \newblock Princeton Hall Press, Princeton, New Jersey, 1970.

  \bibitem{WangZhangHSCH}
  X.~Wang and Z.~Zhang.
  \newblock Well-posedness of the {H}ele-{S}haw-{C}ahn-{H}illiard system.
  \newblock {\em Ann. Inst. H. Poincar\'e Anal. Non Lin\'eaire}, 30(3):367--384, 2013.

\end{thebibliography}
\end{document}